\documentclass[11pt]{amsart}

\usepackage[all]{xy}
\usepackage{mathrsfs}
\usepackage{amsmath,amsthm, amssymb, amscd, amsgen,amsxtra,amsfonts, amsbsy, stmaryrd}
\usepackage[all]{xy}
\usepackage{longtable}
\usepackage{verbatim}

\usepackage{color}
\definecolor{shadecolor}{gray}{0.875}
\definecolor{dblue}{rgb}{0,0,.6}
\usepackage[colorlinks=true, linkcolor=dblue, citecolor=dblue, filecolor = dblue, menucolor = dblue, urlcolor = dblue]{hyperref}

\usepackage{etex}
\usepackage{calligra,mathrsfs}
\usepackage{mathtools}
\usepackage{extarrows}
\usepackage{ifthen}

\usepackage{graphicx}
\usepackage{caption,rotating}
\usepackage{color}
\usepackage{subfigure}
\usepackage{wasysym}

\usepackage{times}
\newcommand{\mathds}[1]{{\mathbb #1}}

\theoremstyle{definition}
\newtheorem{Definition}{Definition}[section]

\newtheorem{Remark}[Definition]{Remark}

\theoremstyle{plain}
\newtheorem{Theorem}[Definition]{Theorem}

\newtheorem{Proposition}[Definition]{Proposition}

\newtheorem{Lemma}[Definition]{Lemma}
\newtheorem{Corollary}[Definition]{Corollary}
\newtheorem*{Corollaryx}{Corollary}

\newtheoremstyle{voiditstyle}{3pt}{3pt}{\itshape}{\parindent}%
{\bfseries}{.}{ }{\thmnote{#3}}%
\theoremstyle{voiditstyle}

\newtheoremstyle{voidromstyle}{3pt}{3pt}{\rm}{\parindent}%
{\bfseries}{.}{ }{\thmnote{#3}}%
\theoremstyle{voidromstyle}

%

\newcommand{\cal}{\mathcal}
\newcommand{\Aff}{{\mathds{A}}}

\newcommand{\CC}{{\mathds{C}}}

\newcommand{\FF}{{\mathds{F}}}
\newcommand{\FFcl}{{\overline{\mathds{F}}}}
\newcommand{\GG}{{\mathds{G}}}
\newcommand{\HH}{{\mathds{H}}}

\newcommand{\ZZ}{{\mathds{Z}}}
\newcommand{\PP}{{\mathds{P}}}
\newcommand{\QQ}{{\mathds{Q}}}

%


%


%

%
%
\newcommand{\OO}{{\cal O}}

\newcommand{\Spec}{{\rm Spec}\:}

\newcommand{\Pic}{{\rm Pic }}

\newcommand{\Jac}{{\rm Jac}}
\newcommand{\Aut}{{\rm Aut}}

\newcommand{\EXT}{\mathop{\mathscr{E}{\kern -2pt {xt}}}\nolimits}
\newcommand{\Hom}{\mathop{\mathrm{Hom}}\nolimits}
\newcommand{\HOM}{\mathop{\mathscr{H}{\kern -3pt {om}}}\nolimits}

\newcommand{\calC}{\mathscr{C}}

\newcommand{\et}{{\rm \acute{e}t}}

\renewcommand{\HH}{{\rm{H}}}
\newcommand{\Frob}{\mathrm{F}}
\newcommand{\Qell}{\mathbb{Q}_{\ell}}
\newcommand{\DPgm}{\mathcal{D}}
\newcommand{\DP}{\mathcal{D}}
\newcommand{\genpos}[1]{\mathcal{P}_{#1}}
\newcommand{\PGL}{\mathrm{PGL}}

\title{Cohomology of moduli spaces of Del Pezzo surfaces}
\author{Olof Bergvall}%
\address{Department of Electronics, Mathematics and Natural Sciences, University of G\"avle, 801 76 G\"avle, Sweden}
\email{olof.bergvall@hig.se}

\author{Frank Gounelas}%
\address{Georg-August-Universit\"at G\"ottingen, Fakult\"at f\"ur Mathematik und Informatik, Bunsenstr. 3-5, 37073 G\"ottingen, Germany}
\email{gounelas@mathematik.uni-goettingen.de}

\subjclass[2010]
{
14J10, 
14J26, 
(05E18, 
14F40, 
14F20)
}
\keywords{Del Pezzo surface, cohomology of moduli, cubic surface, hyperplane arrangements}
\date{\today}

\begin{document}
\begin{abstract}
We compute the rational Betti cohomology groups of the coarse moduli spaces of geometrically marked Del Pezzo surfaces
of degree three and four as representations of the Weyl groups of the corresponding root systems.  The proof uses a
blend of methods from point counting over finite fields and techniques from arrangement complements.
\end{abstract}
\maketitle

\section{Introduction}

The geometry of the 27 lines on a smooth complex cubic surface is one of the oldest 
and most studied problems in algebraic geometry. 
For a fixed cubic surface $X\subset\PP^3$, the data of 6 disjoint lines along with the class of a
hyperplane determine what is called a geometric marking, 
i.e., a special type of basis of the Picard group $\Pic X \cong\ZZ^7$ 
(see Section \ref{section:geom background} for details). 
The isomorphism classes of such pairs of a cubic
surface along with a geometric marking form a 4-dimensional coarse moduli space $\DP_3$, 
and forgetting the marking gives a morphism 
\[
\DP_3\to\calC_3 := |\OO_{\PP^3}(3)|_{\mathrm{sm}}\sslash\PGL_4
\]
to the moduli space of cubic surfaces. 
Since the Weyl group of the lattice $E_6$ is the automorphism group of the set of lines, 
this morphism is in fact a quotient by $W(E_6)$. 
The action of the group $W(E_6)$ enriches the study of cubic surfaces 
via the study of root systems and the representation theory of Coxeter groups.

The dimensions of the rational cohomology groups $\HH^i(\DP_3,\QQ)$ have been known for some time
(at least they are easily computed via an observation of Arnold, 
see Lemma~\ref{poincarepoly} for details).
On the other hand, very little is known about the problem of describing the cohomology groups
$\HH^i(\DP_3)$ as representations of $W(E_6)$.
As the order of $W(E_6)$ is 51840 and the number of irreducible representations is 25, 
determining the decomposition of $\HH^i(\DP_3)$ into irreducible representations 
is a highly non-trivial computational task.

\subsection{The main result}

The aim of this paper is to completely determine the action of $W(E_6)$ 
on the cohomology groups $\HH^i(\DP_3,\QQ)$.  In the statement that follows, 
we use the symbol $\phi^e_d$ to denote an irreducible representation of $W(E_6)$ of
dimension $d$ such that $e$ is the smallest symmetric power of the standard representation
of $W(E_6)$ of which $\phi^e_d$ is a direct summand. The numbers $(d,e)$ uniquely determine
the 25 irreducible representations of $W(E_6)$ (see \cite[p411]{carter} for definitions and \cite[p428-429]{carter} for further details).

\begin{Theorem}\label{theo:cohdp3}
The cohomology $\HH^i(\DP_3,\QQ)$ of the moduli space of complex, geometrically marked degree 3 Del Pezzo surfaces as a
representation of the Weyl group $W(E_6)$ is as follows
\begin{equation*}
\begin{array}{ll}
        \HH^0 =& \phi_1^0 \\
        \HH^1 =& \phi_{15}^4 \\
        \HH^2 =& \phi_{81}^6 \\
        \HH^3 =& \phi_{15}^5+\phi_{80}^7+\phi_{90}^8 \\
        \HH^4 =& \phi_{10}^9 + \phi_{80}^7 + \phi_{30}^3 + \phi_{30}^{15}.
\end{array}
\end{equation*}
\end{Theorem}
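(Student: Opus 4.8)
The plan is to upgrade the bare point count of Lemma \ref{poincarepoly} to a $W(E_6)$-equivariant one. The $W(E_6)$-action on $\genpos{6}$ (coming from the change of marking, i.e. the Cremona/association action) commutes with the geometric Frobenius, and, as recalled in the proof of Lemma \ref{poincarepoly}, the cohomology is of Tate type with Frobenius acting on $\HH^i_{\et,c}$ by $q^{i-4}$. The equivariant refinement of this purity statement (Dimca--Lehrer) says the $W(E_6)$-action respects the weight filtration, so the twisted Lefschetz trace formula splits class by class. Concretely, for every $w\in W(E_6)$ I expect
\begin{equation*}
\left|\genpos{6,\overline{\FF}_q}^{\,w\Frob}\right| \;=\; \sum_{i=0}^{4} (-1)^i\,\mathrm{Tr}\!\left(w \mid \HH^i(\genpos{6})\right) q^{\,4-i},
\end{equation*}
where I use that the characters of $W(E_6)$ are real-valued (indeed rational), so $\mathrm{Tr}(w\mid V)=\mathrm{Tr}(w^{-1}\mid V)$. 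The crucial feature is that purity forces distinct cohomological degrees into distinct powers of $q$: the coefficient of $q^{4-i}$ is exactly $(-1)^i\,\mathrm{Tr}(w\mid \HH^i)$, so one polynomial-valued count per conjugacy class determines all five character values $w\mapsto \mathrm{Tr}(w\mid \HH^i)$ at once. There are $25$ conjugacy classes, matching the $25$ irreducibles $\phi^e_d$.

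The substance is then to evaluate $\left|\genpos{6,\overline{\FF}_q}^{w\Frob}\right|$ as a polynomial in $q$ for each class. I read a $w\Frob$-fixed point as a sextuple of points of $\PP^2$ in general position whose $\PGL_3$-class is returned to itself after applying Frobenius followed by the marking change $w$. For the classes lying in $S_6\subset W(E_6)$, which permute the six points directly, this is a configuration count in which Frobenius distributes the points into Galois orbits according to the cycle type of $w$; here the general-position locus is the complement of the explicit line-and-conic arrangement of Lemma \ref{poincarepoly}, and the count is assembled by inclusion--exclusion over the flats of that arrangement (the same Orlik--Solomon combinatorics, now tracking orbit structure). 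For the remaining classes, whose action genuinely involves Cremona transformations, I would pass through $\genpos{6}\cong\DP_3$ and count smooth cubic surfaces over $\FF_q$ by the conjugacy class through which Frobenius permutes the $27$ lines, the twisted count being $|C_{W(E_6)}(w)|$ times that number once surfaces with extra automorphisms are corrected for.

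With the $25$ polynomials in hand, the final step is representation-theoretic: each class function $w\mapsto \mathrm{Tr}(w\mid \HH^i)$ is a genuine character of $W(E_6)$, which I decompose into the $\phi^e_d$ by taking inner products against the character table in \cite{carter}. The output must be effective (nonnegative multiplicities) and of total dimension equal to the Betti number from Lemma \ref{poincarepoly}; together with the normalisation $\HH^0=\phi^0_1$, these give strong internal checks and should isolate the decomposition asserted in Theorem \ref{theo:cohdp3}.

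The main obstacle is the \emph{exact} evaluation of the twisted counts for the classes not realised by a permutation of the six points, i.e. those essentially involving the Cremona action. For these, neither the naive configuration count nor the passage to cubic surfaces is automatic: one must describe the twisted-Frobenius fixed locus explicitly and, crucially, account exactly -- not merely to leading order -- for the lower-dimensional loci of configurations (resp. surfaces) with nontrivial automorphisms, since these feed the lower-degree coefficients and hence $\mathrm{Tr}(w\mid \HH^3)$ and $\mathrm{Tr}(w\mid \HH^4)$. Keeping every such correction exact across all classes, and checking the resulting class functions against the orthogonality relations, is where I expect the real difficulty to lie.
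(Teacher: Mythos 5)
Your framework is sound as far as it goes: minimal purity of $\genpos{6}$ (which the paper establishes in Corollary \ref{dp3puritycor}) does force the twisted Lefschetz formula to separate cohomological degrees by powers of $q$, and the paper uses exactly this mechanism in Section \ref{sectionpointcounts} — but only for the $11$ conjugacy classes of $S_6\subset W(E_6)$, where the fixed points of $w\Frob$ really are configurations of six points permuted by Frobenius according to the cycle type of $w$, and the count is an explicit inclusion–exclusion. The genuine gap is in the remaining $14$ classes. For those you offer no method beyond the remark that one should ``count smooth cubic surfaces over $\FF_q$ by the conjugacy class through which Frobenius permutes the $27$ lines.'' That is not a reduction to something easier: by the very untwisting argument you invoke, knowing those counts for all classes is \emph{equivalent} to knowing the $W(E_6)$-equivariant cohomology of $\DP_3$, i.e.\ to the theorem itself. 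There is no line-and-conic arrangement whose flats you can sum over for a Cremona-twisted Frobenius, and the coarse-space corrections for surfaces with extra automorphisms (the Eckardt locus, etc.) that you flag as ``the main obstacle'' are only one of the difficulties — the leading-order count is already out of reach by these means. So the proposal identifies correctly where the difficulty lies but does not cross it.

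The paper takes a different route precisely to avoid this. After the $S_6$-equivariant count pins down $\HH^i(\DP_3)$ as an $S_6$-representation (Table \ref{dp3s6table}), the $W(E_6)$-structure is constrained by exhibiting $W(E_6)$-equivariant injections of mixed Hodge structures $\HH^i(\DP_3)\hookrightarrow \HH^i(\DP_3^{*})$ into the cohomology of covers parametrising singular anticanonical sections with a marked singular point (Lemma \ref{comparelemma}, resting on Wells' theorem and purity). These covers are complements of toric and hyperplane arrangements (Theorem \ref{theo:loo}), whose equivariant cohomology is computable by Macmeikan's formula with computer assistance. Even then two candidates remain for $\HH^3$ and eight for $\HH^4$; these are eliminated by exactly the positivity check you propose as a ``strong internal check'' — candidate decompositions forcing negative point counts at $q=2,3,5$ are discarded — together with multiplicativity of the equivariant Euler characteristic along the fibration $\DP_3\to\DP_4$ and its refinement using the real structure. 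In short: your consistency checks appear in the paper not as sanity checks but as the decisive final step, while the computation you leave open is replaced entirely by the arrangement-cover technique.
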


To approach this problem we first employ two well-known techniques.
Firstly, in Section~\ref{sectionpointcounts}, we use point counts and Lefschetz formulas in positive
characteristic to study the action of a large subgroup of $W(E_6)$.
Secondly, in Section~\ref{section:arrangements}, we use the relation first studied by Looijenga) 
between the moduli spaces of rational surfaces with anticanonical sections and complements
of arrangements of hyperplanes and hypertori.
This allows us to make progress on the problem by computing the cohomology groups of these
arrangement complements, something which can be achieved using a 
Theorem of Macmeikan \ref{macmeikan} and significant computer algebra 
computations using algorithms developed by the first author in \cite{bergvalltor}.
To finish, we sieve out the cohomology groups $\HH^i(\DP_3,\QQ)$ by embedding them
into the cohomology of various other moduli spaces with $W(E_6)$-action.
Since the proof is somewhat technical, we provide a more detailed sketch in Section~\ref{proofsketch}.

\subsection{Some applications}
\label{applicationssubsec}
Before giving further details about the method of proof, we give some corollaries which 
exhibit the strength of the decomposition given in Theorem~\ref{theo:cohdp3}. 
First, by taking the invariant part, we see that the moduli space of unmarked Del Pezzo surfaces
has the rational cohomology of a point, recovering  the following theorem of Vasiliev. 
In combination with a Leray--Hirsch-type result of \cite{peterssteenbrink}, 
this also computes $\HH^i(\calC_3,\QQ)$.

\begin{Corollaryx}(Vasiliev, \cite[Theorem 4]{vasiliev})
The Poincar\'e polynomial of $C_3=|\OO_{\PP^3}(3)|_{\mathrm{sm}}$ is 
\[
P_{C_3}(t) := \sum_i \left(\dim \HH^i(C_3,\QQ)\right)t^i = (t^3+1)(t^5+1)(t^7+1).
\]
Since also $P_{\rm{PGL}_4(\CC)}(t)=(t^3+1)(t^5+1)(t^7+1)$, this gives that $\HH^i(\calC_3,\QQ)=0$ for $i>0$.
\end{Corollaryx}

By instead taking invariants with respect to subgroups of $W(E_6)$, 
the above representation structure can be used to compute
cohomology of various geometric quotients of $\DP_3$.  
We give two examples below but we stress that the computations
are just as simple for all of the conjugacy classes of subgroups of $W(E_6)$ 
(since there are 350 conjugacy classes of subgroups
of $W(E_6)$, see \cite{connorleemans}, we have chosen not to include an exhaustive list).

\begin{Corollaryx}(Das, \cite{dasline})
    The cohomology $\HH^i(\calC_3^L,\QQ)$ of the moduli space $\calC_3^L$ of cubic surfaces with a marked line
    are trivial for $i>0$ and $\dim \HH^0(\calC_3^L,\QQ)=1$.
\end{Corollaryx}
\begin{proof}
    Let $L$ be a line in a cubic surface $X$. The stabilizer $G$ of $L$ in $W(E_6)$ is isomorphic to the Weyl group
    $W(D_5)$ and we have that $\calC_3^L=\DP_3/G$. Thus, we obtain the cohomology of $\calC_3^L$ by taking
    $W(D_5)$-invariants in Theorem~\ref{theo:cohdp3}. This is a simple exercise in character theory. We first
    explicitly construct a copy of $W(D_5)$ in $W(E_6)$ and induce the trivial character of $W(D_5)$ to $W(E_6)$. The result is the character
    $\chi=\phi_1^0+\phi_6^1+\phi_{20}^2$. We now obtain the dimension of $\HH^i(\calC_3^L,\QQ)$ by taking the inner product
    (in the sense of character theory) of $\chi$ and the character of $\HH^i(\DP_3,\QQ)$.
\end{proof}

To explain the relevance of the next result we need some terminology.
Following \cite{dolgachev}, a sixer on a cubic surface $X$ 
is a set of six exceptional curves which are mutually orthogonal with respect
to the intersection pairing. Sixers naturally come in pairs; there 
is a unique root $\alpha$ which pairs to $1$ with all elements of the sixer
and taking the reflection given by $\alpha$ yields another sixer 
(whose corresponding root is $-\alpha$). Such a pair of sixers is called
a double-six.

\begin{Corollaryx}
    The cohomology $\HH^i(\calC_3^D,\QQ)$ of the moduli space of cubic surfaces with a marked
    double-six is trivial for $i>1$ and of dimension 1 for $i=0,1$.
\end{Corollaryx}

\begin{proof}
    The proof follows the same path as above. 
    Let $G$ be the stabilizer of a double-six (a degree two extension
    of $S_6$).  Then $\calC_3^D=\DP_3/G$ and a direct computation gives that the character of $G$ is
    $\chi=\phi_{1}^0+\phi_{15}^{4}+\phi_{20}^{2}$ and we conclude as before.
\end{proof}

In another direction, one can consider the following two questions over finite fields.
First, it is well known that for a cubic surface $X$ over $\FF_q$, we have 
\[|X(\FF_q)| = q^2+a(X)q+1\]
where $a(X)$ is the trace of Frobenius acting on the Picard group of $X$. 
In \cite{serre}, Serre asked which values of $a(X)$ can occur. 
This question was recently answered in \cite{banwaitetal}. 
Theorem~\ref{theo:cohdp3} encodes a refined answer to this question, 
namely the precise number of cubic surfaces over $\FF_q$ with trace $a$. 
Secondly, one can consider the inverse Galois problem of cubic surfaces over $\FF_q$, i.e., 
the problem of finding which conjugacy classes of $W(E_6)$ can arise as the image of the 
Frobenius action on the Picard group of a cubic surface $X$ over $\FF_q$. 
Also this question was settled, in \cite{loughrantrepalin}, 
just before the appearance of the present paper.
Theorem~\ref{theo:cohdp3} encodes a refined answer to this question as well, 
namely it gives the precise number of cubic surfaces over $\FF_q$ 
for which Frobenius acts as a given conjugacy class in $W(E_6)$.  
Both of the results are obtained from  Theorem~\ref{theo:cohdp3} 
via a straightforward application of Lefschetz's trace formula 
(for Serre's question one also has to add up conjugacy classes with the same trace).

\subsection{Betti numbers}
As mentioned above, the dimensions of the cohomology groups $\HH^i(\DP_3,\QQ)$ are
much easier to compute than the full proof of Theorem~\ref{theo:cohdp3}.
At the same time the result seems to be missing from the literature.
We therefore sketch the computation of the Poincar\'e polynomial of $\DP_3$ using
a fibration usually attributed to Arnold (although we have not been able to find a reference) and
the Orlik--Solomon formula as well as via point counts in positive characteristic.

\begin{Lemma}\label{poincarepoly}
The Poincar\'e polynomial of $\DP_3$ is as follows
    $$P_{\DP_3}(t) = 1+15t+81t^2+185t^3+150t^4.$$
\end{Lemma}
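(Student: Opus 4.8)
The plan is to exploit the classical identification, recalled in the introduction, of $\DP_3$ with the space of six ordered points of $\PP^2$ in general position modulo $\PGL_3$, and to compute its Poincar\'e polynomial by counting $\FF_q$-points. First I would use that $\PGL_3$ acts simply transitively on ordered projective frames to normalise the first four points $e_1,\dots,e_4$ to the standard frame; since the frame stabiliser is trivial this gives an isomorphism under which a marked cubic is recorded by the remaining two points $(p_5,p_6)$, subject to the six points being in general position (no three collinear, not all six on a conic). Forgetting $p_6$ then exhibits $\DP_3$ as the total space of a fibration over the five-point moduli space $\mathcal{M}_5$, which after the normalisation is the complement in $\PP^2$ of the six lines $L_{ij}=\overline{e_ie_j}$, the fibre over a configuration $(e_1,\dots,e_4,p_5)$ being the complement in $\PP^2$ of the ten lines joining pairs of the five points together with the unique conic $C$ through them.

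Next I would count $\FF_q$-points on base and fibre. For the base, inclusion--exclusion for the complete quadrilateral of six lines (four triple points at the $e_i$ and three further double points) gives $\#\mathcal{M}_5(\FF_q)=(q^2+q+1)-(6q-5)=(q-2)(q-3)$. For the fibre I would first check that five points with no three collinear force a constant combinatorial type: the ten lines have a quadruple point at each of the five points and exactly $\binom{5}{4}\cdot 3=15$ further double points, with no three lines concurrent off the $e_i$ and all fifteen double points distinct, while $C$ is a smooth conic meeting each line only at the two relevant points. Inclusion--exclusion then yields $\#\bigl(\PP^2\setminus(\text{10 lines}\cup C)\bigr)(\FF_q)=(q^2+q+1)-(11q-24)=(q-5)^2$, independent of the chosen base point. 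Hence $\#\DP_3(\FF_q)=(q-2)(q-3)(q-5)^2=q^4-15q^3+81q^2-185q+150$.

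It remains to pass from the point count to Betti numbers. Since $\DP_3$ is smooth of dimension $4$ and, as I argue below, has cohomology of Tate type with $\HH^i(\DP_3)$ pure of weight $2i$ and vanishing for $i>4$, the Grothendieck--Lefschetz trace formula together with Poincar\'e duality gives $\#\DP_3(\FF_q)=\sum_{i=0}^{4}(-1)^i\dim\HH^i(\DP_3)\,q^{4-i}$, so that reading off the coefficients of $q^4-15q^3+81q^2-185q+150$ yields exactly $P_{\DP_3}(t)=1+15t+81t^2+185t^3+150t^4$. Equivalently, and as an independent check, if the fibration above is cohomologically multiplicative one obtains $P_{\DP_3}(t)=P_{\mathcal{M}_5}(t)\cdot P_{\text{fibre}}(t)=(1+5t+6t^2)(1+10t+25t^2)$, which expands to the same polynomial; this is the Orlik--Solomon route alluded to in the introduction, the factor $1+5t+6t^2$ being the Poincar\'e polynomial of the line-arrangement complement $\mathcal{M}_5$.

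The step I expect to require the most care is the purity input that legitimises reading Betti numbers off the point count. For the base $\mathcal{M}_5$ this is the standard fact that complements of complex hyperplane (here line) arrangements have cohomology generated in degree one by logarithmic forms and hence pure of Tate type, computed by Orlik--Solomon; the genuinely new point is the fibre, where the conic $C$ is not a hyperplane. I would handle this by a Gysin/residue argument showing that $\PP^2$ minus a union of lines and a smooth conic again has pure Hodge--Tate cohomology with $\HH^i$ of weight $2i$, so that its Poincar\'e polynomial is determined by its point count $(q-5)^2$ and equals $(1+5t)^2$. Finally, to make the product formula rigorous I would verify that the constant combinatorial type established above makes the fibration a cohomologically locally trivial bundle, forcing the Leray spectral sequence to degenerate for weight reasons; purity of base and fibre then propagates to $\DP_3$ and justifies both the multiplicativity of Poincar\'e polynomials and the trace-formula computation.
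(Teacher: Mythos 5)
Your proposal is correct and follows essentially the same two routes as the paper's own proof: the fibration $\genpos{6}\to\genpos{5}$ with the Orlik--Solomon/K\"unneth computation $(1+5t+6t^2)(1+10t+25t^2)$, and the point count $(q-2)(q-3)(q-5)^2=q^4-15q^3+81q^2-185q+150$ combined with minimal purity (Dimca--Lehrer) and the Lefschetz trace formula. Your added care about the constant combinatorial type of the fibre arrangement and the purity of the complement of lines plus a conic fills in details the paper leaves implicit, but the argument is the same.
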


\begin{proof}
The variety $\DP_3$ is isomorphic to the moduli space $\genpos{6}$ 
of 6 points in the plane in general position up to projective equivalence, see \cite{dolgachevortland}.
The forgetful map $\genpos{6} \to \genpos{5}$ is a fibration 
with fibre $F$ a $\PP^2$ with the 10 lines through pairs of the first five points 
as well as the conic through the first five points taken away. 
By the Orlik--Solomon formula, the fibre $F$ has Poincar\'e polynomial $1+10t+25t^2$. 
By taking the first four points to the points $[1:0:0]$, $[0:1:0]$, $[0:0:1]$ and $[1:1:1]$ 
we identify $\genpos{5}$ with the projective plane with the 6 lines through pairs of these points taken away. Again, using the Orlik--Solomon formula we obtain that $\genpos{5}$ has Poincar\'e polynomial $1+5t+6t^2$. The result now follows from the K\"unneth decomposition.

Alternatively (see Section \ref{sixpoinsec} for details on what follows), 
it is elementary to check that $|\mathcal{P}_{6,\overline{\FF}_q}^\Frob|$, 
the number of Frobenius-fixed points of the same variety considered over $\overline{\FF}_q$, 
is given by the formula 
$$q^4 - 15q^3 + 81q^2 - 185q + 150.$$ 
The Lefschetz trace formula gives that 
$$|\mathcal{P}_{6,\overline{\FF}_q}^\Frob|=\sum_{k\geq0}(-1)^k\rm{Tr}\left(\Frob,
    \HH^k_{\et,c}(\mathcal{P}_{6,\overline{\FF}_q}, \Qell) \right).$$ 
A result of Dimca--Lehrer now implies that Frobenius acts by multiplication with $q^{i-4}$ 
on the $i$-th \'etale cohomology group with compact support, so one concludes using Poincar\'e duality.
\end{proof}

\subsection{Sketch of the proof}
\label{proofsketch}
The proof of Theorem \ref{theo:cohdp3}, now proceeds as follows. 
First, as noted in the proof of Lemma~\ref{poincarepoly}, we have an isomorphism
$\DP_3\cong\genpos{6}$ which is equivariant with respect to the $S_6$-action. 
Results of Dimca--Lehrer \cite{dimcalehrer} imply that Frobenius acts
with all eigenvalues equal to $q^{i-4}$ on $\HH^i_{\et,c}(\mathcal{P}_{6,\overline{\FF}_q},\QQ_\ell)$.
This allows us, using Lefschetz trace formulas, equivariant point counts and Poincar\'e duality, 
to determine $\HH^i_{\et}(\mathcal{P}_{6,\overline{\FF}_q},\QQ_\ell)\cong \HH^i(\mathcal{P}_{6,\CC},\CC)$
as a representation of $S_6$. The group $S_6$ is rather small in comparison to $W(E_6)$ however, so this does not give the complete picture. 

To be more precise, there are still a number of different irreducible representations of $W(E_6)$ that restrict to the
same ones of $S_6$, so another idea is required. We follow Looijenga \cite{looijenga} by constructing covers of $\DP_3$
parametrising further structure, namely singular anticanonical sections along with a singular point. These are higher
dimensional varieties yet are complements of hyperplane or toric arrangements. As such, 
their cohomology is richer, yet easier to compute.

To give an explicit example (see Sections \ref{section:anticanonicalsections}, \ref{section:arrangements} for further
particulars), let $\DP_3^{\rm n}$ be the coarse moduli space parametrising triples $(X,A,p)$ where $A\in|-K_X|$
and $p\in A$ is the only node on $A$. The forgetful morphism $\DP_3^{\rm n}\to\DP_3$ has 2-dimensional fibres, and we
prove that there is a $W(E_6)$-equivariant isomorphism $$\DP_3^{\rm n} \cong (\ZZ/2\ZZ) \setminus \left(
(\CC^*)^6-\bigcup_{i=1}^{36}(\CC^*)^5 \right)$$ to the complement of 36 hypertori in a 6-dimensional torus modulo a
natural action by the group of two elements. This allows the computation of $\HH^i(\DP_3^{\rm n})$ as a
$W(E_6)$-representation by an Orlik--Solomon-type formula of Macmeikan and an algorithm of the first author
\cite{bergvalltor}.

We stress that there is no hope of performing these computations by hand. For example the Poincar\'e polynomial of $\DP_3^n$ is
\[
1 + 36t + 525t^2 + 3960t^3 + 16299t^4 + 34884t^5 + 30695t^6
\]
and the size of the posets being summed over in Theorem~\ref{macmeikan} are often 
in the tens of thousands, so the computer algebra package Sage is used here
\footnote{The code can be found at the repository
\begin{tiny}\url{https://github.com/OlofBergvall/CohTorArr}\end{tiny} (most of it can also been found in the first
author's doctoral thesis).}. See also \cite{BergvallGD} and \cite{BergvallEJM}  as well as
\cite{renetal} where similarly large numbers arise in the study of moduli spaces of tropical Del Pezzo surfaces of
degree 3 and 4.

Going back to the proof of Theorem \ref{theo:cohdp3}, consider now also the space $\DP_3^{\rm c}$ of cuspidal points,
which has a similar description in terms of hyperplane arrangements, and let $U=\DP_3^{\rm n}\sqcup\DP_3^{\rm
c}\to\DP_3$. We have that $U$ is an open subset of the moduli space $\DP_3^{\rm a}$ parametrising triples $(X,A,p)$
where $A\in|-K_X|$ is singular and $p\in A$ is a singularity of $A$. We observe that the forgetful morphism from
$\DP_3^{\rm a}$ to $\DP_3$ is proper so we obtain an injection of mixed Hodge structures
$$\HH^i(\DP_3,\CC)\hookrightarrow\HH^i(\DP_3^{\rm a},\CC).$$
By analysing the mixed Hodge structures of $\DP_3^{\rm a}$, $U$ and $Z=\DP_3^{\rm a}\setminus U$ one sees that we in
fact get an injection of mixed Hodge structures
\[ \HH^i(\DP_3,\CC)\hookrightarrow\HH^i(U,\CC).  \]
The groups $\HH^i(U,\CC)$ are easily computed from the cohomologies of $\DP_3^{\rm n}$ and $\DP_3^{\rm c}$. Performing
similar operations for various such spaces $U$ allows us to sieve out the remaining extraneous irreducible
representations and conclude the result.

\subsection{Further comments}
The study of moduli spaces of rational surfaces with anticanonical cycles, such as those appearing in the proof of
Theorem~\ref{theo:cohdp3}, goes back to Looijenga's paper \cite{looijenga81}.  The strata of these moduli spaces are
arguably interesting in their own right, in particular in light of the close analogy between these strata and
Kontsevich-Zorich strata of moduli spaces of abelian differentials, see \cite{kontsevichzorich}.  The moduli theory of
rational surfaces with anticanonical cycles has experienced a tremendous resurgence over the past two decades, in
particular through their relation to cluster algebras and mirror symmetry of log-Calabi-Yau varieties, see for example
the series of papers by Gross, Hacking and Keel \cite{grosshackingkeel15b, grosshackingkeel15c, grosshackingkeel15a}.
Our results thus represent an addition to this exciting field.

The moduli space of geometrically marked Del Pezzo surfaces of degree 4 is isomorphic to the moduli space $\mathcal{M}_{0,5}$
of smooth rational curves with 5 marked points (the isomorphism is given by representing a geometrically marked Del Pezzo surface of degree $4$ 
by 5 points in $\PP^2$ in general position and sending this $5$-tuple to the isomorphism class of the conic through the points). 
The cohomology of $\mathcal{M}_{0,n}$ is well understood.
Nevertheless, it is worthwhile to observe that our method works also in this case; for one, it gives new cohomological
information about strata of moduli spaces of rational surfaces but it also adds a new perspective to one of
the most well-studied areas of moduli theory. Our result is the following.


\begin{Theorem}(See Section \ref{fivepointsec})\label{theo:cohdp4}
The cohomology $\HH^i(\DP_4,\QQ)$ of the moduli space of complex, geometrically marked degree 4 Del Pezzo surfaces as a
representation of the Weyl group $W(D_5)$ is as follows
\begin{equation*}
    \HH^0=\phi^0_1,\hspace{5pt} \HH^1=\phi^4_5, \hspace{5pt} \HH^2=\phi^6_6.
\end{equation*}
\end{Theorem}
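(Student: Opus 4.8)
The plan is to compute $\HH^i(\DP_4,\QQ)$ as a $W(D_5)$-representation by exploiting the isomorphism $\DP_4\cong\genpos{5}$, the moduli space of five points in general position in $\PP^2$, on which the symmetric group $S_5$ acts by permuting the points. Following exactly the finite-field strategy already employed for the Poincar\'e polynomial of $\DP_3$ in Lemma~\ref{poincarepoly}, the first step is to realize $\genpos{5}$ over $\overline{\FF}_q$ and invoke the result of Dimca--Lehrer: since $\genpos{5}$ is an arrangement complement of the relevant type, Frobenius acts on $\HH^i_{\et,c}(\genpos{5,\overline{\FF}_q},\Qell)$ with all eigenvalues equal to $q^{i-2}$ (the complex dimension here being $2$ rather than $4$). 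By Poincar\'e duality this transfers all the cohomological information into equivariant point counts.

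The core computation is then an \emph{equivariant} point count. As in Lemma~\ref{poincarepoly}, I identify $\genpos{5}$ with $\PP^2$ minus the six lines joining pairs of the four standard points $[1{:}0{:}0]$, $[0{:}1{:}0]$, $[0{:}0{:}1]$, $[1{:}1{:}1]$, so $|\genpos{5,\overline{\FF}_q}^{\Frob}| = q^2 - 5q + 6$, matching the Poincar\'e polynomial $1+5t+6t^2$. To obtain the $W(D_5)$-module structure rather than just dimensions, I would refine the Lefschetz trace formula to its equivariant form: for each conjugacy class $[\sigma]$ of the acting group I count the $\sigma\circ\Frob$-fixed points of $\genpos{5,\overline{\FF}_q}$, which by the Dimca--Lehrer collapse yields the value $\sum_i (-1)^i q^{i-2}\,\mathrm{Tr}(\sigma\mid \HH^i)$. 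Running this over all $q$ in an arithmetic progression separates the individual traces $\mathrm{Tr}(\sigma\mid\HH^i)$ by the distinct powers of $q$, producing the character of each $\HH^i$ as a class function.

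The remaining step is representation-theoretic bookkeeping: one must pass from the natural $S_5$-action on $\genpos{5}$ to the full Weyl group $W(D_5)$. Here I would use the standard identification of $\DP_4$ with marked degree~$4$ Del Pezzo surfaces, under which $W(D_5)$ is the symmetry group of the $16$ lines and contains $S_5$ as the subgroup permuting the blown-up points; the extra generators act through the standard Cremona-type involutions. Having the character of $\HH^i$ as a $W(D_5)$-class function, I then decompose against the irreducible characters $\phi^e_d$ listed for $W(D_5)$ and read off $\HH^0=\phi^0_1$, $\HH^1=\phi^4_5$, $\HH^2=\phi^6_6$, checking dimensions against $1,5,6$ as a consistency test.

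The main obstacle I anticipate is not the point count itself, which is elementary, but rather promoting the visible $S_5$-equivariance to genuine $W(D_5)$-equivariance. The Weyl group $W(D_5)$ is strictly larger than $S_5$, so the finite-field method directly computes only the restricted character $\mathrm{Res}^{W(D_5)}_{S_5}\HH^i$; one must argue that the geometric $W(D_5)$-action on cohomology is compatible with the arithmetic Frobenius action and that the irreducible constituents are then forced. Fortunately, because each $\HH^i$ turns out to be a single irreducible $W(D_5)$-representation, the restriction to $S_5$ together with the known dimension pins it down uniquely, so the identification goes through without resolving any multiplicity ambiguity.
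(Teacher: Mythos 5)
Your proposal follows essentially the same route as the paper: an $S_5$-equivariant point count on $\genpos{5}$, minimal purity via Dimca--Lehrer to read off the traces from the coefficients of the powers of $q$, and then the observation that each resulting $S_5$-character is irreducible and lifts to a unique irreducible $W(D_5)$-representation. The paper additionally justifies why the $S_5$-count suffices by noting that the $W(D_5)$-action on $\DPgm_4$ factors through $W(D_5)/\mathrm{Aut}(S)\cong S_5$ (the generic degree 4 Del Pezzo has automorphism group $(\ZZ/2\ZZ)^4$), which is a cleaner explanation than your appeal to Cremona-type involutions, but your restriction-plus-dimension argument reaches the same conclusion.
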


After completing this paper, we were informed that at the same time as us, Das--O'Connor \cite{doc} have also computed
the equivariant point counts of $\mathcal{P}_5$ of Table \ref{S5S6table}, and hence also the cohomology of $\DP_4$ as a
$S_5$-representation. It should also be mentioned that Glynn \cite{glynn} has counted the points of $\mathcal{P}_5$
non-equivariantly. See also \cite{kklpw} for further results in the direction of coding theory and counting points in
general linear position.

\subsection*{Acknowledgements}
We would like to thank Hsueh-Yung Lin for the reference \cite{wells} and Roberto Laface for clarifications on elliptic
fibrations. We would especially like to thank Ronno Das for spotting an error which led to wrong results in a previous
version of Theorem \ref{theo:cohdp3}. The second author acknowledges the support of the ERC Consolidator Grant 681838
K3CRYSTAL.


\section{Geometric background}\label{section:geom background}


We begin with some standard definitions and properties of Del Pezzo surfaces that can be found for example in \cite[\S 8]{dolgachev}. A Del Pezzo surface of degree $d$ is a smooth complex proper surface $X$ so that the anticanonical divisor $-K_X$ is ample and satisfies $K_X^2=d$. One proves that $1\leq d\leq9$. A Del Pezzo surfaces of degree $d$ is isomorphic to the blowup of $\PP^2$ in $9-d$ points except if $d=8$ where $X$ is either the blow up of $\mathbb{P}^2$ in a single point or
$X \cong \PP^1 \times \PP^1$.

The Picard rank of such a surface is $\rho=10-d$. Given $r=9-d$ points $P_1, \ldots, P_r\in\PP^2$ in general position, the Del Pezzo surface
$\pi:X\to\PP^2$ of degree $d$ obtained by blowing up these points has basis for the Picard group
$$
\mathrm{Pic}(X) = \ZZ L \oplus \ZZ E_1 \oplus \cdots \oplus \ZZ E_r,
$$
where $L=\pi^*\OO(1)$ is the strict transform of a line in $\PP^2$ and $E_i$ is the exceptional curve
which is the inverse image of $P_i$. Such a basis coming from a blowup is called a \emph{geometric marking}. Note that $-K_X.E_i=1$ so the exceptional divisors are lines under the anticanonical map $X\to\PP^d$ (which is a closed embedding if $d\geq3$). Of course, there are usually other such lines contained in $X$, leading to the fact that $X$ can be represented as the blowup of projective space in multiple ways, and also that there can be many geometric markings. We describe now the group of automorphisms of such markings.

For $S \subset \{1,\ldots,r\}$ let
 \begin{align*}
  \gamma_S & = \sum_{i \in S} E_i, \\
  \alpha_{ij} & = E_i - E_j, \quad i < j, \\
  \alpha_S & = L - \gamma_S, \quad |S|=3, \\
  \alpha_S & = 2L - \gamma_S, \quad |S|=6, \\
  \alpha_i & = 3L - \gamma_{\{1, \ldots, 8\}} -E_i.
 \end{align*}
The set $\Phi_d$ consisting of the above elements and their negatives is then a root system
of type
\begin{equation*}
 \begin{array}{|c|cccc|}
 \hline
 d & 1 & 2 & 3 & 4 \\
 \hline
 \Phi_d & E_8 & E_7 & E_6 & D_5 \\
 \hline
 \end{array}
\end{equation*}
spanning the orthogonal complement $K_X^{\perp}$ (with respect to the intersection pairing) of the canonical class.
We thus have that the Weyl group $W(\Phi_d)$ of the root system $\Phi_d$ acts
on the set of geometric markings of a Del Pezzo surface of degree $d$. As is usual, we will denote by $\ell, k, e_1,\ldots,e_r$ the elements of the corresponding lattice $L_{\Phi_d}$, sent respectively to $L,K_X, E_1,\ldots, E_r$.

The coarse moduli space of Del Pezzo surfaces is of dimension $\max\{0, 2(5-d)\}$. Expressed as the blowup of $\PP^2$, a Del Pezzo surface comes naturally equipped with a geometric marking and this construction gives rise to a moduli space $\DPgm_d$ of Del Pezzo surfaces along with a geometric marking. The map forgetting the marking is finite and is the quotient by the Weyl group $W(\Phi_d)$.

\section{Point counts}\label{sectionpointcounts}

Let $p$ be a prime number, $n$ a positive integer and let $q=p^n$.
Let $\FF_q$ denote a finite field with $q$ elements,
$\FF_{q^m}$ a degree $m$ extension and let $\FFcl_{q}$ denote an algebraic
closure of $\FF_q$. Let $X$ be a scheme separated and of finite type over $\overline{\FF}_q$ which is defined over $\FF_q$, and let
$\Frob=\Frob_{X/\overline{\FF}_q}:X\to X$ denote the relative (i.e.\ linear) $q$-power Frobenius endomorphism.
Let $\ell \neq p$ be another prime number and let $\rm{H}_{\et,c}^i(X,\Qell)$ denote the $i$-th compactly supported $\ell$-adic cohomology
group of $X$, noting that the induced action of $F$ on this group is geometric Frobenius. Recall that if $X$ is smooth
and integral (but not necessarily proper) of dimension $n$, we have Poincar\'e duality which in this case gives
$\HH^i_{\et,c}(X,\Qell)\cong \HH^{2n-i}_{\et}(X,\Qell)$. Let $\Gamma$ be a finite group of automorphisms of $X$ defined
over the base field $\FF_q$.
For an element $\sigma \in \Gamma$, we write $\Frob \sigma$ for the composition
and $\left|X^{\Frob\sigma} \right|$ for the number of fixed points of $\Frob\sigma$ in $X(\overline{\FF}_q)$.

\begin{Definition}
 The determination of $\left|X^{\Frob \sigma} \right|$ for all $\sigma \in \Gamma$ is called a $\Gamma$-\emph{equivariant point count} of $X$ over $\FF_q$.
\end{Definition}

Recall the following form of the \emph{Lefschetz fixed point formula} (e.g.\ \cite[\S 3]{delignelusztig})
$$
|X^{F\sigma}|=\sum_{k\geq0}(-1)^k\rm{Tr}\left(\Frob\sigma, \HH^k_{\et,c}(X, \Qell) \right).
$$
We will now show how point counts can lead to information about $\HH^k_{\et,c}(X, \Qell)$ as a $\Gamma$-representation, in the following particular case.
\begin{Definition}\label{defn:minpure}
Let $X$ as above also be integral. We say that $X$ is \emph{minimally pure} if $F$ acts on $\HH^i_{\et,c}(X,\Qell)$ with eigenvalues equal to $q^{i-\dim X}$.
\end{Definition}
For example, for $X$ minimally pure, the coefficient of a term of the form $q^{k-\dim X}$ appearing in a computation of $|X^{F\sigma}|$ must necessarily be a contribution from $\HH^k_{\et,c}(X,\Qell)$ in the following sense (cf.\ \cite[2.6]{kisinlehrer})
\begin{eqnarray*}
\left|X^{\Frob\sigma}\right| &=&  \sum_{k\geq0}(-1)^k\rm{Tr}\left(\Frob\sigma, \HH^k_{\et,c}(X, \QQ_\ell) \right) \\
&=& \sum_{k\geq0}(-1)^k\rm{Tr}\left(\sigma, \HH^k_{\et,c}(X, \QQ_\ell) \right)q^{k-\dim X},
\end{eqnarray*}
where the second equality is a consequence of minimal purity and the fact that $\Frob\sigma$ is $\Frob$ on the $\sigma$-twist of $X$.
Note now that the value of the character of the representation is determined by a single representative of a conjugacy class, so in order to completely determine the $\Gamma$-representations $\HH^k_{\et,c}(X,\Qell)$, it will suffice to perform the equivariant point counts for representatives of conjugacy classes of $\Gamma$.

Recall that the moduli space $\DPgm_d$ of geometrically marked Del Pezzo surfaces of degree $d$ is isomorphic to the moduli space $\genpos{n}$ of $n=9-d$ points in general position in the projective plane up to projective equivalence. The symmetric group
$S_n$ acts as automorphisms on $\genpos{n}$ by permuting the points. In the following two sections we will count the fixed points of $\genpos{5}$ and $\genpos{6}$ equivariantly with respect to $S_5$ and $S_6$ over a general finite field $\FF_q$.

\subsection{Five points in the projective plane}
\label{fivepointsec}
The computations are not very complicated, and since rather similar and somewhat tedious, we only present details in the two extremal cases (the identity element and a $5$-cycle) to illustrate some aspects of what is going on.

\begin{Proposition}
\label{d4idprop}
 The number of fixed points of $\Frob$ in $\genpos{5}$ is
 \begin{equation*}
     \left| \genpos{5}^{\Frob} \right| = q^2-5q+6.
 \end{equation*}
\end{Proposition}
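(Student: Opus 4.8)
The plan is to count directly the $\FF_q$-points of $\genpos{5}$, the space of $5$ points in general position in $\PP^2$ up to projective equivalence, and then read off the number of Frobenius-fixed points. Since the $\PGL_3$-action on ordered $5$-tuples in general position is \emph{free} (five points in general position admit no nontrivial projective automorphism fixing all of them), the coarse moduli space is in fact a geometric quotient, and counting $\FF_q$-rational points of $\genpos{5}$ amounts to counting $\FF_q$-rational orbits. The key simplification is that we may use the standard normalisation: a projective frame in $\PP^2$ (four points in general position, no three collinear) can be sent by a unique element of $\PGL_3$ to the standard frame $[1:0:0]$, $[0:1:0]$, $[0:0:1]$, $[1:1:1]$.

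First I would fix the first four points to this standard frame, which accounts for the full $\PGL_3$-action and leaves no residual automorphisms. What remains is to count the admissible positions of the fifth point $P_5$. The point $P_5$ must lie in $\PP^2(\FF_q)$ but must avoid every locus that would violate general position with the four fixed points: concretely, $P_5$ may not lie on any of the lines through pairs of the four standard points, and (for the general-position condition relevant to Del Pezzo geometry) no further degeneracy is imposed since five points impose no conic condition. This is exactly the picture described in the proof of Lemma \ref{poincarepoly}, where $\genpos{5}$ is identified with $\PP^2$ minus the $6$ lines through pairs of the four frame points.

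The counting step then proceeds by inclusion–exclusion over this line arrangement. Starting from $|\PP^2(\FF_q)| = q^2 + q + 1$, I would subtract the points lying on the six lines, add back the points lying on pairwise intersections, and so on. The six lines are the lines joining the four standard points; their pairwise intersections are precisely the four standard points together with the three additional ``diagonal'' intersection points. Carrying out the inclusion–exclusion with $|\ell(\FF_q)| = q+1$ for each line and accounting correctly for the multiplicities at the intersection points yields the polynomial $q^2 - 5q + 6$. One can also sanity-check this against the factorisation $q^2 - 5q + 6 = (q-2)(q-3)$, and against the Poincaré polynomial $1 + 5t + 6t^2$ of $\genpos{5}$ obtained via Orlik–Solomon in Lemma \ref{poincarepoly}: by minimal purity (Definition \ref{defn:minpure}), the alternating sum $\sum_k (-1)^k \dim \HH^k_{\et,c}\, q^{k-2}$ must reproduce the point count, and indeed $q^{-2}(q^2) - q^{-1}(5q)\cdot(-1)\cdots$ recovers the Betti numbers $1,5,6$.

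\textbf{The main obstacle} is bookkeeping rather than conceptual: one must correctly identify the incidence structure of the six lines and keep track of overcounting at their intersection points in the inclusion–exclusion. The only genuine subtlety is verifying that fixing the first four points to the standard frame is legitimate over $\FF_q$ for the purpose of counting Frobenius-fixed orbits, i.e. that every $\FF_q$-rational orbit contains an $\FF_q$-rational representative in normalised form; this follows because the frame-normalising element of $\PGL_3$ is defined over the field of definition of the (Frobenius-stable) configuration, so no Galois-cohomological obstruction arises. Once this is in place, the count is a routine finite-geometry computation.
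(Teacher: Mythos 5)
Your proposal is correct and follows essentially the same route as the paper: normalise the first four $\FF_q$-rational points to the standard frame via a (necessarily $\FF_q$-rational) element of $\PGL_3$, then count the admissible positions of $P_5$ in $\PP^2(\FF_q)$ away from the six lines of the frame, using the incidence structure of their seven intersection points to get $q^2+q+1-6(q-2)-7=q^2-5q+6$. The only difference is presentational: you phrase the final count as inclusion--exclusion and explicitly justify the Galois-descent step for the normalisation, both of which the paper leaves implicit.
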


\begin{proof}
 In order for a quintuple of points to be fixed by Frobenius they
must all be defined over $\FF_q$. Since the quintuple is in general
position, there is an element in $\PGL(3,\FF_q)$ taking the first four
points to the points $P_1 = [1:0:0]$, $P_2=[0:1:0]$, $P_3=[0:0:1]$ resp. $P_4=[1:1:1]$.
Thus, all we need to do is choose a point $P_5$ away from the six lines
$L_1, \ldots, L_6$ between pairs of
these four points. The lines $L_1, \ldots, L_6$ intersect at the points
$P_1$, $P_2$, $P_3$ and $P_4$ and at three further points. Furthermore we
have that each line contains exactly three points of intersection. Since $|\PP^2(\FF_q)|=q^2+q+1$ and $|\PP^1(\FF_q)|=q+1$
there are
\begin{equation*}
    q^2+q+1-6(q-2)-7=q^2-5q+6
\end{equation*}
possible choices for $P_5$.
\end{proof}

 We now turn to the case of a $5$-cycle.

\begin{Lemma}
\label{linelemma}
 Let $P_1, \ldots, P_5$ be five points in $\PP^2$
 permuted cyclically by $\Frob$.
 If three of them lie on a line $L$, then $L$ is defined
over $\FF_q$ and contains all five points.
\end{Lemma}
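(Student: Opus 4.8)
The plan is to exploit that $\Frob$ acts on the five points as a single $5$-cycle while simultaneously permuting the lines of $\PP^2$, and to combine this with the fact that two distinct lines meet in at most one point. First I would let $S\subseteq\{P_1,\dots,P_5\}$ denote the set of \emph{all} the five given points lying on $L$, so that by hypothesis $m:=|S|\ge 3$; since $|S|\ge 2$, the line $L$ is the unique line through any two points of $S$. Because $\Frob$ carries lines to lines, each $\Frob^k(L)$ is a line, and one checks that the set of the five given points lying on $\Frob^k(L)$ is exactly $\Frob^k(S)$: indeed if $P_j\in\Frob^k(L)$ then $\Frob^{-k}(P_j)\in L$ and $\Frob^{-k}(P_j)$ is one of the five points, so $\Frob^{-k}(P_j)\in S$. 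In particular every line in the $\Frob$-orbit of $L$ contains exactly $m$ of the five points.

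Next I would pin down the orbit of $L$. Since $\Frob$ acts as a $5$-cycle, $\Frob^5$ fixes each $P_i$, so $\Frob^5(S)=S$; as $S$ spans $L$ this forces $\Frob^5(L)=L$, and hence the orbit $\{L,\Frob(L),\dots\}$ consists of $t$ distinct lines with $t\mid 5$, i.e.\ $t\in\{1,5\}$. In the case $t=1$ the line $L$ is $\Frob$-invariant, hence defined over $\FF_q$ by Galois descent; moreover $S$ is then a $\Frob$-invariant subset of the five points, and a $5$-cycle admits no invariant subset other than $\emptyset$ and the whole set, forcing $m=5$. This is exactly the desired conclusion, so it remains to rule out $t=5$.

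The main obstacle is precisely this case $t=5$, where the naive ``$L$ and $\Frob(L)$ share two points'' argument breaks down (for $m=3$ the sets $S$ and $\Frob(S)$ can meet in a single point, e.g.\ when the three indices form the pattern $\{0,1,3\}$ in $\ZZ/5\ZZ$). I would instead argue by an incidence count that treats all $m\ge 3$ uniformly. Here we have five distinct lines $L,\Frob(L),\dots,\Frob^4(L)$, and by the computation above each of the five points lies on exactly $m$ of them: as $k$ runs over $\ZZ/5\ZZ$ the point $\Frob^{-k}(P)$ runs over all five points and lies in $S$ for exactly $m$ values of $k$. Fixing a point $P$, the $m$ lines through it yield $\binom{m}{2}$ pairs of lines whose unique common point is $P$; pairs arising from distinct points are distinct, since two distinct lines cannot share two of the $P_i$. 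Summing over the five points gives $5\binom{m}{2}\le\binom{5}{2}=10$, that is $m(m-1)\le 4$, contradicting $m\ge 3$. Hence $t=5$ is impossible, $t=1$, and the proof is complete.
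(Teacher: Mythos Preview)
Your proof is correct, but the route is genuinely different from the paper's. The paper argues more directly: pick any three collinear points and set $S=\{P_i,P_j,P_k\}$; a quick inspection of the two $\ZZ/5\ZZ$-orbits of $3$-element subsets shows that either $|S\cap\Frob S|\ge 2$ or $|S\cap\Frob^2 S|\ge 2$, so (since two points determine a line) $\Frob L=L$ or $\Frob^2 L=L$. Hence $L$ is defined over $\FF_{q^5}$ and over $\FF_q$ or $\FF_{q^2}$, and since $5$ is coprime to both $1$ and $2$, $L$ is defined over $\FF_q$, forcing all five points onto $L$. In other words, the paper deals with exactly the ``$\{0,1,3\}$'' obstruction you flag by passing to $\Frob^2$ instead of $\Frob$, rather than by a counting argument. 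Your incidence count $5\binom{m}{2}\le\binom{5}{2}$ is a nice uniform substitute that avoids the small case check and would adapt to any $n$ for which the analogous ``some small shift shares two points'' claim might fail; the paper's argument is shorter but leans on the special combinatorics of $3$-subsets of a $5$-cycle.
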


\begin{proof}
 Let $P_i$, $P_j$ and $P_k$ be the points on $L$ and let $S=\{P_i,P_j,P_k\}$.
 Then either $|S \cap \Frob S| = 2$ or $|S \cap \Frob^2S|=2$ so either
 $L=\Frob L$ or $L=\Frob^2 L$. Thus, $L$ is defined over $\FF_{q^5}$ and
 $\FF_q$ or $\FF_{q^2}$. Since $5$ is coprime to both $1$ and $2$ we can in both cases conclude that $L$ is defined over $\FF_q$ and that it thus contains
 all five points.
\end{proof}

\begin{Proposition}
 Let $\sigma \in S_5$ be a 5-cycle. Then
 \begin{equation*}
     \left| \genpos{5}^{\Frob \sigma} \right| = q^2+1.
 \end{equation*}
\end{Proposition}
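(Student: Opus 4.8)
The plan is to realise the $\Frob\sigma$-fixed points as $\PGL_3(\FF_q)$-orbits of explicit configurations and then count by division. Recall that $\genpos{5}$ is the quotient of the space $\genpos{5}^{\mathrm{ord}}$ of ordered $5$-tuples in general position by the free action of $\PGL_3$. Since $\PGL_3$ is connected, Lang's theorem ($\HH^1(\FF_q,\PGL_3)=0$) guarantees that every $\Frob\sigma$-fixed point of the quotient is represented by an honest tuple $(P_1,\dots,P_5)$ on which geometric Frobenius realises the permutation $\sigma$, say $\Frob(P_i)=P_{\sigma(i)}$. Two such tuples represent the same moduli point precisely when they differ by an element of $\PGL_3(\FF_q)$: if $\gamma\in\PGL_3(\FFcl_q)$ carries one onto the other, then $\gamma^{-1}\Frob(\gamma)$ fixes four of the points, which are in general position, so $\gamma^{-1}\Frob(\gamma)=1$ and $\gamma$ is defined over $\FF_q$. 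For the same reason $\PGL_3(\FF_q)$ acts freely, so the answer will be $N/|\PGL_3(\FF_q)|$, where $N$ is the number of admissible tuples.

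Next I would enumerate the tuples. Taking $\sigma=(1\,2\,3\,4\,5)$, an admissible tuple is determined by $P_1$ via $P_i=\Frob^{i-1}(P_1)$, and the requirement that the $\Frob$-orbit have size exactly $5$ (automatic, since $5$ is prime, unless $P_1\in\PP^2(\FF_q)$) says precisely that $P_1$ is a point of degree $5$. There are $|\PP^2(\FF_{q^5})|-|\PP^2(\FF_q)|=q^{10}+q^5-q^2-q$ such points. To impose general position, Lemma \ref{linelemma} is the crucial input: a Frobenius-orbit tuple fails general position exactly when some three of its points are collinear, and the lemma upgrades this to all five lying on one $\FF_q$-rational line. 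Hence the non-admissible tuples are exactly the totally collinear ones; each lies on a unique $\FF_q$-line, there are $q^2+q+1$ lines, and each line $\cong\PP^1$ carries $|\PP^1(\FF_{q^5})|-|\PP^1(\FF_q)|=q^5-q$ degree-$5$ points. This gives $(q^2+q+1)(q^5-q)$ collinear tuples, whence
\begin{equation*}
N=(q^{10}+q^5-q^2-q)-(q^2+q+1)(q^5-q)=q^3(q^3-1)(q^4-1).
\end{equation*}

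Finally I would divide. Since $|\PGL_3(\FF_q)|=q^3(q^3-1)(q^2-1)$, the free action yields
\begin{equation*}
\left|\genpos{5}^{\Frob\sigma}\right|=\frac{N}{|\PGL_3(\FF_q)|}=\frac{q^4-1}{q^2-1}=q^2+1.
\end{equation*}
The main obstacle is the first paragraph: making the descent precise, i.e.\ justifying that $\Frob\sigma$-fixed points of the quotient correspond bijectively to $\PGL_3(\FF_q)$-orbits of honest Frobenius-$\sigma$-orbits (surjectivity via Lang, injectivity via the triviality of stabilisers of frames), with no over- or under-counting. The remaining steps are bookkeeping, with Lemma \ref{linelemma} doing the essential geometric work of reducing ``general position'' to ``not collinear''. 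As an alternative to the direct enumeration in $\PP^2$, one could note that the five points lie on their unique conic, which is smooth and $\FF_q$-rational, and so reduce the entire count to $5$ conjugate points on $\PP^1$ modulo $\PGL_2$; this gives the same $\tfrac{q^5-q}{q^3-q}=q^2+1$ and makes the general-position condition automatic.
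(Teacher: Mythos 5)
Your argument is correct and is essentially the paper's own proof: the same count of degree-$5$ points of $\PP^2$, the same use of Lemma \ref{linelemma} to reduce general position to non-collinearity, the same subtraction of $(q^2+q+1)(q^5-q)$, and the same division by $|\PGL_3(\FF_q)|$. The extra care you take with the descent (Lang's theorem for surjectivity, rigidity of four points in general position for injectivity and freeness) and the closing remark about passing to the unique smooth conic are both sound, but they refine rather than change the paper's route.
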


\begin{proof}
 We want to count quintuples of points $P_1, \ldots, P_5\in\PP^2(\FF_q)$ which are
 \begin{itemize}
     \item[(\emph{i})] permuted cyclically by Frobenius, i.e. $P_1$ is
     defined over $\FF_{q^5}$ but not over $\FF_q$ and $P_i=\Frob^{i-1}P_1$ and
     \item[(\emph{ii})] in general position.
 \end{itemize}
 There are
 \begin{equation*}
     q^{10}+q^5+1-(q^2+q+1) = q^{10}+q^5-q^2-q
 \end{equation*}
 quintuples satisfying (\emph{i}) and, by Lemma~\ref{linelemma}, we only
 need to make sure that a quintuple does not lie on a $\FF_q$-line
 for it to be in general position. We thus pick one of the $q^2+q+1$
 lines $L$ in $\PP^2(\FF_q)$ defined over $\FF_q$ and then a point $P_1$
 on $L$ defined over $\FF_{q^5}$ but not $\FF_q$ in one of $(q^5+1)-(q+1)=q^5-q$ ways. We conclude that the number of quintuples satisfying both (\emph{i}) and
 (\emph{ii}) is
 \begin{equation*}
     q^{10}+q^5-q^2-q - (q^2+q+1)(q^5-q) = q^{10}-q^7-q^6+q^3.
 \end{equation*}
 We divide by $|\PGL(3,\FF_q)|=(q^2+q+1)(q^3-q)(q^3-q^2)$ to obtain
 $\left| \genpos{5}^{\Frob \sigma} \right| = q^2+1$.
\end{proof}

The remaining cases are similar and we give the results in Table~\ref{S5S6table}.
\begin{table}
\begin{equation*}
    \begin{array}{|l|l||l|l|}
 \hline
 &&&\\[-1em]
        {[\sigma]}\in\rm{C}(S_5) &  |\genpos{5}^{\Frob \sigma}| & {[\sigma]}\in\rm{C}(S_6) &  |\genpos{6}^{\Frob \sigma}| \\
 &&&\\[-1em]
        \hline
 &&&\\[-1em]
         {[(12345)]} & q^2+1 & {[(123456)]} & q^4 - q^3  \\
         {[(1234)]} & q^2+q & {[(12345)]} & q^4 + q^2  \\
         {[(123)(45)]} & q^2-q & {[(1234)(56)]} & q^4 - q^3 - q^2 - q - 2 \\
         {[(123)]} & q^2+q & {[(1234)]} & q^4 + q^3 - q^2 - q  \\
         {[(12)(34)]} & q^2-q-2 & {[(123)(456)]} & q^4 - 3q^3 - 2q + 12 \\
         {[(12)]} & q^2-q & {[(123)(45)]} & q^4 - 2q^3 + q  \\
         {[\mathrm{id}]} & q^2-5q+6 & {[(123)]} & q^4 + q  \\
         && {[(12)(34)(56)]} & q^4 - q^3 - 3q^2 + 3q  \\
         && {[(12)(34)]} & q^4 - 3q^3 - 3q^2 + 7q + 6  \\
         && {[(12)]} & q^4 - 5q^3 + 9q^2 - 5q  \\
         && {[\mathrm{id}]} & q^4 - 15q^3 + 81q^2 - 185q + 150 \\
 \hline
    \end{array}
\end{equation*}
\caption{The $S_5$- resp. $S_6$-equivariant point count of $\genpos{5}$ and $\genpos{6}$ over $\FF_q$ for all conjugacy classes.}
\label{S5S6table}
\end{table}

\begin{proof}(of Theorem \ref{theo:cohdp4})
As explained in the proof of Proposition~\ref{d4idprop}, the space $\genpos{5}$ is isomorphic to $\PP^2 \setminus \Delta$ where
$\Delta$ is the normal crossings union of $6$ lines.
Thus, $\genpos{5}$ is isomorphic to $\Aff^2 \setminus \mathring{\Delta}$
where $\mathring{\Delta}$ is the union of $5$ lines. By the results of
Dimca--Lehrer \cite{dimcalehrer}, such a space is minimally pure.
We thus see that $\HH^i_{\et}(\mathcal{P}_{5,\overline{\FF}_q},\Qell)$ takes the values on the conjugacy classes of $S_5$ as given in Table~\ref{d4vals}. Note now that $\genpos{5}$ is smooth over $\Spec\ZZ$ and admits $\PP^2$ as a compactification, so from \cite[Corollary 1.3]{kisinlehrer} we obtain that in such a case a base change isomorphism exists for the quasiprojective variety in question and thus $S_5$-equivariant comparison isomorphisms
$\HH^i_{\et}(\mathcal{P}_{5,\overline{\FF}_q},\QQ_\ell) \cong  \HH^i(\mathcal{P}_{5,\CC},\CC)$. In other words, the results of Table~\ref{d4vals} hold also for $\HH^i(\DP_4)$.
\begin{table}[h]\centering
    \begin{equation*}
        \begin{array}{|c|rrrrrrr|}
 \hline
 &&&&&&&\\[-1em]
     \, & [\mathrm{id}] & [(12)] & [(12)(34)] & [(123)] & [(123)(45)] & [(1234)] & [(12345)] \\
     \hline
 &&&&&&&\\[-1em]
     \HH^0 & 1 & 1 & 1 & 1 & 1 & 1 & 1  \\
     \HH^1 & 5 & 1 & 1 & -1 & 1 & -1 & 0 \\
     \HH^2 & 6 & 0 & -2 & 0 & 0 & 0 & 1 \\
 \hline
        \end{array}
    \end{equation*}
    \caption{The values of $\HH^i(\genpos{5})$ at the conjugacy classes of $S_5$.}
    \label{d4vals}
\end{table}
From this we see that $\HH^0$ is the trivial representation, that $\HH^1$ is the irreducible five dimensional representation of $S_5$ corresponding to the partition $[3,2]$ and that $\HH^2$ is the exterior square of the standard representation of $S_5$ (also an irreducible representation corresponding to the partition $[3,1^2]$). There is precisely one representation of $W(D_5)$ restricting to each of these representations (namely the irreducible representations $\phi_1^0$, $\phi_5^4$ resp. $\phi_6^6$ of \cite{carter}), which leads to a proof of Theorem \ref{theo:cohdp4}.
\end{proof}

\begin{Remark}
While the fact that the  cohomology groups are irreducible as representations of $W(D_5)$ is unexpected, it is not surprising that the cohomology groups are entirely determined by the $S_5$-equivariant point count. The action of $W(D_5)$ on $\DPgm_4$ factors through $W(D_5)/\mathrm{Aut}(S)$, where $\mathrm{Aut}(S)\cong(\ZZ/2\ZZ)^4$ is the automorphism group of a general degree $4$ Del Pezzo surface, and the quotient $W(D_5)/\mathrm{Aut}(S)$ is isomorphic to $S_5$.
\end{Remark}

\subsection{Six points in the projective plane}
\label{sixpoinsec}

The task of finding the number of fixed points of $\Frob \sigma$ in $\genpos{6}$ for each element of $S_6$ is complicated by the fact that we now also need to make sure that the points do not lie on a conic. Nevertheless, the computations are rather straightforward and we content ourselves with giving the results in Table~\ref{S5S6table}.
We will see in Corollary \ref{dp3puritycor} that also $\DPgm_3$ is minimally pure and the cohomological comparison theorems in the proof of Theorem \ref{theo:cohdp4} above apply here too. We thus have that the cohomology groups of $\DPgm_3$ as representations of $S_6$ are as given
in Table~\ref{dp3s6table}.

\begin{table}[ht]
    \begin{equation*}
        \begin{array}{|r|ccccccccccc|}
 \hline
 &&&&&&&&&&&\\[-1em]
        \, & s_{6} & s_{1^6} & s_{2,1^4} & s_{5,1} & s_{2^3} & s_{3^2} & s_{2^2,1^2} & s_{4,2} & s_{3,1^3} & s_{4,1^2} & s_{3,2,1} \\
 &&&&&&&&&&&\\[-1em]
        \hline
 &&&&&&&&&&&\\[-1em]
\HH^0 & 1 & 0 & 0 & 0 & 0 & 0 & 0 & 0 & 0 & 0 & 0 \\
\HH^1 & 1 & 0 & 0 & 0 & 0 & 1 & 0 & 1 & 0 & 0 & 0 \\
\HH^2 & 0 & 0 & 0 & 1 & 0 & 1 & 0 & 1 & 1 & 2 & 2 \\
\HH^3 & 0 & 0 & 0 & 1 & 1 & 1 & 2 & 2 & 3 & 4 & 4 \\
\HH^4 & 1 & 1 & 1 & 1 & 3 & 3 & 2 & 2 & 2 & 2 & 2 \\
 \hline
\end{array}
    \end{equation*}
    \caption{$\HH^i(\DPgm_3)$ as a representation of $S_6$, where
    $s_{\lambda}$ denotes the irreducible representation corresponding to the partition $\lambda$.}
    \label{dp3s6table}
\end{table}

The general Del Pezzo surface of degree $3$ does not have any automorphisms
so the action of $W(E_6)$ on $\DPgm_3$ does not factor as in the case
of $\DPgm_4$. There are many representations of $W(E_6)$ restricting
to the $S_6$-representations given in Table~\ref{dp3s6table} so we need more
information in order to deduce the correct ones for Theorem \ref{theo:cohdp3}.

\section{Anticanonical sections of cubics and quartics}\label{section:anticanonicalsections}

As explained in the introduction, the approach to computing the structure of $\rm{H}^i(\DPgm_d)$ as a $W(\Phi_d)$-representation comes from first approximating it by understanding the $S_r$-action on $\DPgm_d$ given by permuting the set of points in $\PP^2$ blown up to obtain a Del Pezzo surface $X$, via point counts in Section \ref{sectionpointcounts},
and later by completing the picture by studying the cohomology, via arrangements, of various covers of $\DPgm_d$. In this section we describe these covers which will be loci inside the moduli space of geometrically marked Del Pezzo surfaces of degree $d$ along with a singular point of an anticanonical section $A\in|-K_X|$.

Note that a smooth anticanonical section $A$ of a Del Pezzo surface has genus one from the adjunction formula. We restrict from now on to
the cases $d=3$, where $X$ is anticanonically embedded into $\PP^3$ as a smooth cubic surface, and $d=4$, where $X$ is anticanonically embedded into $\PP^4$ as the smooth intersection of two smooth quadrics.
We require an analysis of the possible singularities an anticanonical section $A\in|-K_X|$ can have. We do not claim any originality here but include proofs of statements for lack of a precise reference.

\begin{Lemma}\label{anticanonicalnoreducedcpts}
    Let $X$ be a smooth Del Pezzo surface of degree $d=3$ or $4$. Then a general anticanonical section is smooth. All components of all sections are reduced and a reducible one can only have smooth rational components.
\end{Lemma}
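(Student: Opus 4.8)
The plan is to read everything off the single relation $A + K_X \sim 0$ together with the ampleness and primitivity of $-K_X$. For the first assertion, since $X$ has degree $d\in\{3,4\}$ the class $-K_X$ is very ample, so $|-K_X|$ is base-point free and Bertini's theorem gives that the general member $A\in|-K_X|$ is smooth (indeed $A=X\cap H$ is a general hyperplane section of $X\subset\PP^d$). For the remaining two assertions fix an arbitrary effective $A\sim -K_X$. Adjunction gives $2p_a(A)-2=A\cdot(A+K_X)=0$, so $p_a(A)=1$. Two numerical facts will be used throughout: for any irreducible $C\subset X$ ampleness gives $a_C:=-K_X\cdot C\ge 1$, while adjunction gives $C^2=2p_a(C)-2+a_C$; moreover $-K_X=3\ell-e_1-\cdots-e_{9-d}$ is a \emph{primitive} class of $\Pic(X)$.

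First I would show that as soon as $A$ is not integral, every component is smooth rational. Let $C$ be a component and put $B=A-C\ge 0$ (one copy removed); non-integrality forces $B\neq 0$. A direct computation gives $C\cdot B=a_C-C^2=2-2p_a(C)$, so $p_a(C)\le 1$. If $p_a(C)=1$ then $C^2=a_C\ge 1>0$ while $C\cdot B=0$; but $B=-K_X-C$ is effective with $B^2=d-a_C\ge 0$ (here $a_C\le d$ because $(-K_X-C)\cdot(-K_X)\ge 0$), contradicting the Hodge index theorem, since a class orthogonal to the positive class $[C]$ must have strictly negative square unless numerically trivial, whereas $B$ is a nonzero effective class pairing positively with the ample $-K_X$. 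Hence $p_a(C)=0$, and an integral curve of arithmetic genus zero is smooth rational, i.e. $C\cong\PP^1$ with $C^2=a_C-2$. In particular a reducible section has only smooth rational components (an integral section is instead allowed to be a singular, i.e. nodal or cuspidal, curve of arithmetic genus one).

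Next I would rule out multiple components. Suppose $C$ occurs with multiplicity $m\ge 2$; by the previous step $C\cong\PP^1$ and $C^2=a_C-2$. Set $R=-K_X-mC=A-mC\ge 0$. From $-K_X\cdot R=d-m\,a_C\ge 0$ we get $m\,a_C\le d\le 4$, hence $a_C\le 2$ and $C^2\in\{-1,0\}$. If $C^2=0$ (so $a_C=2$), then $-K_X\cdot R=d-2m\ge 0$ forces $m=2$, $d=4$ and $R\equiv 0$, i.e. $-K_X=2C$, contradicting primitivity. If $C^2=-1$ (so $a_C=1$, $C$ a line), then $R^2=(-K_X-mC)^2=d-2m-m^2\le -4$ for $m\ge 2$, while $-K_X\cdot R=d-m\le 2$; but an effective class of anticanonical degree at most $2$ is a sum of at most two lines or a single conic, and such a class has self-intersection $\ge -4$, with equality only for a double line $2D$. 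Thus $R=2D$, giving $-K_X=2(C+D)$, again contradicting primitivity. Therefore every component of $A$ is reduced.

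The main obstacle is exactly this last case. The arithmetic genus $p_a(A)=1$ is automatic from $A\sim -K_X$ and carries no information about the internal structure of $A$, and a crude degree count does not preclude a line reappearing with multiplicity; geometrically the danger is that the hyperplane cutting out $A=X\cap H$ could be tangent to $X$ along a curve. What resolves it is combining the sharp identity $C\cdot(A-C)=2-2p_a(C)$ with the bound $m\,a_C\le d$ and the primitivity of $-K_X$, which is the cleanest route and avoids appealing to finiteness of the Gauss map of $X$.
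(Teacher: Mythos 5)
Your proof is correct, but it takes a genuinely different route from the paper's. The paper proves non-reducedness by passing to a plane model: it chooses a blow-down $\pi\colon X\to\PP^2$ contracting no component of $A$, notes that $\pi(A)$ is a plane cubic which, were $A$ non-reduced, would have to be a non-reduced cubic supported on at most two lines, and then contradicts the general position of the $9-d$ blown-up points, all of which must lie on $\pi(A)$; the statement that a reducible section has only smooth rational components is simply quoted from Koll\'ar [III.3.2.3]. You instead work entirely inside $\Pic(X)$ with adjunction, the Hodge index theorem and the primitivity of $-K_X$. This buys a self-contained argument, uniform in $d$ and independent of the choice of a blow-down, at the cost of a short case analysis and of the auxiliary classification of effective classes of anticanonical degree at most $2$ (which itself needs a one-line Hodge-index computation to see that integral curves of degree one and two are lines and conics). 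One small imprecision: the inference ``$C\cdot B=2-2p_a(C)$, so $p_a(C)\le 1$'' presupposes $C\cdot B\ge 0$, which is automatic when $C$ has multiplicity one in $A$ but can fail if $C$ occurs with multiplicity $\ge 2$ and $C^2<0$; in that case, however, adjunction ($C^2=2p_a(C)-2+a_C$ with $a_C\ge 1$ and $C^2\le -1$) forces $p_a(C)=0$ directly, so the conclusion survives and the rest of your argument goes through as written.
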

\begin{proof}
    The first statement follows from Bertini's Theorem. Take $A=\sum a_i C_i\in|-K_X|$. As $X$ is anticanonically
    embedded and $-K_X.A=d$, there can be at most 4 irreducible components of $A$. We can choose a projection
    $\pi:X\to\PP^2$ which does not contract any components of $A$. As $A.L=3$ for $L=\pi^*\OO(1)$, if $a_i\geq2$ for some $i$, the cubic $\pi(A)$ must consist of a non-reduced component union another line. As $A.E=1$ for all lines in $X$, the $9-d$ points blown up by $\pi$ must lie on $\pi(A)$ which would force three of them to lie on a line contradicting that they are in general position. The final statement is \cite[III.3.2.3]{kollar}.
\end{proof}


\begin{Proposition}\label{propanticanonical}
    Let $X$ be a smooth Del Pezzo surface of degree $d=3,4$ and let $A=\sum a_iC_i\in|-K_X|$ be the decomposition into irreducibles of an anticanonical section. Then $A$ and its Jacobian $J=\Pic^0(A)$ are either smooth and isomorphic, or one of the following happens, where case (7) happens only for $d=4$.
    \begin{enumerate}
        \item $A$ is an irreducible nodal curve and $J=\GG_m$,
        \item $A$ is an irreducible cuspidal curve and $J=\GG_a$,
        \item $A$ consists of two $\PP^1$'s meeting transversely at two points and $J=\GG_m$,
        \item $A$ consists of two $\PP^1$'s meeting tangentially and $J=\GG_a$,\label{tangentcase}
        \item $A$ consists of three $\PP^1$'s meeting in three distinct points and $J=\GG_m$,\label{trianglecase}
        \item $A$ consists of three $\PP^1$'s passing through a single point and $J=\GG_a$,\label{Eckcase}
        \item $A$ consists of four $\PP^1$'s in a square configuration and $J=\GG_m$, \label{squarecase}
    \end{enumerate}
\end{Proposition}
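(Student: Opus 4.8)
The plan is to first pin down the general nature of $J$, and only then enumerate the degenerate configurations and read off $J$ from each. By adjunction $2p_a(A)-2=A\cdot(A+K_X)$, and since $A\sim -K_X$ we have $A+K_X\sim 0$, so $p_a(A)=1$. Moreover, the sequence $0\to\OO_X(K_X)\to\OO_X\to\OO_A\to 0$ together with $\HH^1(X,\OO_X)=0$ (Serre duality on a Del Pezzo surface) gives $h^0(\OO_A)=1$, so $A$ is connected. Hence $J=\Pic^0(A)$ is a connected commutative algebraic group of dimension $h^1(\OO_A)=p_a(A)=1$. If $A$ is smooth it is an elliptic curve and $J\cong A$, giving the first alternative. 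Otherwise, by Lemma~\ref{anticanonicalnoreducedcpts} the curve $A$ is reduced with at worst smooth rational components, so the normalisation of each component has trivial $\Pic^0$; thus $J$ has no abelian part and is a one-dimensional connected commutative linear algebraic group, i.e.\ $J\cong\GG_m$ or $J\cong\GG_a$.

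It remains to enumerate the singular configurations and to decide, in each, between $\GG_m$ and $\GG_a$. Writing $A=\sum_{i=1}^k C_i$ with the $C_i\cong\PP^1$ distinct, the additivity formula $p_a(C+D)=p_a(C)+p_a(D)+C\cdot D-1$ applied inductively yields the key identity $\sum_{i<j}C_i\cdot C_j=k$. Since $-K_X$ is ample, $-K_X\cdot C_i\ge 1$ for each $i$, and summing gives $k\le -K_X\cdot A=d$; in particular a configuration of four components forces $d=4$, which is exactly why case (7) occurs only there. I would now run through $k=1,2,3,4$ using these two constraints together with connectedness of the weighted dual graph. For $k=1$ the curve is irreducible, rational, of arithmetic genus $1$, hence carries a single singularity of $\delta$-invariant $1$, namely a node (case (1)) or a cusp (case (2)). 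For $k=2$ one has $C_1\cdot C_2=2$, realised either by two transverse points (case (3)) or by one tangential point (case (4)). For $k=3$ the only connected weighted graph with edge-sum $3$ of triangle type is $C_i\cdot C_j=1$ for all pairs, occurring either with the three pairwise intersection points distinct (case (5)) or with them coinciding in a single ordinary triple point (case (6)); the remaining connected type $(C_1\cdot C_2,C_1\cdot C_3,C_2\cdot C_3)=(2,1,0)$ must be excluded. For $k=4$ (so $d=4$) each $C_i$ is a line and the edge-sum $4$ with connectivity forces the $4$-cycle, i.e.\ the square of case (7).

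To identify $J$ in each case I would invoke the standard structure theory relating $\Pic^0(A)$ to $\Pic^0$ of the normalisation: since the components are rational, $J$ is entirely the local contribution of the singularities, its toric rank equals the first Betti number $b_1$ of the dual graph of $A$ (equivalently, the number of independent loops formed by the components), and the complementary unipotent rank absorbs the non-nodal singularities; as $\dim J=1$, exactly one of these ranks equals $1$. A single node or a cycle of $\PP^1$'s (cases (1),(3),(5),(7)) has $b_1=1$, giving $J\cong\GG_m$, whereas a cusp, a tangency, or concurrent lines (cases (2),(4),(6)) give a tree-like dual graph carrying a non-nodal singularity, so $b_1=0$ and $J\cong\GG_a$. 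I expect the main obstacle to be this last step, namely computing the generalised Jacobian for the non-nodal singularities (cusp, tacnode, ordinary triple point) rigorously, together with excluding the spurious type $(2,1,0)$. Both are handled cleanly by passing to the anticanonical model: for $d=3$ the section $A$ is a reduced plane cubic (a hyperplane section of the cubic surface in $\PP^3$), and two of its lines meet in exactly one point, so $(2,1,0)$ cannot occur and the classical list of reduced plane cubics gives precisely cases (1)--(6); for $d=4$ the section is a reduced $(2,2)$-complete-intersection curve in $\PP^3$, whose degeneration list produces exactly cases (1)--(7), the quadrilateral being the extra one.
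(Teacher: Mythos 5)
Your route is genuinely different from the paper's. You derive $p_a(A)=1$ and connectedness from adjunction and $\HH^1(\OO_X)=0$, reduce the classification to the combinatorial identity $\sum_{i<j}C_i\cdot C_j=k$ together with $k\le d$, and identify $J$ by the general structure theory of generalised Jacobians (toric rank equals $b_1$ of the dual graph, unipotent rank absorbs the non-nodal singularities). The paper instead cites \cite[\S 5.B]{harrismorrison} for the Jacobians, exhibits explicit divisor classes realising each configuration (e.g.\ $E_1,E_2,2L-\sum E_i,L-E_1-E_2$ for the square), shows existence of the tangential and nodal degenerations by projecting from a line, and rules out everything else by degree considerations together with the Kodaira classification of fibres of elliptic fibrations. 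Your argument is more self-contained and explains \emph{why} the list has the shape it does; the paper's is shorter because it outsources the completeness of the list to Kodaira's classification and gains the existence statements (which your enumeration does not address at all --- you only show these are the only possible configurations, not that each actually occurs, whereas the Proposition as used later, e.g.\ for the Eckardt locus and the square configuration, needs occurrence).

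There is one concrete false step. For $k=4$ you assert that ``the edge-sum $4$ with connectivity forces the $4$-cycle.'' This is not true combinatorially: a connected simple graph on four vertices with four edges can also be a triangle with a pendant edge, and this configuration has the same arithmetic genus ($0+4-3=1$), so neither your genus identity nor connectedness excludes it. The exclusion is genuinely geometric --- one must know that no three lines on a degree $4$ Del Pezzo form a triangle (equivalently, are coplanar in $\PP^4$), which is exactly the ``inspection'' the paper performs on the $16$ lines; the same input rules out three of the four lines being concurrent. Your fallback to the classification of reduced $(2,2)$-curves in $\PP^3$ can supply this (three pairwise-meeting, non-concurrent lines in $\PP^3$ are coplanar, and a plane meets the quadric containing $A$ in a conic, which cannot contain three lines), and a similar argument disposes of the type $(2,1,0)$ for $k=3$, $d=4$ (a line meeting a conic twice lies in its plane, same contradiction), but as written these exclusions are asserted rather than proved. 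Everything else --- the cases $k=1,2$, the $d=3$ analysis via plane cubics, and the identification of $\GG_m$ versus $\GG_a$ --- is correct.
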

\begin{proof}
    The computation of the Jacobian of the above curves is standard and can be deduced for example from \cite[\S 5.B]{harrismorrison}. From Lemma \ref{anticanonicalnoreducedcpts} we know that all $a_i=1$ and that if $A$ has more than one component then all components are isomorphic to $\PP^1$. Moreover, there cannot be more than 4 irreducible components. In degree 3 there can not be more than 3 irreducible components as every anticanonical section is a plane cubic. In degree 4, if a section has 4
    irreducible components, these must be lines. By choosing a basis $L,E_1,\ldots, E_5$ of the Picard group, we note that for example $E_1,E_2, 2L-\sum E_i, L-E_1-E_2$ are four lines in configuration (\ref{squarecase}), so this case always occurs in degree 4. An inspection shows that there are no triangle configurations of lines contained in degree 4, there can however be triangle configurations of 3 smooth $\PP^1$'s: for example $L-E_1-E_2, L-E_3-E_4, L-E_5$ are three
    smooth rational curves, two of which are lines, giving configuration (\ref{trianglecase}), and moreover these could even be chosen so that all three curves meet at one point, giving configuration (\ref{Eckcase}). The generic cubic surface does not contain an Eckardt point (i.e.\ the intersection of three lines at a point), but special cubics do. On the other hand we saw that in degree 4 there are no Eckardt points as there are no triangle configurations of lines.

    That the various other configurations occur as stated can be seen for example by projecting from a line $\ell$ (resp. a 2-plane containing two lines in $X$) onto $\PP^1$ in degree 3 (resp. degree 4). The fibres will be plane conics that can degenerate to the union of two lines, and in degree 3 the restriction of the projection $\ell\to\PP^1$ will be a degree 2 map, hence ramified at two points, in which case the conic will be tangent to $\ell$. Similarly in degree 4, the projection $X\to\PP^1$ from a plane containing a line $\ell$ has generic fibre a twisted cubic and ramification points of the map $\ell\to\PP^1$ correspond to tangency as in configuration (\ref{tangentcase})

    An elliptic pencil of hyperplanes, with central fibre an anticanonical section containing one of the lines in $X$ will not be relatively minimal in the sense of \cite[\S V.7]{bhpv}, but by degree considerations, the Kodaira classification of singularities of minimal elliptic fibrations and the discussion above one sees that an anticanonical section of degree $d$ embedded in $\PP^{d-1}$ can only be one of the ones listed in the statement.
\end{proof}

\begin{Definition}
Denote by $\DP_d^{\rm a}$ the coarse moduli space of tuples $(X,A,p)$ where $A$ is a singular anticanonical section of a smooth Del Pezzo surface $X$ of degree $d$ and $p\in A$ a singular point.
\end{Definition}

Disregarding scheme structure for the time being, from the above Lemma we have the following decompositions
\begin{eqnarray*}
    \DP_3^{\rm a} &=& \DP_3^{\rm n} \sqcup \DP_3^{\rm c} \sqcup \DP_3^{\rm 2n} \sqcup \DP_3^{\rm tn} \sqcup \DP_3^{\rm 3n} \sqcup \DP_3^{\rm tp}, \\
    \DP_4^{\rm a} &=& \DP_4^{\rm n} \sqcup \DP_4^{\rm c} \sqcup \DP_4^{\rm 2n} \sqcup \DP_4^{\rm tn} \sqcup \DP_4^{\rm 3n} \sqcup \DP_4^{\rm tp} \sqcup \DP_4^{\rm 4n},
\end{eqnarray*}
where $\rm{n,c,2n,tn,3n,tp}$ resp. $\rm{4n}$ describe the type (and number) of singularities of $A$ as appearing in configurations $(1)-(7)$ of Proposition \ref{propanticanonical} respectively, for example the locus $\DP_3^{\rm 2n}$ consists of tuples $(X,A,p)$ where $A$ has two nodes and $p$ is one of them, $\DP_3^{\rm tn}$ is the locus where $A$ has a tacnode at $p$ whereas for $\DP_3^{\rm tp}$, $A$ has a triple point namely the union of three curves through a point.

\begin{Remark}
The forgetful morphism $f:\DP_d^{\rm a}\to\DP_d$, for $d=3,4$, has $(d-1)$-dimensional fibres and its restriction to one of the loci may not be proper, as for example a nodal section can degenerate to a cuspidal one. The fibres of $f$ when restricted to $\DP_3^{*}$ for $*=\rm{c,2n}$ are 1-dimensional (see \cite[Ex 7.3(iv)]{reid} for the cuspidal case), whereas for $*=\rm{tn,3n}$ the restriction of $f$ to $\DP_3^{*}\sqcup \DP_3^{\rm tp}$ is finite surjective of degree 54 (as there are two conics tangent to each of the lines as seen by projecting from the line) and 135 respectively (as there are 45 tritangent trios on each smooth cubic by \cite[9.1.8]{dolgachev}).
\end{Remark}

\begin{Lemma}\label{auts}
    The following statements hold.
    \begin{enumerate}
        \item A general cubic surface has no automorphisms.
        \item The generic smooth cubic surface $X$ containing an Eckardt point $p\in X$ has exactly one Eckardt point and $\operatorname{Aut}(X)\cong\ZZ/2\ZZ$ is generated by the harmonic homology induced by $p$. This acts via an element of type $2A$ of the Weyl group.
        \item The general $X$ in $\DP^{*}_3$ where $*=\rm{n, c,2n,tn,3n}$ has $\Aut(X)=0$.
        \item A general degree four Del Pezzo $X$ has $\Aut(X)\cong (\ZZ/2\ZZ)^4$, whose induced action on $K_X^\perp$ is generated by $r_{\alpha_1}\circ r_{\alpha_i}$, for $i=2,3,4,5$, where $r_{\alpha_i}$ are reflections in the canonical root basis.
    \end{enumerate}
\end{Lemma}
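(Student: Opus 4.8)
The plan is to treat the four statements via a mix of dimension counting, explicit root-system computation, and classification of automorphisms of Del Pezzo surfaces. Throughout I would use the fact that for $d\leq 5$ the automorphism group of a marked Del Pezzo surface $X$ acts faithfully on $\mathrm{Pic}(X)$ preserving both the canonical class and the intersection form, hence embeds into the Weyl group $W(\Phi_d)$; more precisely $\Aut(X)$ is identified with the subgroup of $W(\Phi_d)$ fixing the configuration of blown-up points, so computing automorphisms amounts to understanding which Weyl elements can be realized geometrically for special members of each family.

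For statement (1), since $\DPgm_3\cong\genpos{6}$ is $4$-dimensional and the generic stabilizer in $W(E_6)$ of a general point configuration is trivial (a general sextuple of points in $\PP^2$ admits no nontrivial projective self-map), the general cubic surface has no automorphisms; I would phrase this as the statement that the $W(E_6)$-cover $\DPgm_3\to\calC_3$ is generically \'etale. For statement (2), I would first recall that an Eckardt point is the common intersection of three of the $27$ lines, and the harmonic homology (projective involution) centered at such a point $p$ and fixing the tangent plane section induces an involution of $X$; the content is to check that this involution acts on $K_X^\perp=E_6$ as an element of Carter type $2A$ (a product of two commuting reflections), which I would verify by writing the three concurrent lines as roots $\alpha,\beta,\gamma$ and computing the induced reflection product explicitly, then counting that the generic such $X$ has exactly one Eckardt point by a dimension count on the locus $\DP_3^{\rm tp}$. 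Statement (3) follows from the same philosophy: for each degeneration type $*\in\{\rm n,c,2n,tn,3n\}$ the relevant cover of $\DPgm_3$ is positive-dimensional and the generic marked surface in the image is still a \emph{general} cubic in the sense of having trivial point-stabilizer, so no automorphisms survive — the key is to check that imposing a single (or double) anticanonical degeneration does not force an Eckardt point or other special symmetry.

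Statement (4) is where the real computation lies. I would use that a general degree $4$ Del Pezzo $X=\{Q_0=Q_1=0\}\subset\PP^4$ is a smooth complete intersection of two quadrics, and its automorphism group is the stabilizer of the pencil $\langle Q_0,Q_1\rangle$; the pencil has five singular members (whose parameters are the five branch points of the associated double cover, equivalently the five points blown up), and $\Aut(X)$ is the group of projective transformations simultaneously diagonalizing the pencil up to sign, which is $(\ZZ/2\ZZ)^4$ acting by sign changes on the five coordinates with product of signs trivial. To identify the induced action on $K_X^\perp=D_5$, I would match the five sign-involutions with the five simple degenerations and show that the $i$-th generator acts as $r_{\alpha_1}\circ r_{\alpha_i}$ in the canonical root basis $\{\alpha_1,\dots,\alpha_5\}$, verifying the relations and the order of the group by direct computation in $W(D_5)$. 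The main obstacle I anticipate is precisely this last identification: one must pin down the correct labelling of simple roots and check that the geometric sign-change involutions map to the stated products of reflections rather than to some conjugate set, which requires carefully tracking how the five blown-up points correspond to the coordinate hyperplanes and comparing with the explicit root data recorded in Section \ref{section:geom background}.
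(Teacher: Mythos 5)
Your general framework (for $d\le 5$ the automorphism group acts faithfully on the Picard lattice and is identified with the stabiliser in $W(\Phi_d)$ of the corresponding point of the marked moduli space) is correct, and your reductions for (3) and (4) are essentially sound: (3) does follow from (1) because each locus $\DP_3^{*}$ dominates $\DP_3$, and the diagonal-pencil description in (4) is exactly what the paper invokes via Dolgachev. Note, though, that the paper's own proof is almost entirely a set of citations (Tu for the structure of the Eckardt locus, Dolgachev's classification table for (1)--(3) and for $(\ZZ/2\ZZ)^4$ in (4), Dolgachev--Duncan for the conjugacy class), so your plan to reprove these facts is a genuinely different, and more laborious, route. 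For (1) in particular, "the $W(E_6)$-cover is generically \'etale'' is a restatement of the claim rather than an argument; one still needs either the classification table or an explicit cubic with trivial automorphism group together with irreducibility of $\DP_3$.

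There are two concrete errors in your treatment of (2). First, "$2A$'' is Atlas notation (the paper cites Conway et al.\ and Dolgachev--Duncan), not Carter notation: it is the class of $45$ involutions of Carter type $4A_1$, a product of \emph{four} commuting reflections, not two. Indeed the harmonic homology fixes $p$ together with a plane cubic curve, so its Lefschetz number is $1$, its trace on $K_X^\perp$ is $-2$, and its eigenvalues are $(+1)^2,(-1)^4$; the count $45$ matches the tritangent planes and the centraliser is $W(F_4)$ of order $1152$, consistent with Theorem \ref{theo:loo}. Relatedly, "writing the three concurrent lines as roots'' is a category error: lines are exceptional classes of square $-1$, not roots. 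Second, a bare dimension count on $\DP_3^{\rm tp}$ only shows that a $3$-dimensional space maps finitely onto the $3$-dimensional Eckardt divisor; this is compatible with the generic Eckardt cubic having several Eckardt points, and one really needs Tu's result that the Eckardt divisor is irreducible while the two-Eckardt-point locus has codimension two. Finally, in (4) beware that the single-coordinate sign changes act on $K_X^\perp$ with trace $-3$ (type $4A_1$), whereas $r_{\alpha_1}\circ r_{\alpha_i}$ has trace $+1$; the generators realising the stated elements are the sign changes in \emph{pairs} of coordinates, so the matching you defer to the end is where your proposed labelling would actually go wrong if done naively.
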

\begin{proof}
    From \cite{tu}, in the four dimensional moduli space $\mathcal{M}$ of cubics, the locus of surfaces having an Eckardt point is an irreducible divisor whereas the locus with two Eckardt points has codimension two and two irreducible components. From \cite[Theorem 9.5.8]{dolgachev}, a general such surface with an Eckardt point has exactly one automorphism, namely the one of \cite[Proposition 9.1.13]{dolgachev}. Such an automorphism is also called a reflection, and the induced element is of type $2A$, using the notation of \cite{conwayetal},
    according to \cite[\S 1.2]{dd}. The table in \cite[Theorem 9.5.8]{dolgachev} also proves the first and third claims.

     The final statement follows from \cite[\S 8.6.4]{dolgachev}. The action of $\Aut(X)$ on the space $K_X^\perp$ is easy to describe for a general $X$: for $L, E_1,\ldots, E_5$ a usual geometric basis for the Picard group, the four generators of $(\ZZ/2\ZZ)^4$ are the elements $r_{\alpha_1}\circ r_{\alpha_i}$, $i=2,3,4,5$, where $\alpha_i$ form the canonical root basis
    \begin{equation*}
        \alpha_1=L-E_1-E_2-E_3 \text{ and }\alpha_i=E_{i-1}-E_i\text{ for }i=2,3,4,5.
    \end{equation*}
    and $r_{\alpha_i}(v)=v+(\alpha_i.v)\alpha_i$ is the reflection in $\alpha_i$.
\end{proof}

\section{Hyperplane and toric arrangements}\label{section:arrangements}

In this section, we follow ideas of Looijenga \cite{looijenga} to establish isomorphisms between last section's moduli spaces of geometrically marked Del Pezzo surfaces along with a singular point of an anticanonical section and
complements of toric and hyperplane arrangements. As these isomorphisms follow in most cases verbatim, and in some with minor modifications, from the ones in \cite{looijenga}, in the following subsection we only outline the construction of the various arrangement spaces so as to fix notation and give the reader an idea of the arguments.

\subsection{Moduli of singular anticanonical sections}
Let $(X,A,p)\in\DP_3^{\rm n}$ be a marked cubic surface with a nodal anticanonical section as in the previous section. Consider the restriction homomorphism $$r_A:K_X^\perp\to J(A).$$ In Proposition \ref{propanticanonical} we gave an isomorphism $J(A)\cong\CC^*$. In fact there are precisely two such group homomorphisms, one being the inverse of the other. Composing with one of them, the map $r_A$ determines an element of $\Hom(K_X^\perp,\CC^*)\cong(\CC^*)^6$.
To be more precise let $L_{E_6}$ be the $E_6$ lattice (see e.g.\ \cite[8.2.2]{dolgachev}) which is, by our choice of a geometric marking, isometric to $K_X^\perp$. If we denote now by $T$ the torus $\Hom(L_{E_6},\CC^*)$, we have a $W(E_6)$-equivariant homomorphism
\begin{eqnarray*}
\DP_3^{\rm n} &\to& (\ZZ/2\ZZ)\backslash T \\
(X,A,p) &\mapsto& r_A.
\end{eqnarray*}
The smooth locus $A_{\rm{sm}}\subset A$ is isomorphic to $\Pic^1(A)$, which is in turn a $J(A)$-torsor.
Note now that if $\alpha\in L_{E_6}$ is a root, then $r_A(\alpha)\neq1$ (where here $1$ corresponds to the trivial element $\OO_A\in J(A)$) since otherwise the 6 points giving
the blowup description of $X$ would not be in general position, e.g.\ $r_A(e_1-e_2)=1$ if two of the points are the same,
$r_A(\ell-e_1-e_2-e_3)=1$ if the three points are collinear or
$r_A(2\ell -e_1- \cdots -e_6)$ if the six points lie on a conic. In particular we see that the kernel of the
map $f_\alpha:T\to\CC^*:\chi\mapsto \chi(\alpha)$, which is a hypertorus in $T$, is not contained in the image of $\DP_3^{\rm n}$ for any root $\alpha$. Note that the roots $\alpha,-\alpha$ both give the same hypertorus, so we need only consider positive roots, the set of which we denote by $\mathcal{R}^+$. For the computations, we note that $\ker f_\alpha=\{\chi\in T: \chi(v)=\chi(\operatorname{refl}_\alpha(v))\}$ is the fixed locus of the reflection in $\alpha$.
Finally, we denote the complement by $$T_{E_6}=T\setminus\bigcup_{\alpha\in\mathcal{R}^+} \ker f_\alpha.$$

If $(X,A,p)\in\DP_3^{\rm c}$ so that $A$ has a cusp at $p$, an isomorphism $J(A)\cong\CC$ is defined up to $\CC^*$, and so if $V=\Hom(L_{E_6},\CC)$ we get a map $\DP_3^{\rm c}\to\PP(V)$. Analogously to the nodal case, we have hyperplanes $\ker f_\alpha\subset V$ for $\alpha$ a positive root, for which we define $V_{E_6} = V\setminus\bigcup_\alpha \ker(f_\alpha)$ and note that the image of $\DP_3^{\rm c}$ lies inside $V_{E_6}/\CC^*$ which for ease of notation and to emphasise that it is a complement in a projective space we denote by $\PP'(V_{E_6})$.

If $(X,A,p)\in\DP_3^{\rm 2n}$ so that $A$ has two irreducible components $F_1,F_2$ meeting transversely at two points one of which is $p$ then\footnote{Note that there is a small mistake here in \cite[1.12]{looijenga} which does not affect the computations.} $J(A)\cong\CC^*$, and like in the nodal case, we must quotient by $\ZZ/2\ZZ$ as there are two such isomoprhisms. For example, by picking a suitable geometric marking we have $F_1=2L-\sum_{i=1}^5E_i$ and
$F_2=L-E_6$. One checks now that $\langle F_1, F_2\rangle^\perp\subset K_X^\perp$ is isometric to the $D_5$ lattice - call $f_i$ the image of $F_i$. Fixing $f_1$ with $f_1^2=-1$ the corresponding class $f_2$ so that $f_1+f_2=-k$ will have $f_2^2=0$ and can be represented by a pencil of rational curves (those residual in the 2-planes containing $F_1$). Every line $f_1$ will contribute one different torus. Note though that as we vary along the pencil, we obtain (at least generically) two distinct singular points $p_1,p_2\in A$, and the triples $(X,A,p_i)$ for $i=1,2$ both induce the same homomorphism $r_A$, so we must quotient the space $\DP_3^{\rm 2n}$ by the natural $\ZZ/2\ZZ$-action swapping the two singular points of $A$. In other words we will obtain the unmarked 2-nodal locus as a toric arrangement. Denote
$T(f_1)=\Hom(\langle f_1, f_2\rangle^\perp,\CC^*)$ and as in previous cases, we want to avoid characters which are trivial on positive roots, so we denote $T_{D_5}(f_1)=T(f_1)\setminus\cup_\alpha\ker f_\alpha$.

If $(X,A,p)\in\DP_3^{\rm tn}$ so that $A$ has two irreducible components $F_1,F_2$ meeting non-transversely at one point $p$ with multiplicity two, then the classes of the $F_i$ give an isometry $\langle f_1,f_2\rangle^\perp=L_{D_5}$ as in the case $\DP_3^{\rm 2n}$, so we obtain (following the notation of the previous two paragraphs) a point in $\PP'(V_{D_5}(e))$. Note that for every fixed $(-1)$-curve $F_1$, there are precisely two smooth rational curves of class $f_2$ which meet $F_1$ tacnodally, as can be seen by projecting from $F_1$.

If $(X,A,p)\in\DP_3^{\rm 3n}$ (resp. $\DP_3^{\rm tp}$), $A=F_1+F_2+F_3$ is the sum of three lines (e.g.\ $E_1,L-E_1-E_2,2L-\sum E_i+E_2$), then the classes $f_i$ satisfy $\langle f_1,f_2,f_3\rangle^\perp = L_{F_4}$ (cf.\
\cite[9.1.10]{dolgachev}). Note that $W(E_6)$ acts transitively on the set of tritangent trios \cite[9.1.9]{dolgachev} and a list of them (in terms of a chosen basis) can be found loc.\ cit. Analogously to the cases above we obtain that $r_A$ lies in $T_{F_4}(f_1)$ (resp. $\PP'(V_{F_4}(f_1))$). There is a $\ZZ/3\ZZ$-action permuting the role of $f_1,f_2,f_3$ which we must eventually quotient by.

In the following, denote by $\widehat{\DP_3^{\rm 2n}}$ (resp. $\widehat{\DP_3^{\rm 3n}}$) the quotient of the space by the natural $\ZZ/2\ZZ$-action (resp. $\ZZ/3\ZZ$) permuting the singular points of $A$. Analogous to \cite[1.7-1.15]{looijenga} we now obtain.
\begin{Theorem}\label{theo:loo}
    We have the following $W(E_6)$-equivariant isomorphisms
    \begin{align*}
        \DP_3^{\rm n}    \cong& (\ZZ/2\ZZ)\backslash T_{E_6}, & \DP_3^{\rm tn} &\cong \bigsqcup_{e^2=-1}\PP'(V_{D_5}(e)), \\
        \DP_3^{\rm c}    \cong& \PP'(V_{E_6}),              & \widehat{\DP_3^{\rm 3n}} &\cong \bigsqcup_{(f_1,f_2,f_3)}(\ZZ/2\ZZ)\backslash T_{F_4}(f_1)/(\ZZ/3\ZZ),  \\
        \widehat{\DP_3^{\rm 2n}} \cong& \bigsqcup_{e^2=-1}(\ZZ/2\ZZ)\backslash T_{D_5}(e),  & \DP_3^{\rm tp} &\cong  \bigsqcup_{(f_1,f_2,f_3)}\PP'(V_{F_4}(f_1))/(\ZZ/3\ZZ) .
    \end{align*}
\end{Theorem}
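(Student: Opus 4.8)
The plan is to follow the method of Looijenga \cite{looijenga} case by case, in each exhibiting mutually inverse $W(E_6)$-equivariant maps between the moduli space and the stated arrangement complement. The forward maps have already been described in the discussion preceding the statement: a triple $(X,A,p)$ yields the restriction homomorphism $r_A:K_X^\perp\to J(A)$, and after identifying $J(A)$ with $\GG_m$ or $\GG_a$ via Proposition \ref{propanticanonical} and using the isometry of $K_X^\perp$ (or of $\langle f_i\rangle^\perp$) with $L_{E_6}$, $L_{D_5}$ or $L_{F_4}$ supplied by the marking, this produces a point of the relevant torus $T$ or vector space $V$; in the additive cases the identification $J(A)\cong\GG_a$ is canonical only up to $\CC^*$, which is why one lands in the projectivisation $\PP'(V_{\bullet})$. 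That the image avoids each root hypertorus $\ker f_\alpha$ is exactly the computation $r_A(\alpha)\neq 1$ recorded before the statement, and needs only be repeated for each singularity type; this is the easy direction.

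First I would construct the inverse. In each stratum the curve $A$ is rigid up to isomorphism (the nodal and cuspidal cubics are unique, and the reducible types are fixed configurations of $\PP^1$'s), so all the moduli sit in the configuration of the six blown-up points on $A_{\rm sm}$. Given a homomorphism $\chi$ in the arrangement complement, the classes $e_1,\dots,e_6$ restrict to points $E_i|_A\in\Pic^1(A)\cong A_{\rm sm}$, which $\chi$ determines up to a simultaneous translation, while the values of $\chi$ on $\ell-e_i-e_j-e_k$ and $2\ell-\sum e_i$ pin down the hyperplane class $\ell|_A$ relative to them. Blowing up these six points (as points of the plane model of $A$) recovers a surface $X$ carrying $A$ as its anticanonical section with the prescribed singularity at $p$, and its induced marking returns $\chi$. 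The substance of the correspondence is the dictionary, read in reverse: the six points are in general position in $\PP^2$ — no two coincide, no three are collinear, and they do not all lie on a conic — precisely when $\chi$ avoids $\ker f_{e_i-e_j}$, $\ker f_{\ell-e_i-e_j-e_k}$ and $\ker f_{2\ell-\sum e_i}$ respectively, so that the two constructions are inverse by design.

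For the reducible strata one replaces $L_{E_6}$ by the orthogonal complement of the classes of the components of $A$, which is $L_{D_5}$ when $A$ has two components and $L_{F_4}$ when it has three. Fixing a $(-1)$-curve $e$ (cases $\rm{2n}$, $\rm{tn}$) or a tritangent trio $(f_1,f_2,f_3)$ (cases $\rm{3n}$, $\rm{tp}$) rigidifies the component data and yields a single torus $T_{D_5}(e)$ or $T_{F_4}(f_1)$, or its additive projectivised analogue; since $W(E_6)$ permutes the $27$ lines and the tritangent trios transitively \cite[9.1.9]{dolgachev}, summing over all such choices produces the disjoint unions of the statement, with $W(E_6)$ permuting the pieces. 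The residual quotients record the leftover ambiguities: the two inverse identifications $J(A)\cong\GG_m$ give the factor $(\ZZ/2\ZZ)\backslash$ in the multiplicative cases $\rm{n}$, $\rm{2n}$ and $\rm{3n}$, while the cyclic permutation of the three components of a triangle gives the factor $/(\ZZ/3\ZZ)$ in the cases $\rm{3n}$ and $\rm{tp}$. The hatted spaces $\widehat{\DP_3^{\rm 2n}}$ and $\widehat{\DP_3^{\rm 3n}}$ appear because the two (resp.\ three) singular points of a reducible section induce the same $r_A$, so one must first quotient by the corresponding permutation of singular points for the forward map to be well-defined.

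The main obstacle is the reconstruction step, i.e.\ verifying that an abstract element of the arrangement complement genuinely comes from a marked Del Pezzo surface of the required type. One has to check that the six points produced on $A_{\rm sm}$ — a priori determined only up to translation — blow up to a smooth surface on which $A$ is an honest anticanonical section with exactly the prescribed singularity at $p$, and that the resulting marking reproduces $\chi$. Making the equivalence between general position of the six points and avoidance of the root hypertori precise in the degenerate cases, where $A$ is a chain or cycle of rational curves and $J(A)$ is the group of its N\'eron model, is where the arguments of \cite[1.7--1.15]{looijenga} must be transcribed and, in the $\rm{2n}$ and $\rm{tn}$ cases, modified to correct the identification of $J(A)$ flagged in the footnote above.
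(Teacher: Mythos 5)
Your overall strategy is the paper's: the forward maps are the restriction homomorphisms already set up before the statement, and the content is the inverse construction, carried out following Looijenga with the non‑reduced cases deferred to \cite[1.8, 1.11, 1.13, 1.15]{looijenga}. The paper likewise only writes out one inverse in full — but the case it chooses to write out, $\widehat{\DP_3^{\rm 3n}}$ (with $\DP_3^{\rm tp}$ analogous), is exactly the case your sketch does not cover correctly, and it is the one genuinely new case not in Looijenga's degree‑2 setting.

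The issue is your reconstruction principle ``$\chi$ determines the points $E_i|_A\in\Pic^1(A)$ up to a simultaneous translation, and the values of $\chi$ on $\ell-e_i-e_j-e_k$ and $2\ell-\sum e_i$ pin down $\ell|_A$.'' This works when $\chi$ lives on the full rank‑6 lattice $K_X^\perp\cong L_{E_6}$ (cases n and c), but in the tritangent‑trio cases $\chi$ is a character of $\langle f_1,f_2,f_3\rangle^\perp\cong L_{F_4}$, which has rank $4$: not all differences $e_i-e_j$ lie in this sublattice (e.g.\ $e_5-e_4\notin L_{F_4}$ for the standard trio), so $\chi$ does \emph{not} determine the six points up to one overall translation. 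The paper's proof gets around this by working with an abstract triangle $A=F_1\cup F_2\cup F_3$ equipped with a $\ZZ/3\ZZ$-action: it places $P_1,\dots,P_4$ successively on $F_1$ using $\chi(e_{i}-e_{i-1})$, obtains $P_6\in F_2$ as the image of $P_1$ under the cyclic action, recovers $P_5$ as the image of the contracted component $F_3$ under the map $h:A\to\PP^2$ given by the linear system $|\OO_A(P_1+P_2+P_6)\otimes\phi(\chi(\ell-e_1-e_2-e_6))|$ (which embeds $F_1$ as a conic and $F_2$ as a line), and only then checks general position — where, e.g., the non‑collinearity of $P_i,P_j,P_6$ uses $\chi(\ell-e_i-e_j-e_6)\neq 1$ and the non‑collinearity of triples among $P_1,\dots,P_5$ is automatic because they lie on the conic $h(F_1)$. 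None of this is a routine transcription of your ``blow up the six points of the plane model'' recipe, so this case needs to be supplied; the rest of your outline matches the paper.
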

\begin{proof}
    We have already constructed the corresponding morphisms in the above paragraphs. We construct now the inverse in the case $\DP_3^{\rm 3n}$ as the case of a triple point is analogous, and the remaining ones have already been covered in the work of Looijenga \cite[1.8, 1.11, 1.13, 1.15]{looijenga} for degree 2 Del Pezzo surfaces, but follow mutatis mutandis in our case.

    Assume we are given a $\chi\in (\ZZ/2\ZZ)\backslash T_{F_4}(f_1)$, that is to say an element of $\Hom(L_{F_4},\CC^*)$ where we identify
    $L_{F_4}=\langle f_1,f_2,f_3\rangle^\perp$ for $f^2_i=-1$ so that $\chi(\alpha)\neq1$ for all positive roots $\alpha$ of $F_4$, and an action of $\ZZ/3\ZZ$ on $L_{F_4}$ permuting $f_1,f_2,f_3$. We want to construct a cubic surface with a geometric marking from this data. Given a geometric marking, the $f_i$ will correspond for example to the lines $2L-\sum_{i=1}^5 E_i, L-E_5-E_6, E_5$ respectively on a cubic surface, but recall that the Weyl group acts transitively on the set of tritangent trios.

    Consider an abstract curve $A$ which is a triangle configuration of three smooth rational curves $F_1,F_2,F_3$, along with a $\ZZ/3\ZZ$-action on $A$ which permutes the irreducible components through the marked singularity. Fix a group isomorphism $\phi:\CC^*\to J(A)$, and note that from Riemann-Roch we have an isomorphism $\psi:\Pic^1(A)\to A_{\rm{sm}}$. Pick a general point $P_1\in F_1\cap A_{\rm{sm}}$ and define $P_2$ to be the point $\psi(\OO_A(P_1)\otimes\phi(\chi(e_2-e_1))$. By acting by the cyclic group we can ensure $P_2\in F_1\cap A_{\rm sm}$. Similarly define two more points on $F_1\cap A_{\rm sm}$, $P_i=\psi(\OO_A(P_{i-1})\otimes\phi(\chi(e_i-e_{i-1}))$ for $i=3,4$. Finally, define $P_6\in F_2\cap A_{\rm sm}$ to be the image of $P_1$ under the $\ZZ/3\ZZ$-action. The linear system $|\OO_A(P_1+P_2+P_6)\otimes\phi(\chi(\ell-e_1-e_2-e_6))|$ defines a morphism $h:A\to\PP^2$ which embeds $F_1$ as a conic, $F_2$ as a line and contracts $F_3$. Our desired cubic surface will now be the blowup of $\PP^2$ at the images of the five points $P_i$ and at the point which is the image of $F_3$.

    What remains to be checked is that the six points in the plane - call them $P_i$ again from now on for simplicity - are in general position. Since $\chi(e_{i+1}-e_i)\neq1$ (as these are roots in $F_4$) for $i=1,2,3$, the points $P_1,\ldots,P_4$ are distinct, as is $P_6$ as it lies on a different component. Since these previous 5 points are smooth points of the image of $A$, they all differ from $P_5=h(F_3)$. The condition that all 6 of them do not lie on a conic is automatic as $P_1,\ldots,P_5$ already lie on the conic $h(F_1)$ whereas $P_6$ does not. Similarly, no three of $\{P_1,\ldots,P_5\}$ are collinear as they all lie on a conic. From the definition of the $P_i$ for $i\leq4$ we find that $|\OO_A(P_i+P_j+P_6)\otimes\phi(\chi(\ell-e_i-e_j-e_6))|$ is, for $1\leq i<j\leq4$, the same linear system as that inducing $h$ so $P_i,P_j,P_6$ are not collinear as we have twisted by the non-trivial value under $\chi$ of the root $\ell-e_i-e_j-e_6$. That $P_i,P_5,P_6$ are not collinear for $i\leq4$ is automatic as $P_5,P_6$ lie on the line $h(F_2)$ whereas the remaining $P_i$ do not.
\end{proof}

\subsection{Cohomology of arrangement complements}

We recall the definition, due to Dimca--Lehrer \cite[\S 3]{dimcalehrer}, that an
irreducible complex variety $X$ is \emph{minimally pure} if each cohomology group
$\HH_c^i(X, \mathbb{C})$ is a pure Hodge structure of weight
$2i-2\mathrm{dim}(X)$.
In a minimally pure variety, one can define what it means to be a \textit{minimally pure arrangement}, examples of which are toric
arrangements and arrangements of hyperplanes.
An important feature of minimally pure arrangements is that their complements
are minimally pure. There is also an explicit formula for the cohomology
of the complement of a minimally pure arrangement in terms of the intersection
poset. In order to state the result, we define
the equivariant Poincar\'e polynomial $P(X,t)$ at an automorphism $\sigma$ of
$X$ of finite order as
\begin{equation*}
    P(X,t)(\sigma) =
    \sum_{i \geq 0} \mathrm{Tr} \left(\sigma, \HH_c^i(X, \mathbb{C}) \right)t^i.
\end{equation*}

\begin{Theorem}[Macmeikan \cite{macmeikan}]\label{macmeikan}
 Let $\mathcal{A}=\{A_i\}_{i \in I}$ be a minimally pure arrangement
 in a minimally pure variety $X$ and let
 \begin{equation*}
     X_{\mathcal{A}} = X \setminus \bigcup_{i \in I} A_i
 \end{equation*}
 denote the complement. Let $\sigma$ be an automorphism of $X$ of finite order
 which stabilises $\mathcal{A}$ as a set and let $\mathcal{L}^{\sigma}$
 be the poset of intersections of elements of $\mathcal{A}$ which are fixed by
 $\sigma$ and let $\mu$ be its M\"obius function. Then
 \begin{equation*}
     P(X_{\mathcal{A}},t)(\sigma) =
     \sum_{Z \in \mathcal{L}^{\sigma}}
     \mu(Z)(-t)^{\mathrm{codim}(Z)}P(Z,t)(\sigma).
 \end{equation*}
\end{Theorem}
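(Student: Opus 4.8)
The plan is to establish Macmeikan's formula by expressing the equivariant Poincar\'e polynomial of the complement $X_{\mathcal{A}}$ through the long exact sequences in compactly supported cohomology associated to the stratification of $X$ by the arrangement, and then to reorganise the resulting alternating sum over strata using M\"obius inversion on the intersection poset $\mathcal{L}^{\sigma}$. The key input is that minimal purity rigidifies each cohomology group into a single weight, so that the entire computation can be carried out at the level of the equivariant Poincar\'e polynomial without any cancellation between strata occurring in different weights.

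\textbf{Reduction to additivity.} First I would recall that for a variety $X$ stratified into locally closed pieces $X = \bigsqcup_\alpha S_\alpha$, compactly supported cohomology is additive in the sense that the equivariant ``$E$-polynomial'' (or weighted Euler characteristic) satisfies
\begin{equation*}
    \chi_c(X)(\sigma) = \sum_\alpha \chi_c(S_\alpha)(\sigma),
\end{equation*}
where $\chi_c(\,\cdot\,)(\sigma) = \sum_i (-1)^i \operatorname{Tr}(\sigma, \HH_c^i(\,\cdot\,,\CC))$, provided $\sigma$ permutes the strata so that each stratum contributing is $\sigma$-stable (strata moved by $\sigma$ contribute zero trace). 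Applying this to the stratification of $X$ whose closed strata are exactly the intersections $Z \in \mathcal{L}$ of members of $\mathcal{A}$, the open stratum being $X_{\mathcal{A}}$ itself, gives a relation expressing $\chi_c(X_{\mathcal{A}})(\sigma)$ as $\chi_c(X)(\sigma)$ minus the contributions of the positive-codimension strata. The crucial point is that, because each $Z$ is again minimally pure and so is the locally closed stratum $Z^\circ = Z \setminus \bigcup_{Z' \subsetneq Z} Z'$, the trace of $\sigma$ on each individual cohomology group can be recovered from the polynomial invariant: minimal purity forces $\HH_c^i$ to sit in weight $2i - 2\dim$, so there is no collapse and the bookkeeping by degree $t^i$ is faithful.

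\textbf{M\"obius inversion.} Next I would pass from the locally closed strata $Z^\circ$ to the closed strata $Z$. Writing $P(Z^\circ,t)(\sigma)$ and $P(Z,t)(\sigma)$ for the respective equivariant Poincar\'e polynomials, minimal purity converts the additivity relation into the clean identity $P(Z,t)(\sigma) = \sum_{Z' \subseteq Z} P((Z')^\circ,t)(\sigma)$ over the sub-poset $\mathcal{L}^\sigma_{\leq Z}$, where the signs $(-1)^i$ are absorbed into the weight shift and reappear as the factor $(-t)^{\operatorname{codim}}$. M\"obius inversion on the poset $\mathcal{L}^{\sigma}$ then solves for the open stratum:
\begin{equation*}
    P(X_{\mathcal{A}},t)(\sigma) = \sum_{Z \in \mathcal{L}^{\sigma}} \mu(Z)\,(-t)^{\operatorname{codim}(Z)}\, P(Z,t)(\sigma),
\end{equation*}
which is precisely the claimed formula. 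Only $\sigma$-fixed intersections survive because, as noted, any stratum genuinely permuted by $\sigma$ contributes trace zero, so the sum is correctly indexed by $\mathcal{L}^\sigma$ rather than all of $\mathcal{L}$.

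\textbf{The main obstacle.} The hard part is not the combinatorial inversion, which is formal, but justifying that the equivariant long exact sequence in compactly supported cohomology \emph{degenerates in the expected way} once minimal purity is imposed, and that the weight-shift exactly produces the codimension exponent and the sign convention $(-t)^{\operatorname{codim}(Z)}$ uniformly across all strata. One must check that the Gysin-type connecting maps respect the $\sigma$-action and that purity (weights concentrated in a single degree per cohomological index) prevents the boundary maps from mixing contributions, so that the naive alternating count lifts to an identity of \emph{polynomials} rather than merely of Euler characteristics. This is exactly where the hypothesis that $\mathcal{A}$ is a minimally pure arrangement in a minimally pure variety is indispensable, and it is the content one imports wholesale from Dimca--Lehrer's weight analysis and Macmeikan's argument.
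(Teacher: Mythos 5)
The paper offers no proof of this statement: it is quoted directly from Macmeikan \cite{macmeikan} as an external result (a generalisation of the Orlik--Solomon formula), so there is nothing internal to compare your argument against. Judged on its own, your sketch follows the standard route --- additivity of the equivariant weight polynomial over the stratification of $X$ by the open strata $Z^{\circ}$ of the intersections, vanishing of traces on $\sigma$-orbits of strata of length greater than one (which is why the sum is indexed by $\mathcal{L}^{\sigma}$ with its own M\"obius function), and M\"obius inversion --- and it is essentially correct.

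Two points deserve sharpening. First, the ``clean identity'' $P(Z,t)(\sigma)=\sum_{Z'\subseteq Z}P((Z')^{\circ},t)(\sigma)$ is not literally what additivity gives: the relation that holds is $P(Z,t)(\sigma)=\sum_{Z'}(-t)^{\operatorname{codim}_{Z}(Z')}P((Z')^{\circ},t)(\sigma)$, and the cleanest way to see it is to perform the entire inversion on the equivariant weight polynomial $\sum_{i}(-1)^{i}\operatorname{Tr}\bigl(\sigma,\operatorname{Gr}^{W}\HH^{i}_{c}\bigr)$, which is additive over locally closed decompositions \emph{unconditionally} (by the long exact sequence of a pair and strictness of weight filtrations), and only at the very end use minimal purity of the closed strata $Z$ and of the complement $X_{\mathcal{A}}$ to convert weight polynomials into Poincar\'e polynomials, which is where the factor $(-t)^{\operatorname{codim}(Z)}$ appears. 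In particular, no degeneration of any long exact sequence is needed for the combinatorial identity itself; your ``main obstacle'' paragraph slightly misplaces where the hard analysis enters. Second, the genuinely substantive input is the minimal purity of the complement $X_{\mathcal{A}}$ (and of each $(Z')^{\circ}$, which is again an arrangement complement inside $Z'$); this is what requires the inductive Gysin-sequence and weight arguments of Dimca--Lehrer. Since the paper records this as a prior known fact immediately before the theorem, deferring it is legitimate, but it should be flagged as the one step that is not formal bookkeeping.
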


The above result is a generalization of the Orlik--Solomon formula for
hyperplane arrangements.
The poset $\mathcal{L}^{\sigma}$ quickly becomes very large (e.g.\ the poset
of the toric arrangement associated to $E_6$ contains $5079$ elements) and
computations by hand are therefore rarely an option.
There are however efficient algorithms and implementations in
the case of hyperplane arrangements, due to Fleischmann and Janiszczak
\cite{fleischmannjaniszczak_hyp}, and in the case of toric arrangements, due
to the first author, \cite{bergvalltor}. This has allowed us to compute
Tables~\ref{nodecohtable}-\ref{Ecohtable}. We should remark that the contents of Table~\ref{cuspcohtable} can be found, in a slightly different form, in \cite{fleischmannjaniszczak_hyp} and Table~\ref{nodecohtable} can be found in \cite{bergvalltor}.
Information relevant for the computation of Table~\ref{Ecohtable} resp. Table~\ref{tlcohtable} can also be found in \cite{fleischmannjaniszczak_hyp}
resp. \cite{bergvalltor}.

We recall the following construction, due to Looijenga~\cite{looijenga}, which
allows the computation of some unions of loci of the type $\DP_d^{*}$ as in Section \ref{section:anticanonicalsections}.
Let $\mathcal{A}$ be an arrangement of codimension $1$ subtori in a torus
  $T$ such that each element of $\mathcal{A}$ passes through the identity of $T$ and let $D=\bigcup_{Z \in \mathcal{A}} Z$. Let $T'$ be the blowup of
  $T$ in the identity and let $D'$ be the strict transform of $D$. Let $V$
  be the tangent space of the identity in $T$. We may then identify $\PP(V)$
  with the exceptional divisor in $T'$. Under this identification, let
  $D_V=D'\cap \PP(V)$. We then have
  \begin{equation*}
      T'\setminus D' = \left( T \setminus D \right) \sqcup \left( \PP(V) \setminus D_V \right).
  \end{equation*}

\begin{Lemma}(Looijenga, \cite[Lemma 3.6]{looijenga})
\label{blowuplemma}
  Let $\Gamma$ be a group stabilizing $\mathcal{A}$ as a set. There is then
  a $\Gamma$-equivariant exact sequence of mixed Hodge structures
  \begin{equation*}
     0 \to \HH^i(T'\setminus D') \to
     \HH^i(T \setminus D) \to
     \HH^{i-1}( \PP(V) \setminus D_V)(-1) \to 0
  \end{equation*}
\end{Lemma}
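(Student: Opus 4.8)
The plan is to run the Gysin (localization) long exact sequence for the smooth divisor $E:=\PP(V)\setminus D_V$ sitting inside the smooth variety $U:=T'\setminus D'$, whose open complement is exactly $T\setminus D$ by the decomposition recorded just above the statement. Since $T'$ is the blow-up of the torus at its identity and $D'$ is the strict transform of a union of codimension-one subtori, both $U$ and $E$ are smooth, $E$ has complex codimension one in $U$, and the localization sequence in the category of mixed Hodge structures reads
\[ \cdots \to \HH^{i-2}(\PP(V)\setminus D_V)(-1) \xrightarrow{\ \mathrm{Gys}\ } \HH^{i}(T'\setminus D') \to \HH^{i}(T\setminus D) \xrightarrow{\ \mathrm{Res}\ } \HH^{i-1}(\PP(V)\setminus D_V)(-1) \to \cdots, \]
the Tate twists being forced by the codimension. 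Every map here is natural, and since $\Gamma$ stabilises $\mathcal{A}$ it acts compatibly on $T'$, on $D'$, on the exceptional divisor and hence on all three cohomologies, so the whole sequence is $\Gamma$-equivariant. It therefore suffices to show that it degenerates into the asserted short exact sequences, i.e.\ that every Gysin map $\mathrm{Gys}\colon \HH^{i-2}(\PP(V)\setminus D_V)(-1)\to\HH^{i}(T'\setminus D')$ vanishes.

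By Dimca--Lehrer the toric-arrangement complement $T\setminus D$ and the projective hyperplane-arrangement complement $\PP(V)\setminus D_V$ are minimally pure, so $\HH^{k}$ of each is pure of weight $2k$ and of Tate type. Hence in the display the two right-hand terms are pure of weight $2i$, while the source of $\mathrm{Gys}$ is pure of weight $2i-2$. One is tempted to kill $\mathrm{Gys}$ purely by weights, but this does \emph{not} work formally: for a smooth $U$ the general bounds only place the weights of $\HH^{i}(U)$ in $[i,2i]$, and this does not exclude a weight-$(2i-2)$ class. Worse, $T'$ itself is not minimally pure, since the exceptional divisor contributes a type $(1,1)$ class to $\HH^{2}(T')$; so minimal purity of $U$ cannot simply be inherited from the ambient blow-up and must instead be \emph{produced} by the removal of $D'$.

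This is exactly where the geometry enters, and it is the step I expect to be the main obstacle. The key point is the vanishing of the cycle class of the exceptional divisor on $U$. Writing $E_{\mathrm{exc}}=\PP(V)$ for the exceptional divisor of $\pi\colon T'\to T$, and picking any subtorus $Z\in\mathcal{A}$ through the identity, the multiplicity-one relation $\pi^{\ast}[Z]=[\widetilde{Z}]+[E_{\mathrm{exc}}]$ in $\HH^{2}(T')$ gives $[E_{\mathrm{exc}}]=\pi^{\ast}[Z]-[\widetilde{Z}]$. Restricting to $U=T'\setminus D'$, the strict transform $\widetilde{Z}$ is a component of the removed divisor $D'$, so $[\widetilde{Z}]\!\mid_{U}=0$; and $[Z]=0$ already in $\HH^{2}(T)$, because the cohomology of the torus is pure of top weight and carries no weight-two divisor classes. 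Hence $[E]=[E_{\mathrm{exc}}]\!\mid_{U}=0$ in $\HH^{2}(U)$. Combining this with the projection formula $\iota_{\ast}(\iota^{\ast}\gamma)=\gamma\cup[E]$ and the surjectivity of the restriction $\iota^{\ast}\colon\HH^{\ast}(U)\to\HH^{\ast}(E)$ to the exceptional flat (which holds for these Orlik--Solomon-type algebras), every Gysin map is seen to vanish. This forces the long exact sequence to split into the desired short exact sequences and, as a by-product, shows that $U=T'\setminus D'$ \emph{is} minimally pure even though $T'$ is not.

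Finally, since $[E]=0$, the surjectivity of $\iota^\ast$, and the entire localization sequence are all natural for the $\Gamma$-action that permutes the subtori of $\mathcal{A}$, the resulting short exact sequence is $\Gamma$-equivariant, as claimed. The only genuinely non-formal ingredient is the cycle-class vanishing of the previous paragraph; everything else is the standard functorial machinery of Gysin sequences of mixed Hodge structures together with the strictness argument on weights.
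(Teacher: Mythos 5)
The paper offers no proof of this lemma at all (it is quoted from Looijenga), so your argument has to stand on its own, and most of it does: the Gysin sequence for the smooth divisor $E=\PP(V)\setminus D_V$ inside $U=T'\setminus D'$ is the right framework, your observation that a naive weight argument cannot kill the Gysin maps (since $\HH^i(U)$ is a priori only constrained to weights in $[i,2i]$) is correct and shows you understand where the content lies, and the cycle-class computation $[E]=\pi^*[Z]-[\widetilde{Z}]$, restricting to zero on $U$ because divisor classes on a torus vanish and $\widetilde{Z}\subset D'$, is correct and cleanly argued.

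The genuine gap is the parenthetical ``(which holds for these Orlik--Solomon-type algebras)''. The surjectivity of $\iota^*\colon\HH^*(U)\to\HH^*(E)$ is not a formal property of arrangement complements; it is precisely the geometric heart of the lemma, and your closing claim that the cycle-class vanishing is ``the only genuinely non-formal ingredient'' misplaces the weight of the argument. The missing step is this: for $Z_\alpha=\ker f_\alpha\in\mathcal{A}$ the function $u_{\alpha\beta}=(f_\alpha-1)/(f_\beta-1)$ has divisor $\widetilde{Z_\alpha}-\widetilde{Z_\beta}$ on $T'$ (each factor vanishes to order one at the identity, so each total transform is strict transform plus $\PP(V)$ with multiplicity one), hence $u_{\alpha\beta}$ is a unit on $U$; its restriction to the exceptional divisor is the ratio $\ell_\alpha/\ell_\beta$ of the corresponding linear forms on $V$, so the classes $d\log u_{\alpha\beta}$ restrict to the degree-one generators $d\log(\ell_\alpha/\ell_\beta)$ of $\HH^1(\PP(V)\setminus D_V)$. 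Since by Brieskorn--Orlik--Solomon the ring $\HH^*(\PP(V)\setminus D_V)$ is generated in degree one, $\iota^*$ is surjective, and your projection-formula argument then closes the proof. Note that this surjectivity is equivalent to the surjectivity of the residue map $\HH^i(T\setminus D)\to\HH^{i-1}(\PP(V)\setminus D_V)(-1)$, which is essentially how Looijenga's own proof runs; your route via $[E]=0$ is a clean repackaging of the same idea, but without the displayed lifting of generators it does not yet constitute a proof.
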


If $\mathcal{A}$ is the toric arrangement associated to a root system,
then $\PP(V) \setminus D_V$ is the projectivization of the corresponding
arrangement of hyperplanes. Thus, in this case one can obtain
the cohomology of $T'\setminus D'$ by computing the cohomology of the complement of a toric arrangement and the cohomology of the complement of a hyperplane arrangement.

Let $\Phi$ be a root system. Let $T_{\Phi}$ be the complement of the toric arrangement associated to $\Phi$, let $V_{\Phi}$ be the complement of the hyperplane arrangement associated to $\Phi$ and let $T'_{\Phi}$ be the
variety obtained from $T_{\Phi}$ by blowing up the ambient torus in the identity, as described above. Completely analogously to \cite[Propositions 1.17, 1.18]{looijenga} we have isomorphisms as follows.

\begin{Proposition}
\label{unionprop}
 There are $W(E_6)$-equivariant isomorphisms
 \begin{equation*}
     \begin{array}{lcl}
     \DP_3^{\rm n} \bigsqcup \DP_3^{\rm c} & \cong & (\ZZ/2\ZZ)\setminus T_{E_6}' \\
     \widehat{\DP_3^{\rm 2n}} \bigsqcup \DP_3^{\rm tn}  & \cong &
     \bigsqcup_{W(E_6)/W(D_5)} (\ZZ/2\ZZ)\setminus T_{D_5}' \\
     \widehat{\DP_3^{\rm 3n}} \bigsqcup \DP_3^{\rm tp} & \cong &
      \bigsqcup_{W(E_6)/W(F_4)} (\ZZ/2\ZZ)\setminus T_{F_4}'/(\ZZ/3\ZZ).
      \end{array}
 \end{equation*}
\end{Proposition}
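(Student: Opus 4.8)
The plan is to combine the decomposition of the blown-up torus complement recorded just before the statement, Looijenga's blow-up Lemma \ref{blowuplemma}, and the identifications of the individual loci established in Theorem \ref{theo:loo}. The key observation is that the three right-hand sides of Proposition \ref{unionprop} are each built from $T_\Phi'$ for $\Phi=E_6,D_5,F_4$, and that the defining decomposition $T'\setminus D' = (T\setminus D)\sqcup(\PP(V)\setminus D_V)$ already splits $T_\Phi'$ into its toric part $T_\Phi$ and its projective-hyperplane part $V_\Phi$. So on the level of varieties the statement is essentially a bookkeeping exercise: match the toric piece with the nodal-type locus and the hyperplane piece with the cuspidal/tangential-type locus.

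First I would treat the $E_6$ case, which is the cleanest. By Theorem \ref{theo:loo} we have $\DP_3^{\rm n}\cong(\ZZ/2\ZZ)\backslash T_{E_6}$ and $\DP_3^{\rm c}\cong\PP'(V_{E_6})$, and the latter is by definition the projectivized hyperplane complement $V_{E_6}/\CC^*$. Since the $\ZZ/2\ZZ$-action (inverting the chosen isomorphism $J(A)\cong\CC^*$) is induced by the torus inversion $\chi\mapsto\chi^{-1}$, which fixes the identity and hence descends to the blow-up $T_{E_6}'$ compatibly with the decomposition, the disjoint union $\DP_3^{\rm n}\sqcup\DP_3^{\rm c}$ is identified $W(E_6)$-equivariantly with $(\ZZ/2\ZZ)\backslash(T_{E_6}\sqcup\PP(V)\setminus D_V) = (\ZZ/2\ZZ)\backslash T_{E_6}'$. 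The content to verify is that the geometric degeneration of a nodal section to a cuspidal one (as the point $\chi$ approaches the identity of $T$) corresponds exactly to the blow-up of the torus at the identity; this is the geometric heart of Looijenga's construction and transfers to our setting essentially verbatim.

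Next I would handle the $D_5$ and $F_4$ cases in parallel. Here the extra ingredient is the disjoint union over cosets: a $(-1)$-curve $e$ with $e^2=-1$ (respectively a tritangent trio $(f_1,f_2,f_3)$) determines a sublattice $\langle f_i\rangle^\perp\cong L_{D_5}$ (respectively $L_{F_4}$), and $W(E_6)$ permutes these choices transitively with stabilizer $W(D_5)$ (respectively $W(F_4)$), giving the index sets $W(E_6)/W(D_5)$ and $W(E_6)/W(F_4)$. Fixing one such choice, Theorem \ref{theo:loo} identifies $\widehat{\DP_3^{\rm 2n}}$ with $(\ZZ/2\ZZ)\backslash T_{D_5}(e)$ and $\DP_3^{\rm tn}$ with $\PP'(V_{D_5}(e))$, and similarly in the $F_4$ case with the additional $\ZZ/3\ZZ$-quotient permuting $f_1,f_2,f_3$. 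Applying the same blow-up decomposition and checking that the finite group actions ($\ZZ/2\ZZ$ from inversion, $\ZZ/3\ZZ$ from cyclically permuting the components) commute with the blow-up at the identity and with the $W(E_6)$-action on the coset index, one assembles the two remaining isomorphisms.

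The main obstacle I expect is not any single hard computation but the careful verification that the degeneration geometry is compatible with the blow-up in all cases, together with tracking the finite quotients through it. Concretely, one must confirm that as a nodal (respectively two-nodal, three-nodal) section acquires a cusp (respectively tacnode, triple point), the induced character $r_A$ tends to the identity of the relevant torus along the direction recorded by the corresponding limiting hyperplane class, so that the exceptional $\PP(V)$ in the blow-up really does parametrize the $\GG_a$-type degenerate loci. Since the individual loci were already pinned down in Theorem \ref{theo:loo} and the abstract decomposition is given, this amounts to checking that the stratification of $\DP_3^{\rm a}$ by singularity type matches the toric-versus-exceptional stratification of $T_\Phi'$; as remarked in the excerpt, this follows \emph{mutatis mutandis} from \cite[Propositions 1.17, 1.18]{looijenga}, so the work is in confirming that Looijenga's degree-2 arguments survive the passage to degree 3 without new phenomena.
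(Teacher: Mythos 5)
Your proposal is correct and follows essentially the same route as the paper: the paper's proof consists precisely of combining the identifications of the individual loci from Theorem \ref{theo:loo} with the decomposition $T'\setminus D' = (T\setminus D)\sqcup(\PP(V)\setminus D_V)$ of the blown-up arrangement complement, and then invoking that the gluing of the nodal-type and cuspidal-type strata matches the blow-up ``completely analogously to \cite[Propositions 1.17, 1.18]{looijenga}''. Your additional observations (that inversion acts trivially on the exceptional $\PP(V)$, and that the coset index sets arise from the transitive $W(E_6)$-action on $(-1)$-curves resp.\ tritangent trios) are exactly the bookkeeping the paper leaves implicit.
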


If we recall Theorem \ref{theo:loo}
and apply Lemma~\ref{blowuplemma} to Proposition~\ref{unionprop}
we obtain the following.

\begin{Corollary}
 The cohomology groups of $\DP_3^{\rm n} \sqcup \DP_3^{\rm c},
 \DP_3^{\rm 2n} \sqcup \DP_3^{\rm tn}, \DP_3^{\rm 3n} \sqcup \DP_3^{\rm tp}$ are all pure
 of type $(i,i)$.
 Moreover, in the representation ring of $W(E_6)$,
 the following equalities hold
 \begin{equation*}
 \begin{array}{lcl}
      \HH^i(\DP_3^{\rm n} \bigsqcup \DP_3^{\rm c}) & = &
      \HH^i(\DP_3^{\rm n}) - \HH^{i-1}(\DP_3^{\rm c}) \\
      \HH^{i}(\DP_3^{\rm 2n} \bigsqcup \DP_3^{\rm tn}) & = &
      \HH^{i}(\DP_3^{\rm 2n}) - \HH^{i-1}(\DP_3^{\rm tn}) \\
      \HH^{i}(\DP_3^{\rm 3n} \bigsqcup \DP_3^{\rm tp}) & = &
      \HH^{i}(\DP_3^{\rm 3n}) - \HH^{i-1}(\DP_3^{\rm tp}).
\end{array}
 \end{equation*}
\end{Corollary}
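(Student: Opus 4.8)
The plan is to combine the geometric decomposition of Proposition~\ref{unionprop} with Looijenga's blowup exact sequence from Lemma~\ref{blowuplemma}, together with the purity information already established. First I would recall that by Theorem~\ref{theo:loo} and Proposition~\ref{unionprop} each of the three unions $\DP_3^{\rm n}\sqcup\DP_3^{\rm c}$, $\widehat{\DP_3^{\rm 2n}}\sqcup\DP_3^{\rm tn}$ and $\widehat{\DP_3^{\rm 3n}}\sqcup\DP_3^{\rm tp}$ is $W(E_6)$-equivariantly isomorphic to a blown-up toric arrangement complement $T'_\Phi$ (modulo the relevant finite quotients), where $\Phi=E_6,D_5,F_4$ respectively. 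The point of passing to the blowup $T'$ is precisely that the decomposition $T'\setminus D'=(T\setminus D)\sqcup(\PP(V)\setminus D_V)$ realises the nodal-type locus as the open torus part and the cuspidal/tacnodal-type locus as the exceptional hyperplane-arrangement part.

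The first substantive step is to establish the purity claim. Since $T_\Phi$ is the complement of a toric arrangement associated to a root system, it is a minimally pure arrangement complement in the sense recalled before Theorem~\ref{macmeikan}, hence its cohomology is minimally pure; the same holds for the hyperplane-arrangement complement $V_\Phi$ and its projectivisation $\PP(V)\setminus D_V$. By Lemma~\ref{blowuplemma} the cohomology of $T'_\Phi$ sits in an exact sequence with the pure pieces coming from $T_\Phi$ and $\PP(V)\setminus D_V$, and a Tate twist by $(-1)$ shifts weights so that everything remains concentrated in type $(i,i)$. Taking the relevant finite quotients by $\ZZ/2\ZZ$ (and $\ZZ/3\ZZ$) preserves this property since cohomology of a quotient is the invariant part. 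I would therefore conclude that $\HH^i$ of each of the three unions is pure of type $(i,i)$.

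The second step extracts the representation-theoretic identity. Here the key observation is that the finite-quotient group actions and the $W(E_6)$-action commute, so Lemma~\ref{blowuplemma} is an exact sequence not merely of mixed Hodge structures but of $W(E_6)$-representations (on the appropriate invariants). Because every group in sight is finite and we work over $\QQ$ or $\CC$, the sequence splits and in the representation ring of $W(E_6)$ we obtain
\begin{equation*}
    \HH^i(T'_\Phi)=\HH^i(T_\Phi)-\HH^{i-1}(\PP(V)\setminus D_V)(-1).
\end{equation*}
Translating back through the isomorphisms of Theorem~\ref{theo:loo}, the torus part is the nodal-type locus and the exceptional part is the cuspidal-type locus, which yields precisely the three stated equalities $\HH^i(\DP_3^{\rm n}\sqcup\DP_3^{\rm c})=\HH^i(\DP_3^{\rm n})-\HH^{i-1}(\DP_3^{\rm c})$ and its $D_5$, $F_4$ analogues. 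The Tate twist $(-1)$ is invisible at the level of the underlying $W(E_6)$-representation (it only shifts the Hodge weight), so it does not appear in the representation-ring formula.

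The main obstacle I anticipate is the bookkeeping of the finite quotients and the identification of the two summands with the correct geometric loci. One must check that the $\ZZ/2\ZZ$-action (swapping the two sheets coming from the two group isomorphisms $J(A)\cong\CC^*$, or the two singular points in the $\rm{2n}$ case) and the $\ZZ/3\ZZ$-action permuting the three components in the $\rm{3n}$/$\rm{tp}$ case are compatible with Looijenga's torus-versus-exceptional decomposition, so that passing to invariants commutes with the splitting of the exact sequence. Once the actions are verified to respect the decomposition $T'\setminus D'=(T\setminus D)\sqcup(\PP(V)\setminus D_V)$, the minimal purity forces the sequence to split equivariantly and the three identities follow; the purity statement itself is then immediate from the weight concentration of the arrangement complements.
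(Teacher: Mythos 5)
Your proposal is correct and follows essentially the same route as the paper: the paper's proof consists precisely of applying Looijenga's short exact sequence (Lemma~\ref{blowuplemma}) to the identifications of Proposition~\ref{unionprop}, identifying the torus part with the nodal-type locus and the exceptional part with the cuspidal-type locus via Theorem~\ref{theo:loo}, and reading off purity from minimal purity of the two arrangement complements together with the Tate twist. The only cosmetic remark is that the splitting of the sequence is not needed for the identity in the representation ring --- exactness of the three-term sequence of Lemma~\ref{blowuplemma} already gives the alternating-sum relation --- but this does not affect correctness.
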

Note now the following theorem of Wells which we will use repeatedly.
\begin{Theorem}(Wells, \cite{wells})\label{wells}
Let $\pi:X\to Y$ be a surjective proper morphism of complex quasiprojective smooth varieties. Then $\pi$ induces an injection $\pi^*:\HH^i(Y,\CC)\to\HH^i(X,\CC)$.
\end{Theorem}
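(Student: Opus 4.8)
The plan is to prove a statement attributed to Wells about proper surjective morphisms of smooth quasiprojective varieties inducing injections on cohomology. The cleanest approach I would take is via mixed Hodge theory and the existence of a transfer-type splitting, though since $\pi$ need not be finite, a genuine trace map is unavailable and one must instead exploit the strictness of morphisms of mixed Hodge structures together with a cycle-theoretic or Poincar\'e-duality argument.

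First I would reduce to the case where $\pi$ is generically finite. Since $\pi$ is surjective, choose a subvariety $Y'\subset X$ of the same dimension as $Y$ mapping generically finitely and dominantly onto $Y$; after resolving and taking an alteration of $Y'$ one obtains a smooth projective-over-$Y$ variety $\tilde Y'$ with a generically finite proper map $\tilde Y' \to Y$ factoring (birationally) through $X$. The point of this reduction is that a proper generically finite dominant morphism of smooth varieties of the same dimension does admit a degree trace on cohomology: composing pushforward and pullback yields multiplication by the generic degree on $\HH^i(Y,\CC)$, which is an isomorphism over $\CC$, hence $\pi^*$ is injective. The subtlety is that pushforward on Betti cohomology requires Poincar\'e--Lefschetz duality, which for noncompact smooth varieties exchanges ordinary and compactly-supported cohomology; so I would phrase the trace using the Gysin map $\pi_! : \HH^i_c(X) \to \HH^i_c(Y)$ dual to $\pi^*$ on ordinary cohomology, and track weights carefully.

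The key steps, in order, are: (1) functoriality and the compatibility $\pi_! \circ \pi^* = \deg(\pi)\cdot \mathrm{id}$ on the generically finite model; (2) the observation that $\pi^*$ is a morphism of mixed Hodge structures, hence strict, so that its kernel is itself a sub-mixed-Hodge-structure; (3) concluding injectivity from the nonvanishing of the degree in characteristic zero. To pass from the generically finite model back to the original $\pi$, I would use that the induced maps on cohomology are compatible with the factorization $\tilde Y' \to X \to Y$: if the composite $\tilde Y'\to Y$ induces an injection on $\HH^i(Y)$, then a fortiori the first map $X\to Y$ in any such factorization does, since $\pi^*$ is a left factor of an injective map. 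This last point is the cleanest way to avoid dealing with the failure of properness or equidimensionality of $\pi$ itself.

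The main obstacle I expect is the absence of a trace map in the naive form, precisely because $\pi$ can raise dimension and need not be proper in a way compatible with ordinary (rather than compactly-supported) cohomology. Handling this requires either the alteration/resolution reduction above to manufacture an equidimensional generically finite cover, or a more structural argument using the decomposition theorem for the derived pushforward $R\pi_* \QQ_X$ (which, for $\pi$ proper, splits off $\QQ_Y$ as a direct summand up to a shift, immediately yielding the injection). I would in fact favour the decomposition-theorem route as the conceptually shortest: properness of $\pi$ gives that $R\pi_*\QQ_X$ is a direct sum of shifted perverse sheaves, and surjectivity ensures the constant sheaf $\QQ_Y$ appears as a summand, so $\HH^i(Y)$ is a direct summand of $\HH^i(X)$ and $\pi^*$ is split injective. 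The delicate part is verifying that surjectivity alone (without smoothness of the fibres) guarantees the relevant summand is present, which is where I would need to invoke the hard Lefschetz package or the explicit Wells argument rather than wave at it.
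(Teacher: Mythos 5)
Your proof is essentially correct, but note that the paper offers no proof of this statement: it is imported wholesale from Wells's 1974 article, whose own argument is analytic (fibre integration of smooth forms, comparing de Rham and Dolbeault cohomology as in the title), so any algebraic argument is necessarily a ``different route''. Your primary argument is the standard one and is complete: cut $X$ by generic hyperplane sections to obtain a closed subvariety $Y'\subset X$ of dimension $\dim Y$ dominating $Y$ (properness of $\pi$ makes $Y'\to Y$ proper, hence surjective and generically finite), resolve it --- alterations are unnecessary in characteristic zero --- to get a proper, surjective, generically finite $f\colon \tilde Y'\to Y$ between smooth varieties of the same dimension, and use the trace identity $f_*f^*=\deg(f)\cdot\mathrm{id}$, valid on quasiprojective smooth varieties once $f_*$ is defined as the Poincar\'e dual of $f^*$ on compactly supported cohomology ($f$ being proper); since $f^*=j^*\circ\pi^*$ for $j\colon\tilde Y'\to X$, injectivity of $f^*$ forces injectivity of $\pi^*$. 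Two cosmetic remarks: the map you denote $\pi_!\colon \HH^i_c(X)\to \HH^i_c(Y)$ preserves cohomological degree only when $\dim X=\dim Y$, which is the only situation in which you actually invoke it; and the strictness of morphisms of mixed Hodge structures contributes nothing to injectivity, since the trace identity already works over $\QQ$. Your decomposition-theorem variant is also sound and conceptually shorter, and the point you flag as delicate is resolvable without hard Lefschetz: over a dense open subset of $Y$ where $\pi$ is a topological fibre bundle, the trivial rank-one local system splits off $R^0\pi_*\QQ_X$ simply because the fibres are nonempty, and its intermediate extension is the shifted constant sheaf because $Y$ is smooth, so $\HH^i(Y)$ is a direct summand of $\HH^i(X)$ via $\pi^*$. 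What the elementary trace argument buys is independence from the decomposition theorem; what the latter buys is the stronger conclusion that $\pi^*$ is split injective.
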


\begin{Corollary}
\label{dp3puritycor}
 The cohomology groups $\HH^i(\DP_3,\CC)$ are pure of type $(i,i)$
 and $\DP_3/\overline{\FF}_q$ is minimally pure in the sense of Definition \ref{defn:minpure}.
\end{Corollary}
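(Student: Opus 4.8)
The plan is to deduce both assertions from the single statement that $\HH^i(\DP_3,\CC)$ is pure of type $(i,i)$. Since $\DP_3\cong\genpos{6}$ is smooth of dimension $4$, Poincar\'e duality gives $\HH^i(\DP_3)\cong\HH^{8-i}_c(\DP_3)^\vee(-4)$, so purity of type $(i,i)$ for all $i$ is equivalent to $\HH^j_c(\DP_3)$ being pure of weight $2j-8$ for all $j$; translating through the comparison isomorphisms used in the proof of Theorem \ref{theo:cohdp4}, this is exactly minimal purity of $\DP_3/\overline{\FF}_q$ in the sense of Definition \ref{defn:minpure}. So I concentrate on showing that $\HH^i(\DP_3)$ is pure of type $(i,i)$.

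First I would record that $\DP_3^{\rm a}$ is smooth and proper over $\DP_3$. Indeed, a hyperplane section $A$ of a smooth cubic $X$ is singular at $p$ precisely when the cutting hyperplane equals the tangent plane $T_pX$; hence $A=T_pX\cap X$ is determined by $p$, and $(X,p)\mapsto(X,A,p)$ identifies $\DP_3^{\rm a}$ with the universal marked cubic surface $\{(X,p):p\in X\}$ over $\DP_3$. This is a smooth projective family, so $\DP_3^{\rm a}$ is smooth and the forgetful map $f:\DP_3^{\rm a}\to\DP_3$ is proper; it is surjective since every smooth cubic has singular anticanonical sections. Theorem \ref{wells} then yields an injection of mixed Hodge structures $f^*:\HH^i(\DP_3)\hookrightarrow\HH^i(\DP_3^{\rm a})$.

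Next I would exploit the open--closed decomposition $\DP_3^{\rm a}=U\sqcup Z$ with $U=\DP_3^{\rm n}\sqcup\DP_3^{\rm c}$ and $Z$ the locus of reducible sections and sections with several singular points. By the previous corollary $\HH^\bullet(U)$ is pure of type $(\bullet,\bullet)$, while $Z$ is stratified by the minimally pure arrangement complements $\DP_3^{\rm 2n},\DP_3^{\rm tn},\DP_3^{\rm 3n},\DP_3^{\rm tp}$ of Theorem \ref{theo:loo}, each of dimension at most $5=\dim Z<6=\dim\DP_3^{\rm a}$. From minimal purity of these strata and the excision long exact sequence one gets that $\HH^k_c(Z)$ has weights $\ge 2k-2\dim Z$, whence by Poincar\'e--Lefschetz duality $\HH^m_Z(\DP_3^{\rm a})\cong\HH^{12-m}_c(Z)^\vee(-6)$ has weights $\le 2m-2$. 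Consequently the kernel of the restriction $j^*:\HH^i(\DP_3^{\rm a})\to\HH^i(U)$, which is the image of $\HH^i_Z(\DP_3^{\rm a})$ in $\HH^i(\DP_3^{\rm a})$, has weight $<2i$, so $j^*$ embeds $\mathrm{Gr}^W_{2i}\HH^i(\DP_3^{\rm a})$ into the pure structure $\HH^i(U)$.

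The remaining, and main, difficulty is to promote this to an injection $\HH^i(\DP_3)\hookrightarrow\HH^i(U)$, i.e.\ to show $f^*\HH^i(\DP_3)\cap\ker j^*=0$, so that the non-pure part of $\HH^i(\DP_3^{\rm a})$ carries none of the pulled-back classes. The mechanism I expect to use is that $f^*$-classes are pulled back from the base and hence restrict trivially to every fibre of $f$, whereas the classes in $\ker j^*$ are supported on the horizontal divisor $Z$, which meets each fibre in a proper closed subvariety and on which such classes restrict nontrivially; phrasing this through the Leray filtration of the smooth proper morphism $f$ together with the weight bound above, or equivalently through the direct comparison of the mixed Hodge structures of $\DP_3^{\rm a}$, $U$ and $Z$ in the spirit of Looijenga, is where the real work lies. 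Granting it, $\HH^i(\DP_3)$ injects into the pure structure $\HH^i(U)$ and is therefore itself pure of type $(i,i)$, and the reduction of the first paragraph then finishes the proof.
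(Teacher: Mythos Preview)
Your argument has a genuine gap, which you yourself flag with ``Granting it''. The weight bound you establish on $\ker j^*$ (weight $\le 2i-2$) would imply $f^*\HH^i(\DP_3)\cap\ker j^*=0$ \emph{only if} you already knew that $\HH^i(\DP_3)$ sits in weight $\ge 2i$ --- but that is exactly the conclusion you are trying to reach, so the weight route is circular. Your alternative suggestion via the Leray filtration of $f$ is not fleshed out: you assert that classes supported on $Z$ restrict nontrivially to fibres, but this is not obvious (a class can be in the image of $\HH^i_Z$ and still vanish on each fibre), and making it precise would require substantially more work than the statement warrants.

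The paper sidesteps all of this by choosing a different forgetful map. Instead of the six-dimensional $\DP_3^{\rm a}\to\DP_3$, it uses the map
\[
\DP_3^{\rm 3n}\sqcup\DP_3^{\rm tp}\longrightarrow\DP_3,
\]
which is \emph{finite} surjective (there are finitely many tritangent trios on each cubic). Finiteness gives an injection $\HH^i(\DP_3)\hookrightarrow\HH^i(\DP_3^{\rm 3n}\sqcup\DP_3^{\rm tp})$ directly, and the target is already known to be pure of type $(i,i)$ by the preceding corollary, so the conclusion is immediate --- no stratification or weight analysis of an intermediate space is needed. The same one-line argument handles the $\ell$-adic statement. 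The moral is that among the various covers in Section~\ref{section:anticanonicalsections}, the four-dimensional one with a finite map to the base is the right tool here; the six-dimensional $\DP_3^{\rm a}$ is the wrong choice precisely because its fibres are positive-dimensional, forcing you into the unresolved step.
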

\begin{proof}
    The forgetful morphism $\DP_3^{\rm 3n} \sqcup \DP_3^{\rm tp} \to
    \DP_3$ is finite and thus induces an injection
    of mixed Hodge structures $\HH^i(\DP_3) \to  \HH^{i}(\DP_3^{\rm 3n} \sqcup \DP_3^{\rm tp})$.
    Since the latter is pure
    of type $(i,i)$, the same is true for
    $\HH^{i}(\DP_3)$.
    Similarly, we get an injection $\rm{H}_{\et,c}^i(\DP_3,\Qell) \to
    \rm{H}_{\et,c}^i(\DP_3^{\rm 3n} \sqcup \DP_3^{\rm tp},\Qell)$.
    Since $\Frob$ acts with all eigenvalues equal to $q^{i-4}$ on the latter,
    the same is true for its action on the subspace
    $\rm{H}_{\et,c}^i(\DP_3,\Qell)$.
\end{proof}

\section{Proof of Theorem \ref{theo:cohdp3}}\label{section:mainproof}

The following will be used repeatedly.
\begin{Lemma}(Looijenga, \cite[Lemma 4.1]{looijenga})
\label{gysinlemma}
Let $X$ be a variety of pure dimension and let $Y \subset X$ be a hypersurface.
Assume furthermore that both $X$ and $Y$ are rational homology manifolds.
Then there is a Gysin exact sequence of mixed Hodge structures
\begin{equation*}
\cdots \to \HH^{k-2}(Y)(-1) \to \HH^{k}(X) \to \HH^{k}(X\setminus Y) \to \HH^{k-1}(Y)(-1) \to \cdots
\end{equation*}
\end{Lemma}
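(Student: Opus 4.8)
The plan is to establish the Gysin exact sequence for the pair $(X, Y)$ where $Y \subset X$ is a hypersurface and both are rational homology manifolds. The natural starting point is the long exact sequence of the pair in compactly supported cohomology (or equivalently, the long exact sequence relating $X$, the open complement $U = X \setminus Y$, and the closed subset $Y$). First I would recall that for a closed inclusion $i : Y \hookrightarrow X$ with open complement $j : U \hookrightarrow X$, there is a canonical long exact sequence of mixed Hodge structures
\begin{equation*}
\cdots \to \HH^k_Y(X) \to \HH^k(X) \to \HH^k(U) \to \HH^{k+1}_Y(X) \to \cdots,
\end{equation*}
where $\HH^k_Y(X)$ denotes cohomology with supports in $Y$. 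The entire content of the lemma is then to identify the local cohomology groups $\HH^k_Y(X)$ with the shifted and Tate-twisted cohomology $\HH^{k-2}(Y)(-1)$ via a Thom–Gysin isomorphism, which is precisely where the rational-homology-manifold hypotheses enter.

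The key steps, in order, are as follows. Step one: invoke the long exact sequence of the pair in the category of mixed Hodge structures; on smooth varieties this is classical (Deligne), and for rational homology manifolds it holds because such spaces satisfy Poincaré duality with $\QQ$-coefficients and carry functorial mixed Hodge structures on their rational cohomology. Step two: prove the purity/Thom isomorphism $\HH^k_Y(X) \cong \HH^{k-2}(Y)(-1)$. For $Y$ a smooth divisor in a smooth $X$ this is the standard Gysin isomorphism, with the Tate twist $(-1)$ recording the codimension-one normal bundle; the shift by $2$ is the real codimension. When $X$ and $Y$ are only rational homology manifolds, one replaces the tubular neighbourhood argument by the observation that purity still holds $\QQ$-locally: a rational homology manifold of dimension $n$ has $\HH^k_{\{x\}} = \QQ$ concentrated in degree $2n$, so the local cohomology sheaf $\mathscr{H}^k_Y(\QQ_X)$ is concentrated in the single degree $k = 2\,\mathrm{codim}(Y) = 2$ and is a rank-one local system, giving the desired isomorphism after passing to hypercohomology. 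Step three: substitute this identification into the long exact sequence, rename $U = X \setminus Y$, and reindex to obtain exactly the displayed four-term stretch
\begin{equation*}
\cdots \to \HH^{k-2}(Y)(-1) \to \HH^{k}(X) \to \HH^{k}(X\setminus Y) \to \HH^{k-1}(Y)(-1) \to \cdots.
\end{equation*}

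The main obstacle is Step two in the singular (rational homology manifold) setting: one cannot simply quote the smooth Gysin sequence, and must instead verify that the Thom isomorphism and the compatibility with mixed Hodge structures survive when $X$ and $Y$ have at worst quotient-type singularities. The cleanest route is to reduce to the smooth case by the standard device of passing to a resolution, or more directly to use that a rational homology manifold is, rationally, indistinguishable from a manifold for the purposes of duality and local cohomology — so that Poincaré–Lefschetz duality identifies $\HH^k_Y(X)$ with Borel–Moore homology $\HH^{BM}_{2n-k}(Y)$, which by duality on $Y$ (itself a rational homology manifold of dimension $n-1$) is $\HH^{2n-k-2(n-1)}(Y) = \HH^{2-k+2n-2n}(Y)$, matching the claim after accounting for the weight. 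Making the Hodge-theoretic twist precise, rather than just the underlying $\QQ$-vector space statement, is the delicate point; here I would appeal to the functoriality of Deligne's mixed Hodge structures and the fact that the Gysin map is a morphism of mixed Hodge structures of type $(1,1)$, which pins down the twist as $(-1)$. Since in the applications of this paper $X$ and $Y$ arise as quotients of smooth arrangement complements by finite groups — hence are genuine rational homology manifolds of the mildest kind — all of these compatibilities hold, and the sequence is moreover automatically $\Gamma$-equivariant for any finite group acting compatibly.
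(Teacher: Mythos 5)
The paper does not prove this lemma at all: it is quoted verbatim from Looijenga's paper (his Lemma 4.1), so there is no internal proof to compare against. Your outline — the local-cohomology long exact sequence $\cdots\to\HH^k_Y(X)\to\HH^k(X)\to\HH^k(X\setminus Y)\to\HH^{k+1}_Y(X)\to\cdots$ combined with the purity isomorphism $\HH^k_Y(X)\cong\HH^{k-2}(Y)(-1)$, valid because both $X$ and $Y$ are rational homology manifolds — is exactly the standard argument and is essentially Looijenga's own; the approach is correct. One small slip: in your duality computation you write $\HH^{BM}_{2n-k}(Y)\cong\HH^{2n-k-2(n-1)}(Y)=\HH^{2-k}(Y)$, whereas Poincar\'e duality on the $(n-1)$-dimensional rational homology manifold $Y$ gives $\HH^{BM}_{j}(Y)\cong\HH^{2(n-1)-j}(Y)$, i.e.\ $\HH^{BM}_{2n-k}(Y)\cong\HH^{k-2}(Y)$ — the degree you actually need; the error is only a transposition in the subtraction and does not affect the argument.
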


As an application of the above and our analysis in Section \ref{section:anticanonicalsections}, we obtain the following key Lemma.

\begin{Lemma}\label{comparelemma}
There are $W(E_6)$-equivariant inclusions of mixed Hodge structures
from $\HH^i(\DP_3)$ to the $i$-th cohomology of any of the following spaces $
\DP_3^{\rm n} \sqcup \DP_3^{\rm c},\
\DP_3^{\rm 2n} \sqcup \DP_3^{\rm tn},\
\DP_3^{\rm 3n} \sqcup \DP_3^{\rm tp},\
\DP_3^{\rm n},\
\DP_3^{\rm c},\
\DP_3^{\rm 2n},\
\DP_3^{\rm tn},\
\DP_3^{\rm 3n}.
$
\end{Lemma}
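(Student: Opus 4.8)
The plan is to realise each of the eight target spaces $S$ as a Zariski-open stratum of a smooth variety (or rational homology manifold) $\overline{S}$ carrying a proper surjective morphism $\pi\colon\overline{S}\to\DP_3$ whose complement $\overline{S}\setminus S$ is a union of smooth locally closed subvarieties of positive codimension. Granting such a model, the inclusion $\HH^i(\DP_3)\hookrightarrow\HH^i(S)$ would follow uniformly: Wells' Theorem~\ref{wells} gives an injection $\pi^*\colon\HH^i(\DP_3)\hookrightarrow\HH^i(\overline{S})$ (when $\pi$ is finite this is just the transfer $\tfrac{1}{\deg\pi}\pi_*\pi^*=\mathrm{id}$ with $\QQ$-coefficients), and by Corollary~\ref{dp3puritycor} its image is pure of weight $2i$. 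Restricting to the open part $S$ and applying the localization sequences (Lemma~\ref{gysinlemma} in codimension one and its evident higher-codimension analogue), the kernel of $\HH^i(\overline{S})\to\HH^i(S)$ is generated by Gysin images of classes on the smooth boundary strata $Z_\alpha$ of codimension $c_\alpha\ge 1$, which lie in $\HH^{i-2c_\alpha}(Z_\alpha)(-c_\alpha)$ and therefore have weight $\le 2i-2c_\alpha<2i$. Hence the restriction stays injective on the weight-$2i$ subspace, in particular on $\pi^*\HH^i(\DP_3)$; as every construction is $W(E_6)$-equivariant, this produces the required equivariant inclusions of mixed Hodge structures.

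The models would come from the geometry of Section~\ref{section:anticanonicalsections}. The observation I would start from is that a hyperplane section $A=H\cap X$ of an anticanonically embedded cubic is singular at $p$ precisely when $H$ is the embedded tangent plane $T_pX$; thus $A$ is determined by $p$, and the forgetful map is a $W(E_6)$-equivariant isomorphism $\DP_3^{\rm a}\cong\mathcal{X}$ onto the universal cubic surface. In particular $\DP_3^{\rm a}$ is smooth and proper over $\DP_3$, the $1$-nodal locus $\DP_3^{\rm n}$ is its open dense stratum, and the irreducible-section locus $\DP_3^{\rm n}\sqcup\DP_3^{\rm c}$ is open (an irreducible plane cubic carries a single node or cusp, so this is exactly the irreducibility locus); taking $\overline{S}=\mathcal{X}$ then settles both $\DP_3^{\rm n}$ and $\DP_3^{\rm n}\sqcup\DP_3^{\rm c}$. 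For the two-component loci I would use $\overline{S}=\bigsqcup_\ell\mathcal{E}_\ell$, the disjoint union over the $27$ lines of the universal line $\mathcal{E}_\ell$ (a $\PP^1$-bundle over a finite cover of $\DP_3$, hence smooth and proper); writing a section through $\ell$ as $\ell+C$ with $C$ the residual conic, the locus where $C$ is irreducible is open and equals $\DP_3^{\rm 2n}\sqcup\DP_3^{\rm tn}$, while demanding in addition that $C$ meet $\ell$ transversally cuts out the open $\DP_3^{\rm 2n}$. Finally, by the counts of tritangent trios and tangent conics the forgetful maps from $\DP_3^{\rm 3n}\sqcup\DP_3^{\rm tp}$ and from $\DP_3^{\rm tn}\sqcup\DP_3^{\rm tp}$ are finite (degrees $135$ and $54$), so transfer handles $\DP_3^{\rm 3n}\sqcup\DP_3^{\rm tp}$ outright, and restricting to the open strata whose complement is the codimension-one locus $\DP_3^{\rm tp}$ yields $\DP_3^{\rm 3n}$ and $\DP_3^{\rm tn}$.

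The genuinely delicate case is $\DP_3^{\rm c}$. Its fibres over $\DP_3$ are positive-dimensional, so no finite model is available, and inside $\DP_3^{\rm n}\sqcup\DP_3^{\rm c}$ it is closed rather than open, so the Gysin sequence merely computes its cohomology instead of embedding $\HH^i(\DP_3)$ into it. The model I would use is its closure in $\mathcal{X}$, the universal parabolic curve $\overline{\DP_3^{\rm c}}=\DP_3^{\rm c}\sqcup\DP_3^{\rm tn}\sqcup\DP_3^{\rm tp}$ (the locus where the tangent section has Jacobian $\GG_a$), in which $\DP_3^{\rm c}$ is open with boundary the positive-codimension strata $\DP_3^{\rm tn}\sqcup\DP_3^{\rm tp}$. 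The hard part will be verifying that this model is smooth, or at least a rational homology manifold, which reduces to a transversality statement for the universal Hessian; granting that, together with the smoothness and minimal purity of the boundary strata supplied by the arrangement descriptions of Section~\ref{section:arrangements}, the uniform argument of the first paragraph applies and finishes the proof.
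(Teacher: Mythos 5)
Your proposal is correct and follows essentially the same route as the paper: realise each stratum as an open piece of a space proper over $\DP_3$, inject $\HH^i(\DP_3)$ into the cohomology of that space via Theorem~\ref{wells}, and use the purity of Corollary~\ref{dp3puritycor} together with Lemma~\ref{gysinlemma} and the minimal purity of the boundary strata to see that the weight-$2i$ part survives restriction to the open stratum. The paper writes out only the case $\DP_3^{\rm 2n}\sqcup\DP_3^{\rm tn}$ (with boundary $\DP_3^{\rm 3n}\sqcup\DP_3^{\rm tp}$) and declares the remaining cases ``similar''; your explicit choices of proper models and your flagging of the smoothness question for the closure of $\DP_3^{\rm c}$ go, if anything, beyond what the paper records.
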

\begin{proof}
   Some of the cases are explained above. We give a proof only in the case of $\HH^{i}(\DP_3^{\rm 2n} \sqcup \DP_3^{\rm tn})$, the others being similar.

   Let $U=\DP_3^{\rm 2n} \sqcup \DP_3^{\rm tn}$,
   $Y=\DP_3^{\rm 3n} \sqcup \DP_3^{\rm tp}$ and let $X=U \sqcup Y$.
   We apply Lemma~\ref{gysinlemma} and get an exact sequence of
   mixed Hodge structures
   \begin{equation*}
 \cdots \to \HH^{i-2}(Y)(-1) \to \HH^{i}(X) \to \HH^{i}(U) \to \HH^{i-1}(Y)(-1) \to \cdots  .
\end{equation*}
Since both $\HH^{i}(Y)$ and $\HH^{i}(U)$ are pure of type $(i,i)$ we
see that $\HH^i(X)$ can consist of at most two parts - one of type $(i,i)$
coming from $\HH^{i}(U)$
and one of type $(i-1,i-1)$ coming from $\HH^i(Y)$. The forgetful morphism $X \to \DP_3$ is proper so from Theorem \ref{wells} we get an inclusion of mixed Hodge structures
\begin{equation*}
    \HH^i(\DP_3) \to \HH^i(X).
\end{equation*}
But by Corollary~\ref{dp3puritycor} we know that $\HH^i(\DP_3)$
is pure of type $(i,i)$ so the image of the above injection must lie in
the $(i,i)$ part of $\HH^i(X)$ which is in turn contained in $\HH^i(U)$.
\end{proof}

By using the above inclusion to compare the $S_6$-equivariant information
about $\HH^i(\DP_3)$ with the $W(E_6)$-equivariant information about
$\HH^i(\DP_3^{\rm c})$ we can deduce the
$W(E_6)$-equivariant structure of $\HH^0(\DP_3)$, $\HH^1(\DP_3)$ and $\HH^2(\DP_3)$. However, this information only leaves us
with two possibilities for $\HH^3(\DP_3)$ (and several for
$\HH^4(\DP_3)$). More precisely
we have that $\HH^3=\phi_{15}^5+\phi_{90}^8+\chi_{80}$ where $\chi_{80}$ is a
$80$-dimensional representation equal to one of
\begin{equation*}
    \phi_{20}^{10}+\phi_{60}^8 \quad \text{or} \quad \phi_{80}^7.
\end{equation*}
The other inclusions in cohomology
from Lemma~\ref{comparelemma} give further restrictions on the multiplicities of each irreducible representation in $\HH^i(\DP_3)$. This reduces the number of possibilities for $\HH^4(\DP_3)$ to $8$ but fails to give any new information about $\HH^3(\DP_3)$. We have $\HH^4=\phi_{10}^9+\chi_{140}$ where $\chi_{140}$ is a 140-dimensional representation equal to one of the following eight representations
\begin{equation*}
    \begin{array}{ll}
        \phi_{80}^7 + \phi_{30}^3 + \phi_{30}^{15} &  \phi_{80}^7 + \phi_{15}^{4} + \phi_{15}^5 + \phi_{15}^{16}+\phi_{15}^{17} \\
        \phi_{80}^7 + \phi_{30}^{15} + \phi_{15}^4 + \phi_{15}^5 &  \phi_{80}^7 + \phi_{30}^3 + \phi_{15}^{16} + \phi_{15}^{17} \\
        \phi_{60}^8 + \phi_{20}^{10} + \phi_{30}^{15} + \phi_{15}^{4} + \phi_{15}^{5}  
        &  \phi_{60}^8 + \phi_{20}^{10} + \phi_{30}^{3} + \phi_{15}^{16} + \phi_{15}^{17} \\
        \phi_{60}^8 + \phi_{20}^{10} + \phi_{15}^{4} + \phi_{15}^5 + \phi_{15}^{16} + \phi_{15}^{17} &  
        \phi_{60}^8 + \phi_{20}^{10} + \phi_{30}^3 + \phi_{30}^{15}.\\
    \end{array}
\end{equation*}

We now make the following simple observation. We know that
\begin{equation*}
    |(\DP_3)^{\Frob \sigma}| = \sum_{i=0}^4 \mathrm{Tr}(\sigma,\HH^i_\et(\DP_3,\Qell))(-q)^{4-i}
\end{equation*}
and we know $\mathrm{Tr}(\sigma,\HH^i_\et(\DP_3,\Qell))$ for
$i=0,1,2$ and, for each $\sigma \in W(E_6)$, we have at most two
different possibilities for $\mathrm{Tr}(\sigma,\HH^3_\et(\DP_3,\Qell))$
and at most eight different possibilities for $\mathrm{Tr}(\sigma,\HH^4_\et(\DP_3,\Qell))$.
We may thus plug in the different values for
$\mathrm{Tr}(\sigma,\HH^3_\et(\DP_3,\Qell))$ and $\mathrm{Tr}(\sigma,\HH^4_\et(\DP_3,\Qell))$ and see what we get.
The result should be a polynomial counting the number of fixed points
of an automorphism of a variety over $\FF_q$ but 
in all $8$ cases where 
$\HH^3=\phi_{15}^5+\phi_{90}^8+\phi_{20}^{10}+\phi_{60}^8$
we get negative results at $q=2,3$ and $5$. 
We therefore conclude that $\HH^3=\phi_{15}^5+\phi_{90}^8+\phi_{80}^{7}$.

To compute $\HH^4$ we observe that $\DP_3$ is a $W(D_5)$-equivariant fibration
over $\DP_4$ (by forgetting the last point in the
blow-up picture). Let $F$ denote the fiber. For a complex variety $X$, denoting $$\chi(X)(g)=\sum_{i=0}^{\dim X}(-1)^i\rm{Tr}(g, \HH^k(X,\QQ)),$$
from the above we conclude that
the Euler characteristics satisfy
\begin{equation*}
    \chi(\DP_3)(g) = \chi(\DP_4)(g) 
    \cdot \chi(F)(g).
\end{equation*}
In particular, if $\chi(\DP_4)(g)=0$ then $\chi(\DP_3)(g)=0$. 
We know the cohomology of 
$\DP_4$ as a $W(D_5)$-representation so computing
$\chi(\DP_4)(g)$ is immediate. We see that $6$ classes
are such that $\chi(\DP_4)(g)=0$ (two consisting of elements of order $2$, two consisting of elements of order $4$, one consisting of elements of order $6$ and one consisting of elements of order $12$). When considered as elements of $W(E_6)$, $4$ out of these classes contain elements coming from
$S_6$ (and we thus already know everything about them from the point count) but two (one of order 4 and the one of order 12) are ``new''. Computing the Euler
characteristic at these classes using each of our $8$ candidates for $\HH^4$ gives
that $4$ take value $\pm 8$ at the class containing elements of order $12$. After removing these candidates, our possibilities for $\chi_{140}$ are
\begin{equation*}
    \begin{array}{ll}
        \phi_{80}^7 + \phi_{30}^3 + \phi_{30}^{15} &  
        \phi_{80}^7 + \phi_{15}^{4} + \phi_{15}^5 + \phi_{15}^{16}+\phi_{15}^{17} \\
        \phi_{60}^8 + \phi_{20}^{10} + \phi_{15}^{4} + \phi_{15}^5 + \phi_{15}^{16} + \phi_{15}^{17} &  
        \phi_{60}^8 + \phi_{20}^{10} + \phi_{30}^3 + \phi_{30}^{15}\\
    \end{array}
\end{equation*}
Using the fact that $\DP_3 \to \DP_4$ is defined
over $\mathbb{Z}$ (in fact, that it is defined over $\mathbb{R}$ is enough)
we can extend the action of $W(D_5)$ to an action of $\mathbb{Z}/2\mathbb{Z} \times W(D_5)$ by letting $\mathbb{Z}/2\mathbb{Z}$ act by complex conjugation (see \cite{BergvallJFT} for details). Denote the element $(1,g)$ by $\overline{g}$.
By minimal purity, for $d=3,4$ we have
\begin{equation*}
    \chi(\DP_d)(\overline{g}) = \sum_{i=0}^{\mathrm{dim}(\DP_d)} \mathrm{Tr}(g, \HH^i(\DP_d, \mathbb{C})),
\end{equation*}
(this essentially just says that complex conjugation acts on $\HH^k$ as $(-1)^k$ which is a consequence of $\HH^k$ having pure Tate type). 
We again use the multiplicativity of the Euler characteristic
but now evaluate at $\overline{g}$. There are $8$ classes in $W(D_5)$
such that $\chi(\DP_4)(\overline{g})=0$ out of which $3$ are not from $S_6$ when viewed as elements in $W(E_6)$. Of our four candidates, $3$ give
nonzero values at one of these classes (it is not the same class for all three though). We finally conclude that
\begin{equation*}
    \HH^4 = \phi_{10}^9 + \phi_{80}^7 + \phi_{30}^3 + \phi_{30}^{15}.
\end{equation*}

\section{Quartic Del Pezzo surfaces with anticanonical curves}\label{section:quarticpairs}

 Recall the decomposition
 \begin{equation*}
\begin{array}{cc}
    \DP_4^{\rm a} =& \DP_4^{\rm n} \sqcup \DP_4^{\rm c} \sqcup \DP_4^{\rm 2n} \sqcup \DP_4^{\rm tn} \sqcup \DP_4^{\rm 3n} \sqcup \DP_4^{\rm tp} \sqcup \DP_4^{\rm 4n}.
\end{array}
\end{equation*}
In this section we will show how to compute the cohomology groups of the various loci above, using hyperplane and toric arrangements like in the cubic case in Section \ref{section:arrangements}. The resulting computations are all presented in the second Appendix.

\subsection{Irreducible anticanonical curves}
Let $S$ be a Del Pezzo surface of degree $4$. Let $A$ be an anticanonical curve.
The orthogonal complement $A^\perp\subset\Pic(S)$ gives a root system of type $D_5$.
From Lemma \ref{auts}, the automorphism group of a general $S$ is $(\ZZ/2\ZZ)^4$.
If $A$ is nodal we have $\Jac(A) \cong \CC^*$ and if $A$ is cuspidal we have
$\Jac(A) \cong \CC$. Following Looijenga \cite{looijenga} as in the proof of Theorem \ref{theo:loo}
we now have the following.

\begin{Proposition}
 There are $W(D_5)$-equivariant isomorphisms
 \begin{equation*}
     \begin{array}{lcl}
     \DP_4^{\rm n} & \cong & (\ZZ/2\ZZ)^4 \setminus T_{D_5} \\
      \DP_4^{\rm c}  & \cong & (\ZZ/2\ZZ)^4 \setminus \PP(V_{D_5}).
      \end{array}
 \end{equation*}
\end{Proposition}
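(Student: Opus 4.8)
The plan is to follow the construction in the proof of Theorem~\ref{theo:loo} essentially verbatim, the only genuinely new feature being the automorphisms of $S$. First I would attach to a marked triple $(S,A,p)\in\DP_4^{\rm n}$ the restriction homomorphism $r_A\colon K_S^\perp\to\Jac(A)$. Since $A^\perp\subset\Pic(S)$ is of type $D_5$, a geometric marking identifies $K_S^\perp$ with the lattice $L_{D_5}$, and composing $r_A$ with the isomorphism $\Jac(A)\cong\CC^*$ of Proposition~\ref{propanticanonical} produces a character in $T=\Hom(L_{D_5},\CC^*)$. Exactly as in the nodal cubic case, if $\alpha$ is one of the $D_5$-roots $e_i-e_j$ or $\ell-e_i-e_j-e_k$ then $r_A(\alpha)\neq1$, since otherwise the five blown-up points would fail to be in general position; hence the character lands in the toric-arrangement complement $T_{D_5}$. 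In the cuspidal case $\Jac(A)\cong\CC$ is defined only up to scaling, so the same recipe lands in $\PP(V_{D_5})$.

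Next I would identify the ambiguity in this assignment. By Lemma~\ref{auts} a general degree $4$ Del Pezzo surface has $\Aut(S)\cong(\ZZ/2\ZZ)^4$, acting on $K_S^\perp$ through the reflections $r_{\alpha_1}\circ r_{\alpha_i}$. Two geometrically marked triples differing by an element $\psi\in\Aut(S)$ represent the \emph{same} point of $\DP_4^{\rm n}$, and $\psi$ alters $r_A$ by its induced action on $T$, which combines the lattice action on $L_{D_5}$ with the action of $\psi$ on $\Jac(A)$. Together with the two choices of isomorphism $\Jac(A)\cong\CC^*$, this shows $r_A$ is well defined only modulo a finite group of transformations of $T$; granting for the moment that this group is exactly $(\ZZ/2\ZZ)^4$ and that it preserves $T_{D_5}$, one obtains a well-defined morphism
\begin{equation*}
\DP_4^{\rm n}\longrightarrow (\ZZ/2\ZZ)^4\setminus T_{D_5},
\end{equation*}
and similarly $\DP_4^{\rm c}\to(\ZZ/2\ZZ)^4\setminus\PP(V_{D_5})$. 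In the cuspidal case the scaling ambiguity of $\Jac(A)\cong\CC$ is absorbed by the projectivization, so there the only remaining quotient is manifestly by $\Aut(S)$.

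To produce the inverse I would reconstruct a marked surface from a character, copying the argument of Theorem~\ref{theo:loo}. Given $\chi\in T_{D_5}$, fix a group isomorphism $\CC^*\cong\Jac(A)$ for an abstract irreducible nodal genus-one curve $A$, use $\Pic^1(A)\cong A_{\rm{sm}}$ to place five points on $A_{\rm{sm}}$ via the values $\chi(e_i-e_j)$ and $\chi(\ell-e_i-e_j-e_k)$, and blow up $\PP^2$ at the images of these points under the morphism defined by the appropriate anticanonical linear system realizing $A$ as a plane cubic. The conditions $\chi(\alpha)\neq1$ on the $D_5$-roots translate precisely into the five points being in general position, so the output is a genuine degree $4$ Del Pezzo surface with nodal anticanonical section; the cuspidal case is identical with $\CC^*$ replaced by $\CC$. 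The $W(D_5)$-equivariance is built into the construction, since changing the marking by $w\in W(D_5)$ permutes all the data by $w$.

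The hard part will be pinning the quotient group down to exactly $(\ZZ/2\ZZ)^4$. In contrast to the cubic case, where $S$ has no automorphisms and one must quotient separately by the inversion coming from the two isomorphisms $\Jac(A)\cong\CC^*$, here I must show that this inversion is already subsumed by the $\Aut(S)$-action. Since $-\mathrm{id}\notin W(D_5)$ the inversion is invisible on the lattice alone, so the crux is to analyse the sign character $\Aut(S)\to\{\pm1\}$ recording how the involutions act on $\Jac(A)$ and to verify that the combined action on $T$ accounts for the inversion; this is the degree $4$ analogue of the fact that in degree $2$ the Geiser involution acts as $-\mathrm{id}$. Establishing this, together with the faithfulness of the induced $(\ZZ/2\ZZ)^4$-action on $T_{D_5}$ and $\PP(V_{D_5})$, is exactly what upgrades the two forgetful morphisms above to the asserted isomorphisms.
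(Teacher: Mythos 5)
Your proposal follows exactly the route the paper takes: its entire proof of this Proposition consists of noting that $A^\perp\subset\Pic(S)$ is of type $D_5$, that $\Aut(S)\cong(\ZZ/2\ZZ)^4$ for a general $S$ (Lemma \ref{auts}), and that $\Jac(A)\cong\CC^*$ (resp.\ $\CC$) in the nodal (resp.\ cuspidal) case, and then invoking Looijenga's construction verbatim as in the proof of Theorem \ref{theo:loo}. The one issue you single out as the ``hard part'' --- verifying that the inversion ambiguity of $\Jac(A)\cong\CC^*$ is genuinely absorbed into the $\Aut(S)$-identifications so that the quotient group is exactly $(\ZZ/2\ZZ)^4$ and nothing larger, which is delicate precisely because $-\mathrm{id}\notin W(D_5)$ --- is indeed the only point where the degree $4$ case differs from the cubic one, and the paper leaves it just as implicit as you do.
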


Using these descriptions, the same ideas as in Section \ref{section:mainproof} and the methods developed in \cite{fleischmannjaniszczak_hyp} and \cite{bergvalltor}
we compute the cohomology groups of $\DP_4^{\rm n}$ and $\DP_4^{\rm c}$
in the second Appendix. We remark that, quite unexpectedly,
$\DP_4^{\rm c}$ has zero fourth cohomology group. We will elaborate on this in Remark \ref{rem:samecoh}.

\subsection{Anticanonical curves with two components}
If an anticanonical curve $A$ consists of two components $A_1$ and $A_2$ these components must
have classes of the following three types
\begin{equation*}
    \begin{array}{lcllcl}
(1)\,         A_1 & = & 2L-E_1-\cdots-E_5, & A_2 & = & L  \\
(2)\,         A_1 & = & 2L-E_1-\cdots-E_5+E_i, & A_2 & = & L-E_i \\
(3)\,         A_1 & = & 2L-E_1-\cdots-E_5+E_i+E_j, & A_2 & = & L-E_i-E_j.
    \end{array}
\end{equation*}
In cases (1) and (3), $\langle A_1,A_2\rangle^\perp$ is a root system of type $A_4$ while in case (2) we get a root system of type $D_4$. The Weyl group $W(D_5)$ acts transitively on the anticanonical curves giving root systems of the same type. We thus see that
$\DP_4^{\rm 2n}$ and $\DP_4^{\rm tn}$ decompose further as
\begin{equation*}
    \DP_4^{\rm 2n} = \DP_4^{\rm 2n,A_4} \sqcup \DP_4^{\rm 2n,D_4}, \quad
     \DP_4^{\rm tn} = \DP_4^{\rm tn,A_4} \sqcup \DP_4^{\rm tn,D_4},
\end{equation*}
with one component for each type of root system. Again, by mimicking Looijenga's argument as in Theorem \ref{theo:loo} we get the following.

\begin{Proposition}
 There are $W(D_5)$-equivariant isomorphisms
 \begin{equation*}
     \begin{array}{lcl}
     \DP_4^{\rm 2n,A_4} & \cong & (\ZZ/2\ZZ)^4 \setminus \bigsqcup_{W(D_5)/W(A_4)} T_{A_4} \\
     \DP_4^{\rm 2n,D_4} & \cong & (\ZZ/2\ZZ)^4 \setminus \bigsqcup_{W(D_5)/W(D_4)} T_{D_4} \\
     \DP_4^{\rm tn,A_4} & \cong & (\ZZ/2\ZZ)^4 \setminus \bigsqcup_{W(D_5)/W(A_4)} \PP(V_{A_4}) \\
     \DP_4^{\rm tn,D_4} & \cong & (\ZZ/2\ZZ)^4 \setminus \bigsqcup_{W(D_5)/W(D_4)} \PP(V_{D_4}).
      \end{array}
 \end{equation*}
\end{Proposition}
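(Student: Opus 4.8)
The plan is to reproduce the construction in the proof of Theorem~\ref{theo:loo} essentially verbatim, treating $\DP_4^{\rm 2n,A_4}$ in detail and indicating only the bookkeeping changes for the other three loci. Given $(X,A,p)\in\DP_4^{\rm 2n,A_4}$, write $A=A_1+A_2$ with the two smooth rational components meeting transversely at two points, one of which is $p$; by cases (1) and (3) of the classification preceding the statement the orthogonal complement $\langle A_1,A_2\rangle^\perp\subset K_X^\perp$ is a root lattice $L_{A_4}$ of type $A_4$. As in configuration (3) of Proposition~\ref{propanticanonical} we have $J(A)\cong\CC^*$, so restricting the map $r_A\colon K_X^\perp\to J(A)$ to this sublattice and composing with a chosen isomorphism $J(A)\cong\CC^*$ yields a character $\chi\in\Hom(L_{A_4},\CC^*)$. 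First I would verify, exactly as in the nodal $E_6$ case, that $\chi$ avoids every root hypertorus: if $\chi(\alpha)=1$ for a root $\alpha$ of $A_4$, then the six points blown up to form $X$ would fail to be in general position, contradicting the smoothness of $X$. Thus $\chi\in T_{A_4}$.

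Transitivity of the $W(D_5)$-action on the two-component $A_4$-type configurations, with stabiliser $W(A_4)$, produces the disjoint union indexed by $W(D_5)/W(A_4)$; the $D_4$-type loci are identical, using case (2) of the classification and the stabiliser $W(D_4)$. For the tacnodal loci $\DP_4^{\rm tn,A_4}$ and $\DP_4^{\rm tn,D_4}$ the two components meet tangentially, so by configuration (4) of Proposition~\ref{propanticanonical} one has $J(A)\cong\CC$ with the isomorphism canonical only up to $\CC^*$-scaling; consequently $r_A$ determines a point of the projectivised hyperplane-arrangement complement $\PP(V_{A_4})$ (resp. $\PP(V_{D_4})$), and the root-avoidance argument goes through unchanged.

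The real content, as in Theorem~\ref{theo:loo}, is the inverse map, which I would build as in the $\DP_3^{\rm 3n}$ construction. Starting from $\chi\in T_{A_4}$ avoiding all roots, fix an abstract curve $A$ of the prescribed configuration together with a group isomorphism $\phi\colon\CC^*\to J(A)$; using $\Pic^1(A)\cong A_{\rm sm}$ one places points on the smooth locus by translating a base point by $\phi(\chi(v))$ for suitable lattice elements $v$, applies a linear system determined by $\chi$ to map $A$ to $\PP^2$, and blows up the images. Reconstructing $(X,A,p)$ then reduces to checking that the resulting six points are in general position, which is precisely the translation of the conditions $\chi(\alpha)\neq 1$.

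The hard part will be the accounting of the quotient by $(\ZZ/2\ZZ)^4$. In the cubic case the general surface had trivial automorphism group, so one divided separately by the inversion $t\mapsto t^{-1}$ of $\CC^*$ and, for the two-nodal locus, by the swap of the two singular points of $A$ recorded in the hat notation. Here $\Aut(X)\cong(\ZZ/2\ZZ)^4$ acts nontrivially on the torus through the image described in Lemma~\ref{auts}(4), and I expect the crux to be verifying that this single quotient already subsumes these identifications --- in particular that for the two-nodal loci an automorphism of $X$ realises the swap of the two singular points --- so that the displayed isomorphisms hold without a separate hat. Once the quotient is matched, $W(D_5)$-equivariance is formal, since every step of the construction is natural with respect to change of geometric marking.
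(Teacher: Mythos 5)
Your proposal follows essentially the same route as the paper, which itself gives no details here beyond the single sentence ``by mimicking Looijenga's argument as in Theorem~\ref{theo:loo}'': the forward map via $r_A$ restricted to $\langle A_1,A_2\rangle^\perp$, root-avoidance from general position, the inverse construction from a character, and transitivity of $W(D_5)$ on configurations of a fixed type giving the coset-indexed disjoint unions. The one point you flag as the crux --- that the single $(\ZZ/2\ZZ)^4=\Aut(X)$ quotient absorbs both the inversion ambiguity in $J(A)\cong\CC^*$ and the swap of the two nodes, so no separate hat is needed --- is indeed the only genuinely new bookkeeping relative to Theorem~\ref{theo:loo}, and the paper likewise leaves it implicit, so your identification of it is correct and your plan matches the paper's.
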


We give the cohomology groups of the above moduli spaces in the second Appendix.

\begin{Remark}\label{rem:samecoh}
We would like to point out that the cohomologies of $\DP_4^{\rm c}$ and $\DP_4^{\rm{tn, A_4}}$ in Table~\ref{dp4table} are
exactly the same. This may look a bit curious but has a rather simple explanation. To compute the cohomology of $\DP_4^{\rm c}$ we first compute
the cohomology of the complement of the projectivised hyperplane arrangement associated to $D_5$ and then take invariants with respect to $(\ZZ/2\ZZ)^4$ in order to take the quotient. The quotient of of $W(D_5)$ by  $(\ZZ/2\ZZ)^4$ is
isomorphic to $W(A_4)$. To compute the cohomology of $\DP_4^{\rm tn, A_4}$ we
first compute
the cohomology of the complement of the projectivised hyperplane arrangement associated to $A_4$ and then induce up to $W(D_5)$ before taking $(\ZZ/2\ZZ)^4$-invariants. Thus, in light of Frobenius reciprocity, the
equality of the tables is not as remarkable as it would seem at first sight.
\end{Remark}

\subsection{Anticanonical curves with three components}
If an anticanonical curve $A$ consists of three components $A_1$, $A_2$
and $A_3$ these components must
have classes of the following three types
\begin{equation*}
    \begin{array}{lll}
         A_1 =  2L-E_1-\cdots-E_5, & A_2  =  L-E_i, & A_3=E_i,  \\
         A_1 = 2L-E_1-\cdots-E_5+E_i, & A_2 = L-E_i-E_j, & A_3 = E_i,   \\
         A_1 = L-E_i-E_j, & A_2 = L-E_k-E_l, & A_3 = L-E_m.
    \end{array}
\end{equation*}
Each of these cases gives a root system of type $A_3$ and $W(D_5)$ acts transitively on the set of anticanonical curves with three components.

\begin{Proposition}
 There are $W(D_5)$-equivariant isomorphisms
 \begin{equation*}
     \begin{array}{lcl}
     \DP_4^{\rm 3n} & \cong & (\ZZ/2\ZZ)^4 \setminus \bigsqcup_{W(D_5)/W(A_3)} T_{A_3} \\
      \DP_4^{\rm tn}  & \cong & (\ZZ/2\ZZ)^4 \setminus \bigsqcup_{W(D_5)/W(A_3)}\PP(V_{A_3}).
      \end{array}
 \end{equation*}
\end{Proposition}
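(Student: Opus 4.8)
The plan is to reproduce, for three-component curves, the construction carried out in the proof of Theorem~\ref{theo:loo}, splitting according to the two three-component configurations of Proposition~\ref{propanticanonical}. Fix a degree four Del Pezzo surface $S$ with a geometric marking $L,E_1,\dots,E_5$ and let $A=A_1+A_2+A_3$ be a three-component anticanonical curve together with a chosen singular point $p$. By Proposition~\ref{propanticanonical} the triangle configuration~(\ref{trianglecase}) has $\Jac(A)\cong\GG_m$, giving the locus $\DP_4^{\rm 3n}$, whereas the triple-point configuration~(\ref{Eckcase}) has $\Jac(A)\cong\CC$, giving the locus $\DP_4^{\rm tp}$. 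Exactly as in the cubic case, the restriction homomorphism $r_A\colon K_S^\perp\to\Jac(A)$ is trivial on each component class $A_i$, so all of the information is recorded on the orthogonal complement $\langle A_1,A_2,A_3\rangle^\perp$, which by the computation preceding the statement is the root lattice $L_{A_3}$. Composing $r_A$ with one of the two isomorphisms $\Jac(A)\cong\GG_m$ then produces a character of $L_{A_3}$, that is a point of $T_{A_3}=\Hom(L_{A_3},\CC^*)$, in the triangle case; composing with a scaling-ambiguous isomorphism $\Jac(A)\cong\CC$ produces a point of $V_{A_3}$ defined up to $\CC^*$, i.e. a point of $\PP(V_{A_3})$, in the triple-point case.

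First I would check that these images avoid the relevant arrangement. As in Theorem~\ref{theo:loo}, if $\alpha\in L_{A_3}$ is a root then $r_A(\alpha)\neq1$ in the multiplicative case (resp. $r_A(\alpha)\neq0$ in the additive case), since a root restricting trivially would force the five blown-up points into a degenerate configuration and so contradict general position; hence the image lands in the complement $T_{A_3}$ (resp. $\PP(V_{A_3})$) of the toric (resp. projectivised hyperplane) $A_3$-arrangement. To assemble the disjoint union I would use the transitivity of the $W(D_5)$-action on three-component anticanonical curves noted just before the statement: these configurations form a single $W(D_5)$-orbit whose stabiliser is the Weyl group $W(A_3)$ of the orthogonal root system, which identifies the index set with the coset space $W(D_5)/W(A_3)$. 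Finally, a general degree four Del Pezzo surface has automorphism group $\Aut(S)\cong(\ZZ/2\ZZ)^4$ by Lemma~\ref{auts}(4); this group acts on $K_S^\perp$, hence permutes the factors of the disjoint union and acts within each of them, and it acts trivially on the moduli space, so---together with the sign ambiguity from the choice of isomorphism $\Jac(A)\cong\GG_m$, absorbed exactly as in the earlier propositions of this section---the forward map descends to the quotient $(\ZZ/2\ZZ)^4\setminus\bigsqcup_{W(D_5)/W(A_3)}T_{A_3}$, and likewise with $\PP(V_{A_3})$ in the triple-point case.

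The substance of the argument, exactly as in the proof of Theorem~\ref{theo:loo}, is the construction of an inverse, and this is the step I expect to be the main obstacle. Starting from a character $\chi\in T_{A_3}$ in the complement, one builds an abstract triangle curve $A=A_1+A_2+A_3$, fixes an isomorphism $\phi\colon\GG_m\xrightarrow{\sim}\Jac(A)$ and the Riemann--Roch identification $\psi\colon\Pic^1(A)\xrightarrow{\sim}A_{\rm sm}$, and places marked points on $A_{\rm sm}$ by successively twisting a base point by $\phi(\chi(\cdot))$ along the simple roots $E_a-E_b$, just as in the cubic construction. One then uses the linear system attached to the conic component to map $A$ into $\PP^2$---sending $A_1=2L-E_1-\cdots-E_5$ to the conic through the five points, $A_2$ to a line and contracting $A_3$---and takes $S$ to be the blow-up of $\PP^2$ at these five points, anticanonically a smooth intersection of two quadrics in $\PP^4$. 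The technical heart is to verify that the five points are in general position, i.e. pairwise distinct with no three collinear: since they all lie on the conic $h(A_1)$ no three can be collinear and that condition is automatic, while the distinctness conditions that are not automatic are precisely the ones attached to the roots $E_a-E_b$ of the $A_3$-subsystem and hold because $\chi(E_a-E_b)\neq1$. This mirrors the six-point verification at the end of the proof of Theorem~\ref{theo:loo}; note that, in contrast to the cubic case, the conic component is distinguished from the two line components, so no residual $\ZZ/3\ZZ$ acts and none is quotiented out. The triple-point case is identical after replacing $\GG_m$ by $\CC$ and $T_{A_3}$ by $\PP(V_{A_3})$, and since every step is natural in the marking the resulting bijection is $W(D_5)$-equivariant, giving the two stated isomorphisms.
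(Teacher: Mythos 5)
Your overall strategy is exactly the one the paper intends: the text offers no proof of this proposition beyond computing $\langle A_1,A_2,A_3\rangle^\perp\cong L_{A_3}$, asserting transitivity, and pointing back to the proof of Theorem~\ref{theo:loo}, and your forward map via $r_A$, the root-avoidance check, and the reconstruction of the five points from a character are faithful adaptations of that argument. (You are also right to read the second line as $\DP_4^{\rm tp}$; the ``$\rm tn$'' in the statement is a typo, since $\DP_4^{\rm tn}$ was already treated in the two-component subsection.) Two smaller imprecisions: the reason one restricts to $\langle A_1,A_2,A_3\rangle^\perp$ is not that $r_A$ is ``trivial on each component class'' but that only classes of zero multidegree on the components restrict into $\Pic^0(A)=J(A)$; and the claim that the inversion ambiguity in $J(A)\cong\GG_m$ is ``absorbed exactly as before'' deserves a sentence, since $-\mathrm{id}$ is not an element of $W(A_3)$.

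The genuine gap is your treatment of the marked singular point on the $\DP_4^{\rm 3n}$ side. The character $r_A$ depends only on $(S,A)$, not on which of the three nodes of the triangle $A=C+\ell_1+\ell_2$ is marked, so the three points of $\DP_4^{\rm 3n}$ lying over a fixed $(S,A)$ can only be separated by the index of the disjoint union. Your remark that the conic component is distinguished, ``so no residual $\ZZ/3\ZZ$ acts,'' does not settle this: distinguishing the conic only cuts the component symmetry from $S_3$ to $S_2$, and indeed $|W(D_5)/W(A_3)|=80$ enumerates the $40$ unordered configurations of a conic and two lines together with an ordering of the two lines, whereas there are $40\cdot 3=120$ pairs (configuration, node). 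A node of the form $C\cap\ell_i$ does single out a line and hence a coset, but the node $\ell_1\cap\ell_2$ does not, so the map you describe is either not well defined or fails to be injective on that stratum; symmetrically, your inverse construction never specifies which node of the reconstructed triangle is to be marked. This is precisely the phenomenon the paper confronts in the cubic case by replacing $\DP_3^{\rm 3n}$ with the quotient $\widehat{\DP_3^{\rm 3n}}$ by the group permuting the singular points, and your proof needs the analogous discussion here: either an explanation of how the marked node is encoded in the coset and character data, or an explicit identification of which quotient of $\DP_4^{\rm 3n}$ the arrangement complement actually computes. (No such issue arises for $\DP_4^{\rm tp}$, which has a unique singular point.)
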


\subsection{Anticanonical curves with four components}
An anticanonical curve $A$ consisting of four components $A_1$, $A_2$, $A_3$ and $A_4$ is
\begin{equation*}
    \begin{array}{llll}
         A_1 = 2L-E_1-\cdots-E_5,  A_2 = L-E_i-E_j, A_3 = E_i  A_4=E_j\text{ or }\\
         A_1 = L-E_i-E_j, A_2 = L-E_k-E_l, A_3 = L-E_k-E_m, A_4 = E_k.
    \end{array}
\end{equation*}
Both cases give a root system of type $A_2$ and $W(D_5)$ acts transitively on
the set of anticanonical curves with four components.
\begin{Proposition}
 There is a $W(D_5)$-equivariant isomorphism
 \begin{equation*}
     \begin{array}{lcl}
     \DP_4^{\rm 4n} & \cong & (\ZZ/2\ZZ)^4 \setminus \bigsqcup_{W(D_5)/W(A_2)} T_{A_2}
      \end{array}
 \end{equation*}
\end{Proposition}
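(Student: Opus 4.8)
The plan is to run the degree-4 analogue of the construction in Theorem \ref{theo:loo}, following Looijenga \cite[1.7--1.15]{looijenga}, with the relevant root subsystem now being $A_2$. First I would build the forward map. Given $(X,A,p)\in\DP_4^{\rm 4n}$, Proposition \ref{propanticanonical}(\ref{squarecase}) identifies $A$ as a cycle of four smooth rational curves $A_1,\dots,A_4$ with Jacobian $J(A)=\Pic^0(A)\cong\GG_m\cong\CC^*$, and the classes listed just before the statement show that $\langle A_1,\dots,A_4\rangle^\perp\subset K_X^\perp=L_{D_5}$ is a copy of the $A_2$ root lattice. Restricting the homomorphism $r_A\colon K_X^\perp\to J(A)$ to this orthogonal complement and composing with an isomorphism $J(A)\cong\CC^*$ yields an element of $\Hom(L_{A_2},\CC^*)$. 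As in the irreducible nodal case, general position of the five points blown up to produce $X$ forces $r_A(\alpha)\neq 1$ for every root $\alpha$ of $D_5$, in particular for the roots of the subsystem $A_2$, so the image avoids the corresponding hypertori and lands in the toric-arrangement complement $T_{A_2}$.

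Next I would account for the indexing and the quotient. Since $W(D_5)$ acts transitively on the four-component anticanonical curves with stabiliser $W(A_2)$, the set of square configurations is indexed by the cosets $W(D_5)/W(A_2)$, which produces the disjoint union. By Lemma \ref{auts}(4) a general degree-4 Del Pezzo surface has automorphism group $(\ZZ/2\ZZ)^4$, acting on $L_{D_5}$ through the reflections $r_{\alpha_1}\circ r_{\alpha_i}$ and hence on each torus $T_{A_2}$; since the construction of $r_A$ depends on the marking only up to this action, the target must be quotiented by $(\ZZ/2\ZZ)^4$, exactly as in the preceding propositions of this section. I would then verify the two book-keeping points that make the answer clean: that the inversion ambiguity in the choice of isomorphism $J(A)\cong\CC^*$, which in degree $3$ forced a separate $\ZZ/2\ZZ$-quotient, is here already realised inside the $(\ZZ/2\ZZ)^4$-action, and that the four nodes of $A$, being distinguished by the distinct Picard classes of the components meeting at them, contribute no further symmetry; this is why, in contrast to $\widehat{\DP_3^{\rm 3n}}$, no additional quotient appears.

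The substantive step is the construction of the inverse, carried out parallel to the reconstruction for $\DP_3^{\rm 3n}$ in the proof of Theorem \ref{theo:loo}. Starting from a character $\chi\in T_{A_2}$, I would form the abstract square $A=A_1+\dots+A_4$, fix isomorphisms $\phi\colon\CC^*\to J(A)$ and $\psi\colon\Pic^1(A)\to A_{\rm{sm}}$, and use the values of $\chi$ on the appropriate root and coordinate classes to place the five marked points on the smooth loci of the components. The resulting linear system maps $A$ to $\PP^2$, realising the component of class $2L-E_1-\cdots-E_5$ as a conic, a second component as a line and contracting the two exceptional components; blowing up the five image points recovers a marked degree-4 Del Pezzo surface carrying the prescribed square anticanonical section together with its marked node, and one checks that this assignment is inverse to $r_A$ and $W(D_5)$-equivariant.

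The main obstacle I anticipate is not the local geometry but the equivariance and general-position bookkeeping: one must show that the conditions $\chi(\alpha)\neq 1$ for $\alpha\in A_2$ translate precisely into the five reconstructed points being distinct with no three collinear, and that the forward and inverse maps intertwine the $W(D_5)$-actions and descend compatibly to the $(\ZZ/2\ZZ)^4$-quotients. Confirming that the $(\ZZ/2\ZZ)^4$-action simultaneously absorbs the $\CC^*$-inversion and the permutations of the four nodes, so that the stated isomorphism holds on the nose with no extra quotient, is the delicate part, and I would check it by comparing the lattice automorphisms $r_{\alpha_1}\circ r_{\alpha_i}$ with the symmetries of the square configuration.
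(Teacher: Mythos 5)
Your outline follows the same route as the paper, which for this proposition offers no argument beyond identifying the component classes, noting that $\langle A_1,\dots,A_4\rangle^\perp$ is of type $A_2$ and that $W(D_5)$ acts transitively, and invoking ``mimicking Looijenga's argument as in Theorem \ref{theo:loo}''; your forward map via $r_A$, the coset indexing, the $(\ZZ/2\ZZ)^4$-quotient by the generic automorphism group, and the reconstruction of the surface from a character are exactly that argument spelled out.

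There is, however, one piece of your bookkeeping that does not hold up: the claim that no extra quotient appears \emph{because} the four nodes are distinguished by the Picard classes of the components meeting at them. That property holds equally for the three nodes of a tritangent triangle in the cubic case (each node lies on a distinct pair of lines with distinct classes), and yet there the paper must pass to $\widehat{\DP_3^{\rm 3n}}$ and quotient by $\ZZ/3\ZZ$. The map $r_A$ restricted to $\langle A_1,\dots,A_4\rangle^\perp$ never sees the marked node, so distinguishability of the nodes inside $\Pic(X)$ is not the point. The actual reason the square case needs no hat is visible in the index set: $|W(D_5)/W(A_2)|=320$ is eight times the number ($40$) of square configurations, because the setwise stabiliser of a square in $W(D_5)$ (of order $48$) surjects onto the dihedral group of order $8$ with kernel the pointwise stabiliser $W(A_2)$; the disjoint union is therefore indexed by \emph{framed} squares (marked node plus orientation), and the choice of node is recorded by which copy of $T_{A_2}$ the point lands in, the residual factor of $2$ absorbing the inversion ambiguity in $J(A)\cong\CC^*$. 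By contrast, for a tritangent trio the setwise and pointwise stabilisers in $W(E_6)$ coincide (both equal $W(F_4)$, of order $51840/45$), so no element of the Weyl group permutes the three components and the cyclic symmetry must be imposed externally. You flag this as the delicate point you would check, which is right, but the heuristic you propose to check it with would lead you astray; the verification should instead compare setwise and pointwise stabilisers of the configuration inside the Weyl group.
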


\appendix
\section{Tables for cubic Del Pezzo surfaces}
\begin{scriptsize}
 \begin{table}[h]
 \begin{equation*}
 \begin{array}{|r|rrrrrrrrrrrrr|}
 \hline
 &&&&&&&&&&&&&\\[-1em]
 \, & \phi_{1}^0 & \phi_{1}^{36} & \phi_{6}^{25} & \phi_{6}^1 & \phi_{10}^{9} & \phi_{15}^{17} & \phi_{15}^{16} & \phi_{15}^{5} & \phi_{15}^{4} & \phi_{20}^{20} & \phi_{20}^2 & \phi_{20}^{10} & \phi_{24}^{12} \\
 &&&&&&&&&&&&&\\[-1em]
 \hline
 &&&&&&&&&&&&&\\[-1em]
\HH^0 & 1 & 0 & 0 & 0 & 0 & 0 & 0 & 0 & 0 & 0 & 0 & 0 & 0 \\
\HH^1 & 1 & 0 & 0 & 0 & 0 & 0 & 0 & 0 & 1 & 0 & 1 & 0 & 0 \\
\HH^2 & 0 & 0 & 0 & 0 & 0 & 0 & 0 & 1 & 2 & 0 & 2 & 0 & 0 \\
\HH^3 & 0 & 0 & 0 & 1 & 1 & 0 & 0 & 4 & 2 & 0 & 3 & 1 & 1 \\
\HH^4 & 0 & 0 & 0 & 3 & 6 & 3 & 1 & 8 & 4 & 1 & 8 & 5 & 4 \\
\HH^5 & 1 & 0 & 2 & 5 & 7 & 9 & 8 & 11 & 11 & 9 & 17 & 13 & 13 \\
\HH^6 & 2 & 1 & 2 & 3 & 2 & 8 & 14 & 8 & 15 & 13 & 16 & 15 & 19\\
\hline
 &&&&&&&&&&&&&\\[-1em]
\, & \phi_{24}^{6} & \phi_{30}^{15} & \phi_{30}^{3} & \phi_{60}^{11} & \phi_{60}^{5} & \phi_{60}^{8} & \phi_{64}^{13} & \phi_{64}^{4} & \phi_{80}^{7} & \phi_{81}^{6} & \phi_{81}^{10} & \phi_{90}^{8} & \, \\
 &&&&&&&&&&&&&\\[-1em]
\hline
 &&&&&&&&&&&&&\\[-1em]
\HH^0 & 0 & 0 & 0 & 0 & 0 & 0 & 0 & 0 & 0 & 0 & 0 & 0 & \, \\
\HH^1 & 0 & 0 & 0 & 0 & 0 & 0 & 0 & 0 & 0 & 0 & 0 & 0 & \, \\
\HH^2 & 0 & 0 & 1 & 0 & 1 & 1 & 0 & 2 & 0 & 2 & 0 & 0 & \, \\
\HH^3 & 2 & 0 & 6 & 2 & 6 & 4 & 1 & 9 & 7 & 9 & 3 & 7 & \, \\
\HH^4 & 8 & 7 & 17 & 14 & 23 & 15 & 13 & 26 & 31 & 27 & 20 & 31 & \, \\
\HH^5 & 18 & 17 & 23 & 37 & 45 & 40 & 38 & 48 & 55 & 56 & 52 & 61 & \,\\
\HH^6 & 21 & 11 & 12 & 32 & 34 & 44 & 36 & 39 & 37 & 54 & 53 & 49 & \, \\
 \hline
\end{array}
 \end{equation*}
 \caption{$\HH^i(\DP_3^{\rm n})$ as a
 representation of $W(E_6)$.}
 \label{nodecohtable}
 \end{table}
\end{scriptsize}

\begin{scriptsize}
 \begin{table}[h]
 \begin{equation*}
 \begin{array}{|r|rrrrrrrrrrrrr|}
 \hline
 &&&&&&&&&&&&&\\[-1em]
 \, & \phi_{1}^0 & \phi_{1}^{36} & \phi_{6}^{25} & \phi_{6}^1 & \phi_{10}^{9} & \phi_{15}^{17} & \phi_{15}^{16} & \phi_{15}^{5} & \phi_{15}^{4} & \phi_{20}^{20} & \phi_{20}^2 & \phi_{20}^{10} & \phi_{24}^{12} \\
 &&&&&&&&&&&&&\\[-1em]
 \hline
 &&&&&&&&&&&&&\\[-1em]
\HH^0 & 1 & 0 & 0 & 0 & 0 & 0 & 0 & 0 & 0 & 0 & 0 & 0 & 0 \\
\HH^1 & 0 & 0 & 0 & 0 & 0 & 0 & 0 & 0 & 1 & 0 & 1 & 0 & 0 \\
\HH^2 & 0 & 0 & 0 & 0 & 0 & 0 & 0 & 0 & 1 & 0 & 1 & 0 & 0 \\
\HH^3 & 0 & 0 & 0 & 1 & 1 & 0 & 0 & 3 & 1 & 0 & 2 & 1 & 1 \\
\HH^4 & 0 & 0 & 0 & 2 & 3 & 2 & 1 & 4 & 2 & 1 & 5 & 3 & 3 \\
\HH^5 & 0 & 0 & 1 & 2 & 1 & 3 & 4 & 3 & 5 & 4 & 6 & 6 & 6 \\
\hline
 &&&&&&&&&&&&&\\[-1em]
\, & \phi_{24}^{6} & \phi_{30}^{15} & \phi_{30}^{3} & \phi_{60}^{11} & \phi_{60}^{5} & \phi_{60}^{8} & \phi_{64}^{13} & \phi_{64}^{4} & \phi_{80}^{7} & \phi_{81}^{6} & \phi_{81}^{10} & \phi_{90}^{8} & \, \\
 &&&&&&&&&&&&&\\[-1em]
\hline
 &&&&&&&&&&&&&\\[-1em]
\HH^0 & 0 & 0 & 0 & 0  & 0  & 0  & 0  & 0  & 0  & 0  & 0  & 0  & \,\\
\HH^1 & 0 & 0 & 0 & 0  & 0  & 0  & 0  & 0  & 0  & 0  & 0  & 0  & \,\\
\HH^2 & 0 & 0 & 1 & 0  & 1  & 1  & 0  & 2  & 0  & 2  & 0  & 0  & \,\\
\HH^3 & 2 & 0 & 4 & 2  & 5  & 3  & 1  & 6  & 6  & 6  & 3  & 6  & \,\\
\HH^4 & 5 & 5 & 9 & 10 & 14 & 10 & 9  & 15 & 17 & 16 & 13 & 19 & \,\\
\HH^5 & 7 & 5 & 6 & 13 & 15 & 16 & 14 & 17 & 17 & 21 & 20 & 20 & \,\\
\hline
\end{array}
 \end{equation*}
 \caption{$\HH^i(\DP_3^{\rm c})$ as a
 representation of $W(E_6)$. See also \cite{fleischmannjaniszczak_hyp}.}
 \label{cuspcohtable}
 \end{table}
\end{scriptsize}

\begin{scriptsize}
 \begin{table}[h]
 \begin{equation*}
 \begin{array}{|r|rrrrrrrrrrrrr|}
 \hline
 &&&&&&&&&&&&&\\[-1em]
 \, & \phi_{1}^0 & \phi_{1}^{36} & \phi_{6}^{25} & \phi_{6}^1 & \phi_{10}^{9} & \phi_{15}^{17} & \phi_{15}^{16} & \phi_{15}^{5} & \phi_{15}^{4} & \phi_{20}^{20} & \phi_{20}^2 & \phi_{20}^{10} & \phi_{24}^{12}\\
 &&&&&&&&&&&&&\\[-1em]
 \hline
 &&&&&&&&&&&&&\\[-1em]
\HH^0 & 1 & 0 & 0 & 1 & 0 & 0 & 0 & 0 & 0 & 0 & 1 & 0 & 0 \\
\HH^1 & 1 & 0 & 0 & 1 & 0 & 0 & 0 & 0 & 2 & 0 & 3 & 0 & 0 \\
\HH^2 & 0 & 0 & 0 & 0 & 0 & 0 & 1 & 2 & 5 & 0 & 5 & 2 & 0 \\
\HH^3 & 0 & 0 & 0 & 2 & 2 & 2 & 3 & 8 & 7 & 2 & 9 & 8 & 1 \\
\HH^4 & 0 & 0 & 3 & 6 & 9 & 8 & 6 & 13 & 8 & 9 & 14 & 12 & 1 \\
\HH^5 & 0 & 0 & 4 & 5 & 9 & 8 & 4 & 9 & 4 & 8 & 9 & 8 & 0  \\
\hline
 &&&&&&&&&&&&&\\[-1em]
\,  & \phi_{24}^{6} & \phi_{30}^{15} & \phi_{30}^{3} & \phi_{60}^{11} & \phi_{60}^{5} & \phi_{60}^{8} & \phi_{64}^{13} & \phi_{64}^{4} & \phi_{80}^{7} & \phi_{81}^{6} & \phi_{81}^{10} & \phi_{90}^{8} & \, \\
 &&&&&&&&&&&&&\\[-1em]
\hline
 &&&&&&&&&&&&&\\[-1em]
\HH^0 & 0 & 0 & 0 & 0 & 0 & 0 & 0 & 0 & 0 & 0 & 0 & 0 & \, \\
\HH^1 & 1 & 0 & 1 & 0 & 2 & 1 & 0 & 2 & 0 & 1 & 0 & 0 & \, \\
\HH^2 & 2 & 0 & 4 & 1 & 6 & 3 & 0 & 8 & 1 & 8 & 2 & 3 & \, \\
\HH^3 & 2 & 4 & 12 & 8 & 14 & 6 & 7 & 20& 10 & 19 & 9 & 16 & \, \\
\HH^4 & 1 & 19 & 26 & 26 & 30 & 11 & 25 & 34 & 33 & 28 & 22 & 36 & \, \\
\HH^5 & 0 & 21 & 23 & 26 & 27 & 9 & 25 & 27 & 36 & 22 & 21 & 35 & \, \\
\hline
\end{array}
 \end{equation*}
 \caption{$\HH^i(\DP_3^{\rm 2n})$ as
 a representation of $W(E_6)$.}
 \label{clcohtable}
 \end{table}
\end{scriptsize}

\begin{scriptsize}
 \begin{table}[h]
 \begin{equation*}
 \begin{array}{|r|rrrrrrrrrrrrr|}
 \hline
 &&&&&&&&&&&&&\\[-1em]
 \, & \phi_{1}^0 & \phi_{1}^{36} & \phi_{6}^{25} & \phi_{6}^1 & \phi_{10}^{9} & \phi_{15}^{17} & \phi_{15}^{16} & \phi_{15}^{5} & \phi_{15}^{4} & \phi_{20}^{20} & \phi_{20}^2 & \phi_{20}^{10} & \phi_{24}^{12} \\
 &&&&&&&&&&&&&\\[-1em]
 \hline
 &&&&&&&&&&&&&\\[-1em]
\HH^0 & 1 & 0 & 0 & 1 & 0 & 0 & 0 & 0 & 0 & 0 & 1 & 0 & 0 \\
\HH^1 & 0 & 0 & 0 & 0 & 0 & 0 & 0 & 0 & 2 & 0 & 2 & 0 & 0 \\
\HH^2 & 0 & 0 & 0 & 0 & 0 & 0 & 1 & 1 & 3 & 0 & 3 & 1 & 1 \\
\HH^3 & 0 & 0 & 0 & 2 & 2 & 2 & 2 & 5 & 3 & 2 & 5 & 5 & 5 \\
\HH^4 & 0 & 0 & 1 & 2 & 3 & 3 & 2 & 4 & 2 & 3 & 4 & 4 & 4 \\
\hline
 &&&&&&&&&&&&&\\[-1em]
\, & \phi_{24}^{6} & \phi_{30}^{15} & \phi_{30}^{3} & \phi_{60}^{11} & \phi_{60}^{5} & \phi_{60}^{8} & \phi_{64}^{13} & \phi_{64}^{4} & \phi_{80}^{7} & \phi_{81}^{6} & \phi_{81}^{10} & \phi_{90}^{8} & \, \\
 &&&&&&&&&&&&&\\[-1em]
\hline
 &&&&&&&&&&&&&\\[-1em]
\HH^0 & 0 & 0 & 0 & 0  & 0  & 0  & 0  & 0  & 0  & 0  & 0  & 0  & \, \\
\HH^1 & 1 & 0 & 1 & 0  & 2  & 1  & 0  & 2  & 0  & 1  & 0  & 0  & \, \\
\HH^2 & 3 & 0 & 3 & 2  & 6  & 5  & 1  & 7  & 4  & 9  & 5  & 5  & \, \\
\HH^3 & 6 & 3 & 7 & 10 & 13 & 13 & 10 & 16 & 16 & 18 & 15 & 19 & \, \\
\HH^4 & 4 & 7 & 9 & 13 & 14 & 11 & 13 & 15 & 20 & 17 & 16 & 21 & \, \\
\hline
\end{array}
 \end{equation*}
 \caption{$\HH^i(\DP_3^{\rm tn})$ as
 a representation of $W(E_6)$.}
 \label{ctlcohtable}
 \end{table}
\end{scriptsize}


\begin{scriptsize}
 \begin{table}[h]
 \begin{equation*}
 \begin{array}{|r|rrrrrrrrrrrrr|}
 \hline
 &&&&&&&&&&&&&\\[-1em]
 \, & \phi_{1}^0 & \phi_{1}^{36} & \phi_{6}^{25} & \phi_{6}^1 & \phi_{10}^{9} & \phi_{15}^{17} & \phi_{15}^{16} & \phi_{15}^{5} & \phi_{15}^{4} & \phi_{20}^{20} & \phi_{20}^2 & \phi_{20}^{10} & \phi_{24}^{12} \\
&&&&&&&&&&&&&\\[-1em]
\hline
&&&&&&&&&&&&&\\[-1em]
\HH^0 & 1 & 0 & 0 & 0 & 0 & 0 & 0 & 0 & 0 & 0 & 1 & 0 & 0 \\
\HH^1 & 2 & 0 & 0 & 1 & 0 & 0 & 1 & 0 & 3 & 0 & 5 & 0 & 0 \\
\HH^2 & 2 & 0 & 0 & 3 & 0 & 1 & 7 & 1 & 12 & 4 & 14 & 3 & 5 \\
\HH^3 & 3 & 2 & 2 & 5 & 1 & 9 & 21 & 8 & 26 & 19 & 28 & 16 & 24 \\
\HH^4 & 3 & 3 & 4 & 5 & 3 & 15 & 26 & 14 & 28 & 27 & 30 & 25 & 35 \\
\hline
&&&&&&&&&&&&&\\[-1em]
\, & \phi_{24}^{6} & \phi_{30}^{15} & \phi_{30}^{3} & \phi_{60}^{11} & \phi_{60}^{5} & \phi_{60}^{8} & \phi_{64}^{13} & \phi_{64}^{4} & \phi_{80}^{7} & \phi_{81}^{6} & \phi_{81}^{10} & \phi_{90}^{8} & \, \\
&&&&&&&&&&&&&\\[-1em]
\hline
&&&&&&&&&&&&&\\[-1em]
\HH^0 & 1 & 0 & 0 & 0 & 0 & 0 & 0 & 0 & 0 & 0 & 0 & 0 & \, \\
\HH^1 & 4 & 0 & 0 & 2 & 0 & 3 & 0 & 3 & 0 & 2 & 2 & 0 & \, \\
\HH^2 & 12 & 0 & 3 & 15 & 7 & 21 & 7 & 18 & 6 & 20 & 17 & 8  & \, \\
\HH^3 & 32 & 7 & 13 & 49 & 37 & 65 & 40 & 53 & 36 & 70 & 67 & 51 & \, \\
\HH^4 & 39 & 17 & 20 & 64 & 58 & 83 & 65 & 70 & 62 & 97 & 97 & 85 & \, \\
\hline
\end{array}
\end{equation*}
\caption{$\HH^i\left( \bigsqcup_{(f_1,f_2,f_3)} (\ZZ/2\ZZ)\backslash T_{F_4}(f_1) \right)$ as a representation of $W(E_6)$.}
\label{tlcohtable}
\end{table}
\end{scriptsize}


\begin{scriptsize}
 \begin{table}[h]
 \begin{equation*}
 \begin{array}{|r|rrrrrrrrrrrrr|}
 \hline
 &&&&&&&&&&&&&\\[-1em]
 \, & \phi_{1}^0 & \phi_{1}^{36} & \phi_{6}^{25} & \phi_{6}^1 & \phi_{10}^{9} & \phi_{15}^{17} & \phi_{15}^{16} & \phi_{15}^{5} & \phi_{15}^{4} & \phi_{20}^{20} & \phi_{20}^2 & \phi_{20}^{10} & \phi_{24}^{12} \\
 &&&&&&&&&&&&&\\[-1em]
 \hline
 &&&&&&&&&&&&&\\[-1em]
\HH^0 & 1 & 0 & 0 & 0 & 0 & 0 & 0 & 0 & 0  & 0 & 1  & 0 & 0  \\
\HH^1 & 1 & 0 & 0 & 1 & 0 & 0 & 1 & 0 & 3  & 0 & 4  & 0 & 0  \\
\HH^2 & 1 & 0 & 0 & 2 & 0 & 1 & 5 & 0 & 8  & 3 & 9  & 3 & 3  \\
\HH^3 & 1 & 1 & 1 & 2 & 1 & 5 & 8 & 5 & 10 & 8 & 11 & 7 & 11 \\
\hline
 &&&&&&&&&&&&&\\[-1em]
\, & \phi_{24}^{6} & \phi_{30}^{15} & \phi_{30}^{3} & \phi_{60}^{11} & \phi_{60}^{5} & \phi_{60}^{8} & \phi_{64}^{13} & \phi_{64}^{4} & \phi_{80}^{7} & \phi_{81}^{6} & \phi_{81}^{10} & \phi_{90}^{8} & \, \\
 &&&&&&&&&&&&&\\[-1em]
\hline
 &&&&&&&&&&&&&\\[-1em]
\HH^0 & 1  & 0 & 0 & 0  & 0  & 0  & 0  & 0  & 0  & 0  & 0  & 0  & \, \\
\HH^1 & 3  & 0 & 0 & 0  & 2  & 3  & 0  & 3  & 0  & 2  & 2  & 0  & \, \\
\HH^2 & 8  & 0 & 3 & 5  & 12 & 16 & 6  & 14 & 6  & 15 & 13 & 7  & \, \\
\HH^3 & 14 & 4 & 7 & 18 & 23 & 27 & 18 & 23 & 18 & 31 & 30 & 26 & \, \\
\hline
\end{array}
 \end{equation*}
 \caption{$\HH^i(\DP_3^{\rm tp})$ as
 a representation of $W(E_6)$.}
 \label{Ecohtable}
 \end{table}
\end{scriptsize}

\clearpage
\section{Tables for quartic Del Pezzo surfaces}

\begin{scriptsize}
\begin{table}[h]
\begin{equation*}
    \begin{array}{|l|rrrrrrr|}
    \hline
 &&&&&&&\\[-1em]
    \, & s_5 & s_{1^5} & s_{4,1} & s_{2,1^3} & s_{3,2} & s_{2^2,1} & s_{3,1^2} \\
 &&&&&&&\\[-1em]
    \hline
 &&&&&&&\\[-1em]
\HH^i(\DP_4^{\rm n}) & 1 & 0 & 0 & 0 & 0 & 0 & 0 \\
 & 1 & 0 & 0 & 1 & 0 & 1 & 0 \\
 & 0 & 0 & 0 & 2 & 1 & 2 & 2 \\
 & 0 & 0 & 1 & 3 & 2 & 3 & 4 \\
 & 1 & 0 & 3 & 5 & 4 & 5 & 6 \\
 & 2 & 1 & 4 & 5 & 6 & 6 & 6 \\
    \hline
    \hline
 &&&&&&&\\[-1em]
\HH^i(\DP_4^{\rm c}) & 1 & 0 & 0 & 0 & 0 & 0 & 0 \\
 & 0 & 0 & 0 & 1 & 1 & 0 & 0 \\
 & 0 & 0 & 0 & 1 & 1 & 1 & 2 \\
 & 0 & 0 & 1 & 1 & 1 & 1 & 1 \\
    \hline
    \hline
 &&&&&&&\\[-1em]
\HH^i(\DP_4^{\rm 2n, A_4}) & 1 & 0 & 0 & 0 & 0 & 0 & 0 \\
 & 1 & 0 & 0 & 2 & 1 & 0 & 0 \\
 & 1 & 0 & 0 & 4 & 4 & 2 & 4 \\
 & 1 & 0 & 4 & 6 & 7 & 6 & 8 \\
 & 1 & 1 & 4 & 4 & 5 & 5 & 6 \\
    \hline
    \hline
 &&&&&&&\\[-1em]
\HH^i(\DP_4^{\rm tn, A_4}) & 1 & 0 & 0 & 0 & 0 & 0 & 0 \\
 & 0 & 0 & 0 & 1 & 1 & 0 & 0 \\
 & 0 & 0 & 0 & 1 & 1 & 1 & 2 \\
 & 0 & 0 & 1 & 1 & 1 & 1 & 1  \\
    \hline
    \hline
 &&&&&&&\\[-1em]
\HH^i(\DP_4^{\rm 2n, D_4}) & 1 & 0 & 0 & 1 & 0 & 0 & 0 \\
 & 1 & 0 & 0 & 2 & 2 & 1 & 1 \\
 & 0 & 0 & 1 & 3 & 4 & 2 & 4 \\
 & 1 & 0 & 3 & 5 & 5 & 4 & 7 \\
 & 2 & 1 & 4 & 5 & 6 & 6 & 6 \\
    \hline
    \hline
 &&&&&&&\\[-1em]
\HH^i(\DP_4^{\rm tn, D_4}) & 1 & 0 & 0 & 1 & 0 & 0 & 0 \\
 & 0 & 0 & 0 & 1 & 2 & 1 & 1 \\
 & 0 & 0 & 1 & 2 & 2 & 1 & 3 \\
 & 1 & 0 & 1 & 2 & 2 & 2 & 2 \\
    \hline
    \hline
 &&&&&&&\\[-1em]
\HH^i(\DP_4^{\rm 3n}) & 1 & 0 & 0 & 1 & 0 & 0 & 0 \\
 & 1 & 0 & 0 & 3 & 3 & 1 & 2 \\
 & 1 & 0 & 3 & 5 & 6 & 5 & 7 \\
 & 1 & 1 & 4 & 4 & 5 & 5 & 6 \\
    \hline
    \hline
 &&&&&&&\\[-1em]
\HH^i(\DP_4^{\rm tp}) & 1 & 0 & 0 & 1 & 0 & 0 & 0 \\
 & 0 & 0 & 0 & 1 & 2 & 1 & 1 \\
 & 0 & 0 & 1 & 1 & 1 & 1 & 2 \\
    \hline
    \hline
 &&&&&&&\\[-1em]
\HH^i(\DP_4^{\rm 4n}) & 1 & 0 & 0 & 2 & 0 & 1 & 1 \\
 & 1 & 0 & 2 & 4 & 4 & 5 & 5 \\
 & 1 & 1 & 4 & 4 & 5 & 5 & 6 \\
    \hline
\end{array}
\end{equation*}
\caption{The cohomology groups $\HH^i(\DP_4^{\rm *},\QQ)$ as representations of
$S_5=W(D_5)/(\ZZ/2\ZZ)^4$. For every singularity type listed in the first column, the $i$-th row's values are the multiplicities of the representation $s_\alpha$ from the first row that appear in $\HH^i$.}
 \label{dp4table}
\end{table}
\end{scriptsize}

\bibliographystyle{alpha}

\renewcommand{\bibname}{References}


\newcommand{\etalchar}[1]{$^{#1}$}

\end{document}